\newcommand\RR{{\mathbb R}}
\def\d={\,:=\,}
\newcommand{\semdir}
{\rtimes}
\font\frakten=eufm10
\def \g{\mathfrak g}
\def \h{\mathfrak h}
\def \p{\mathfrak p}
\def \R{\mathbb R}
\def \eO {\mathcal O}
\def \b {\beta}
\def \l {\lambda}
\def \C {\mathbb C}
\def \n {\mathfrak n}
\def \zO {\Omega}
\def \S{\Sigma}
\def\O{\Omega}
\newtheorem{thm}{Theorem}
\newtheorem{lemma}[thm]{Lemma}
\newtheorem{cor}[thm]{Corollary}
\newtheorem{prop}[thm]{Proposition}
\newtheorem{Defn}[thm]{Definition}
\newtheorem{Ex}[thm]{Example}
\newtheorem{Rem}[thm]{Remark}
\newtheorem{Exs}[thm]{Examples}
\newtheorem{Rems}[thm]{Remarks}
\newtheorem{Defrem}[thm]{Definition and Remark}
\newtheorem{Remnt}[thm]{}
\newenvironment{defn}
 {\begin{Defn} \begin{rm}} {\end{rm} \hfill $\Box$ \end{Defn}}
\newenvironment{rem}
 {\begin{Rem} \begin{rm}} {\end{rm} \hfill $\Box$ \end{Rem}}
\newenvironment{prf} {{\bf Proof.}}{\hfill $\Box$}
\begin{document}

\author{B.~ Currey, H.~F{\"u}hr, K.~Taylor}


\title[Integrable wavelet transforms with abelian dilation groups]{Integrable wavelet transforms with abelian dilation groups}

\keywords{wavelet transforms; Calder\'on condition; integrable
representations; dual topology; coadjoint orbits;}
\subjclass[2000]{Primary 42C40}

\date{\today}

\begin{abstract}
We consider a class of semidirect products $G = \mathbb{R}^n \rtimes
H$, with $H$ a suitably chosen abelian matrix group. The choice of
$H$ ensures that there is a wavelet inversion formula, and we are
looking for criteria to decide under which conditions there exists a
wavelet such that the associated reproducing kernel is integrable.

It is well-known that the existence of integrable wavelet
coefficients is related to the question whether the unitary dual
of $G$ contains open compact sets. Our main general result reduces the
latter problem to that of identifying compact open sets in the
quotient space of all orbits of maximal dimension under the dual action of $H$ on $\mathbb{R}^n$.
This result is applied to study integrability for certain families
of dilation groups; in particular, we give a characterization
valid for connected abelian matrix groups acting in dimension three.
\end{abstract}

\maketitle


\section{Introduction}  \label{sect:intrt}

Before we describe the aims of this paper, we need to introduce some terminology and notations.
Let $H<{\rm GL}(n,\mathbb{R})$ denote a closed matrix group. The affine group
generated by $H$ and all translations is the semidirect product $G= \mathbb{R}^n \rtimes
H$. Elements of $G$ are denoted by pairs $(x,h) \in \mathbb{R}^n
\times H$, with group law $(x,h)(y,g) = (x+hy,hg)$. $G$ acts
unitarily on ${\rm L}^2(\mathbb{R}^n)$ via the {\em quasiregular
representation} $\pi$, defined by
\[
\pi(x,h) f(y) = |{\rm det}(h)|^{-1} f(h^{-1}(y-x))~.
\]
Given $\eta \in {\rm L}^2(G)$, the associated {\em continuous
wavelet transform} is the mapping
\[
V_\eta :{\rm L}^2(\mathbb{R}^n) \to C_b(G)~,~V_\eta f(x,h) = \langle
f,\pi(x,h) \eta \rangle~.
\] Here $C_b(G)$ denotes the space of bounded continuous functions
on $G$. Whenever $V_\eta$ is an isometry into ${\rm L}^2(G)$
(defined with respect to left Haar measure), we call $\eta$ {\em
admissible}. In turn, the group $H$ will be called {\em admissible}
if there exists an admissible vector for $\pi$.

Admissible vectors and groups have been studied extensively in the
past twenty years, with varying degrees of generality. A small and
subjective sample of the literature is
\cite{Mu,BeTa,FuMa,LWWW,Fu_LN,Fu_Calderon}.

The property we intend to investigate in this paper is {\em
integrability} of the quasi-regular representation. We call a representation
integrable if there is an admissible vector $\eta$ such that
$\Delta_G^{-1/2} V_\eta \eta$ is in ${\rm L}^1(G)$. By slight abuse of notation 
we will call such vectors $\eta$ {\em integrable admissible vectors}. 
A major motivation for studying
integrability in this context is the coorbit theory due to
Feichtinger and Gr\"ochenig \cite{FeiGr,Gr}, which heavily relies on
integrability. Coorbit spaces associated to a quasi-regular
representation can be interpreted as a family of smoothness spaces
induced by the action of the group. While the initial definition 
also relied on irreducibility of the representation, recent work
(e.g. \cite{OlChr}) suggests that a meaningful theory can also 
be developed for reducible representations, as long as integrability
(or somewhat weaker conditions) are retained. 

Besides these connections to function spaces, integrability is also
a representation-theoretic property of independent interest. For any
admissible vector $\eta$, isometry of
$V_\eta$ implies that $ V_\eta \eta = \widetilde{ V_\eta \eta} =  V_\eta \eta \ast  V_\eta \eta$.
Here we used the notation $\tilde{f} (x) = \overline{f(x^{-1})}$ for any function $f$ on $G$. If, in addition, $ F = \Delta_G^{-1/2}  V_\eta \eta \in {\rm
L}^1(G)$, this implies that $F$ is a {\em projection in the group
algebra ${\rm L}^1(G)$}, i.e. it fulfills $F = F \ast F = F^*$, where $F^*(x) = \Delta_G(x)^{-1} \overline{F(x^{-1})}$.
The question which group algebras contain projections has been studied, e.g., in
\cite{Ba,GKT,KT1,KT2,Val1,Val2}. A necessary
representation-theoretic condition for the existence of projections
in $L^1$ hails from the following observation: If $F \in {\rm
L}^1(G)$ is a projection, then ${\rm supp}(\widehat{F}) \subset
\widehat{G}$ is a compact open set. (Note that here, as in the
following, our definition of compactness does not include the
Hausdorff property.) Here $\widehat{F}$ is the group Fourier
transform of $F$, obtained by integrating $F$ against irreducible
representations, and $\widehat{G}$ denotes the unitary dual of $G$,
i.e. the equivalence classes of irreducible representations of $G$.
The topology on $\widehat{G}$ is the Fell topology, defined in terms
of uniform convergence of matrix coefficients on compact subsets of
$G$, see \cite{Di,Fo} for details. Thus, a necessary (but usually
not sufficient) condition for the existence of a projection in ${\rm
L}^1$ is the existence of a compact open set in the dual.

For the setting studied here, the most significant precursors are
\cite{GKT,KT2}; for a unified account of the results see also \cite{KT_book}. 
The paper \cite{GKT} contains a full characterization of
integrability for the case $H= \exp(\mathbb{R} A)$ (unlike our
paper, \cite{GKT} does not impose any condition on the spectrum of
$A$). Briefly, the quasiregular representation associated to such a
group is integrable iff the group is strictly contractive; or more
technically,  iff the signs of ${\rm Re} \lambda$ coincide, for all
eigenvalues $\lambda$ of $A$.

In the following, we
shall mostly concentrate on the following class of matrix groups:
$H$ is an abelian connected matrix group satisfying ${\rm spec}(h)
\subset \mathbb{R}^+$, for all $h \in H$. The chief reason for
focussing on this class is that there exist easily checked
admissibility criteria for it: $H$ is admissible iff ${\rm det}|_H
\not\equiv 1$, and in addition, the dual action of $H$ is free
almost everywhere, see \cite[Theorem 23]{BCFM}. Here, the dual action is
defined by $(h,x) \mapsto h^{-T} x$, where $h^{-T} = (h^T)^{-1}$ . We let $d = {\rm dim}(H)$.

The following theorem is the chief general result of this paper. Before we
formulate it, we need some additional notation. Orbits under $H^T$
are submanifolds of $\mathbb{R}^n$. If a matrix group $H$ is fixed,
we define, for $i \in \mathbb{N}$:
\[
\mathcal{O}_i = \{ x \in \mathbb{R}^n : {\rm dim}(H^T x) = i \}~.
\] Clearly, $\mathcal{O}_i = \emptyset$ whenever $i> d$.
For the class of groups that we consider here, the dual action is free almost
everywhere iff $\mathcal{O}_{d} \not= \emptyset$. The``only if"-part
is clear since the dimension of every free orbit equals the group dimension. For the ``if"-part, note that if $\mathcal{O}_d$ is nonempty, then it is
Zariski-open, and its complement has measure zero. Furthermore, the
action of $H$ on $\mathcal{O}_d$ is free: Since group dimension and
orbit dimension coincide, the fixed groups associated to
$\mathcal{O}_d$ are discrete. But they are also connected, hence
trivial.

Finally, we endow the orbit space with a pseudo-image of Lebesgue
measure.

\begin{thm} \label{thm:main}
Let $H$ be a $d$-dimensional connected, admissible abelian matrix
group satisfying ${\rm spec}(h) \subset \mathbb{R}^+$ for all $h \in
H$. If $\pi$ is integrable, there exists a compact conull set $C
\subset \mathcal{O}_d/H^T$.
\end{thm}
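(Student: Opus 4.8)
The plan is to extract the compact open set in $\widehat{G}$ guaranteed by integrability, to transport it through the Mackey parametrization of $\widehat{G}$ onto the orbit space $\mathcal{O}_d/H^T$, and to check that the resulting set is both conull (from admissibility) and compact (the delicate point). By the discussion preceding the theorem, integrability produces an admissible $\eta$ for which $F \d= \Delta_G^{-1/2} V_\eta \eta$ is a projection in ${\rm L}^1(G)$, so $S \d= {\rm supp}(\widehat{F}) \subset \widehat{G}$ is a compact open set. The Mackey machine describes $\widehat{G}$ in terms of the dual action: the representations attached to the free $d$-dimensional orbits form an open subset $U \subset \widehat{G}$ that is homeomorphic, and compatible with the pseudo-image measure, to $\mathcal{O}_d/H^T$, while the remaining points of $\widehat{G}$ are attached to orbits inside the null set $\mathbb{R}^n \setminus \mathcal{O}_d$. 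I write $q : U \to \mathcal{O}_d/H^T$ for this homeomorphism and $\beta : \widehat{G} \to \mathbb{R}^n/H^T$ for the continuous map sending a representation to its underlying orbit, so that $\beta|_U = q$.

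Next I would establish that $S$ meets $U$ in a conull set. Decomposing the quasiregular representation on the frequency side gives $\pi \cong \dint_{\mathcal{O}_d/H^T} \sigma \, d\mu$, the integral running only over generic orbits because their complement is Lebesgue-null. Admissibility of $\eta$ is a Calder\'on-type condition forcing the measurable field $\sigma \mapsto \eta_\sigma$ to be nonzero for $\mu$-almost every $\sigma$; for $\sigma \in U$ the operator $\widehat{F}(\sigma)$ is the rank-one projection onto $\mathbb{C}\eta_\sigma$, hence nonzero almost everywhere. Therefore $q(S \cap U) \subset \mathcal{O}_d/H^T$ is conull, and I would take $C$ to be this set (or its closure).

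The main obstacle is compactness of $C$. Since $S \cap U$ is open in the compact set $S$, its image need not be compact unless $S$ avoids the boundary stratum $\widehat{G} \setminus U$: a net of generic orbits in $C$ lifts to a net in $S$, which subconverges in $S$, but the limit escapes $\mathcal{O}_d/H^T$ exactly when it lands in $S \setminus U$. Thus the heart of the argument is to show $S \subset U$, equivalently that $\widehat{F}$ vanishes on every representation attached to a non-generic orbit and on all their limits. I would prove this by exploiting that $F$ is a matrix coefficient of $\pi$, whose Plancherel support is exactly $U$: the representations weakly contained in $\pi$ that carry a nonzero value of $\widehat{F}$ should be confined to $U$, since the non-generic representations are induced from nontrivial little groups and ought not to arise as weak limits supporting the rank-one field $\sigma \mapsto \eta_\sigma$. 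Granting $S \subset U$, one gets $S \cap U = S$ compact, so $C = q(S) = \beta(S)$ is the continuous image of a compact set, hence a compact conull subset of $\mathcal{O}_d/H^T$. Controlling this boundary behaviour — in particular the possibly non-proper, non-Hausdorff nature of the orbit space and the weak-containment limits feeding into ${\rm supp}(\widehat{F})$ — is where I expect the real work to lie.
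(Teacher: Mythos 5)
Your proposal has the right skeleton --- integrability yields an $L^1$-projection $F$, hence a compact open set $S=\mathrm{supp}(\widehat F)\subset\widehat G$; conullness of the trace of $S$ on the generic stratum follows because convolution by $F$ projects onto $V_\eta({\rm L}^2(\mathbb{R}^n))$, which is quasi-equivalent to the regular representation; and the whole difficulty is to show $S$ avoids the non-generic part of $\widehat G$. You correctly flag that last step as ``where the real work lies,'' but the mechanism you sketch for it is not viable, and this is a genuine gap. You propose to show that $\widehat F$ vanishes on representations attached to non-generic orbits because such representations ``are induced from nontrivial little groups and ought not to arise as weak limits supporting the rank-one field.'' This conflates the support of $\widehat F$ with the support of the Plancherel measure of $\pi$: for an irreducible $\sigma$ outside the Plancherel support there is no a priori reason that $\sigma(F)=0$, and weak containment arguments only control the closure of $S$, not which boundary points $\widehat F$ actually charges. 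Moreover, your map $\beta:\widehat G\to\mathbb{R}^n/H^T$ and the homeomorphism $q$ are exactly the kind of topological information the Mackey machine does not readily deliver; the paper explicitly abandons the Mackey parametrization for this reason.

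What the paper does instead is purely topological and routes through the coadjoint picture: $G$ is completely solvable, so the Kirillov--Bernat map $\kappa:\mathfrak g^*/{\rm Ad}^*(G)\to\widehat G$ is a homeomorphism, and the projection $P:\mathfrak g^*\to\mathbb{R}^n$ manifestly intertwines the coadjoint and dual actions, inducing a homeomorphism of $\mathcal O^*_{2d}/{\rm Ad}^*(G)$ onto $\mathcal O_d/H^T$ (Lemma \ref{main_lemma}). The exclusion of the lower strata (Theorem \ref{thm:main_coadj}) then uses the Currey--Penney layering: the non-maximal layers are relatively closed in a decreasing chain, each has Hausdorff orbit space, and --- crucially, via equation (\ref{eqn:coad_vs_dual_orbit}), since the fiber of a non-free dual orbit is an affine space of positive dimension --- their connected components are noncompact. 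A compact open $S$ would meet each such layer in a compact open subset of a Hausdorff space, hence in a union of connected components, which must be empty; one peels the layers off iteratively. None of this is about $\widehat F$ vanishing for representation-theoretic reasons; it is forced by the topology of the stratification. To complete your argument you would need to supply this layering machinery (or an equivalent substitute), which is the actual content of the proof.
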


The necessary compactness condition simplifies considerably, if more
is known about the orbit space.
\begin{cor} \label{cor:main_Hd}
Let $H$ be a $d$-dimensional connected, admissible abelian matrix
group satisfying ${\rm spec}(h) \subset \mathbb{R}^+$ for all $h \in
H$. Assume in addition that $\mathcal{O}_d/H^T$ is Hausdorff. Then,
if $\pi$ is integrable, $\mathcal{O}_d/H^T$ is compact.
\end{cor}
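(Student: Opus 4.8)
The plan is to derive the corollary directly from Theorem~\ref{thm:main} by combining the compactness and conullity of the set $C$ with the extra Hausdorff hypothesis. Theorem~\ref{thm:main} already produces a compact conull set $C \subseteq \mathcal{O}_d/H^T$; what remains is to promote this to compactness of the \emph{entire} quotient space. The key observation is that in a Hausdorff space a compact subset is closed, so under the added assumption $C$ is a closed subset of $\mathcal{O}_d/H^T$. Thus the complement $(\mathcal{O}_d/H^T)\setminus C$ is open, and being a null set with respect to the pseudo-image of Lebesgue measure, I would argue it must be empty, whence $\mathcal{O}_d/H^T = C$ is compact.

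The main step is therefore to show that an open null set in $\mathcal{O}_d/H^T$ is empty. Here I would use the fact that the measure on the orbit space is a pseudo-image of Lebesgue measure, so a set in the orbit space is null precisely when its preimage in $\mathcal{O}_d$ has Lebesgue measure zero. Since $\mathcal{O}_d$ is Zariski-open in $\mathbb{R}^n$ (as noted in the excerpt) and the canonical projection $q\colon \mathcal{O}_d \to \mathcal{O}_d/H^T$ is continuous and open, the preimage $q^{-1}\bigl((\mathcal{O}_d/H^T)\setminus C\bigr)$ is an open subset of $\mathbb{R}^n$. An open subset of $\mathbb{R}^n$ that has Lebesgue measure zero must be empty, and hence its image under the surjection $q$ is empty as well. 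This forces $C = \mathcal{O}_d/H^T$, and compactness of $C$ then gives compactness of the whole quotient.

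I expect the main obstacle to be making precise the interplay between openness in the quotient topology and nullity with respect to the pseudo-image measure. Two points require care: first, that the quotient map is genuinely open (this follows from the standard fact that orbit projections for continuous group actions are open, since the action of $H$ on $\mathcal{O}_d$ is by homeomorphisms), and second, that the notion of null set transfers correctly through the pseudo-image construction, so that an open set with null preimage is itself null in the orbit space. Once these are in place, the argument is essentially a short topological deduction, with the Hausdorff hypothesis entering solely to guarantee that the compact set $C$ is closed. I would streamline the write-up by stating the ``open null set is empty'' principle as the crux and deriving the rest as immediate consequences.
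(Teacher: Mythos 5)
Your proposal is correct and follows essentially the same route as the paper: the Hausdorff hypothesis makes the compact conull set $C$ from Theorem~\ref{thm:main} closed, so its complement is an open null set, which must be empty because nonempty open subsets of $\mathbb{R}^n$ have positive Lebesgue measure and nullity transfers through the pseudo-image construction. The paper phrases this via connected components (the complement, being clopen, is a union of components, each of positive measure), but that is only a cosmetic difference from your direct ``open null set is empty'' argument.
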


\begin{prf}
If $\mathcal{O}_d/H^T$ is Hausdorff, then every compact open subset
$C$ is closed as well, and thus it is a union of connected components of $\mathcal{O}_d/H^T$. All
connected components of $\mathcal{O}_d$ are open, thus of positive
Lebesgue measure. Hence all compact components of
$\mathcal{O}_d/H^T$ have positive measure, as well. A compact conull
subset $C$, if it exists, therefore must be all of $\mathcal{O}_d/H^T$.
\end{prf}

A subset $C \subset \mathbb{R}^d$ is
called a {\em topological section} for an $H^T$-invariant subset $U \subset\mathbb{R}^d$
if the map $H \times C \to U,
(h,c) \mapsto h^T c$ is a homeomorphism. Note that the existence of a topological
section implies that $H$ acts freely on $U$. Furthermore, the natural bijection $C \to U/H^T$ is
a homeomorphism as well, implying in particular that the quotient space is Hausdorff.

\begin{cor} \label{cor:int_tcs}
Let $H$ be a $d$-dimensional connected, admissible abelian matrix
group satisfying ${\rm spec}(h) \subset \mathbb{R}^+$ for all $h \in
H$. If there exists a topological section $C \subset \mathcal{O}_d$ for the dual action,
then $\pi$ is integrable iff $C$ is compact.
\end{cor}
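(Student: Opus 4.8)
The plan is to deduce Corollary~\ref{cor:int_tcs} from Corollary~\ref{cor:main_Hd} by exploiting the extra structure provided by the topological section. The existence of a topological section $C$ for the $H^T$-action on the open conull set $\mathcal O_d$ immediately places us in the situation of Corollary~\ref{cor:main_Hd}: as noted in the remark preceding the statement, a topological section forces the action to be free and, more importantly, makes the natural bijection $C \to \mathcal O_d / H^T$ a homeomorphism. In particular $\mathcal O_d / H^T$ is Hausdorff, so the hypothesis of Corollary~\ref{cor:main_Hd} is satisfied and we may freely convert statements about compactness of the quotient into statements about compactness of $C$.

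\smallskip
For the forward implication, I would assume $\pi$ is integrable. Corollary~\ref{cor:main_Hd} then yields that $\mathcal O_d / H^T$ is compact. Transporting this through the homeomorphism $C \cong \mathcal O_d / H^T$ gives that $C$ is compact, which is exactly the asserted conclusion in that direction. This half is essentially a one-line consequence of the previously established corollary together with the homeomorphism supplied by the definition of a topological section.

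\smallskip
The converse direction is where the real content lies, and I expect it to be the main obstacle, since Theorem~\ref{thm:main} and its corollaries only provide a \emph{necessary} condition for integrability. To prove that compactness of $C$ \emph{implies} integrability, I cannot simply invert the earlier results; instead I would need to construct, or invoke the existence of, an integrable admissible vector. The natural strategy is to use the homeomorphism $H \times C \to \mathcal O_d$ to identify $\mathcal O_d$ (hence, up to the null set $\mathbb R^n \setminus \mathcal O_d$, all of $\widehat{G}$ in the relevant range) with a product on which Plancherel/Fourier-analytic constructions are transparent. Compactness of $C$ should allow one to choose a wavelet $\eta$ whose Fourier-side profile is supported on a compact open subset of the dual and is smooth enough that $\Delta_G^{-1/2} V_\eta\eta$ lies in $\mathrm{L}^1(G)$; the free action and the trivialisation by $C$ make the Calder\'on/admissibility integral and the orbital decomposition of the Plancherel measure explicit. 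The delicate points will be (i) checking that the candidate reproducing kernel is genuinely integrable rather than merely square-integrable, which requires control of the decay coming from the compactly supported, smooth Fourier profile together with the Jacobian of the $H^T$-action along $C$, and (ii) ensuring that the construction respects the semidirect-product structure so that integrability of the kernel on $G$ follows from integrability of the corresponding symbol over $H\times C$.
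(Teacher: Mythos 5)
Your forward direction is exactly the paper's argument: the topological section makes $C \to \mathcal{O}_d/H^T$ a homeomorphism, so Corollary~\ref{cor:main_Hd} applies and compactness of the quotient transfers to $C$. For the converse you have correctly identified both that this is the substantive half and what the right construction is, but you leave it as a plan with the two ``delicate points'' unresolved — and this is precisely where the paper does no new work either: it simply cites \cite[Proposition 3.1]{KT2}, whose content is reproduced and generalized in the paper's own Proposition~\ref{prop:qs_impl_int}. To close your sketch you should invoke that machinery rather than rebuild it: since $C$ is a compact topological section, Lemma~\ref{lem:ts_vs_tqs} gives a relatively compact topological quasi-section $VC \subset \mathcal{O}_d$, and Proposition~\ref{prop:qs_impl_int} then produces the integrable admissible vector by taking $\widehat{g} = \phi/|\sigma|^{1/2}$ with $\phi$ smooth and compactly supported; your point (i) is handled there by the relative compactness of $((C,C))$, which forces $h \mapsto \|(\widehat{g}\,\overline{\widehat{g}_h})^{\vee}\|_{L^1}$ to be continuous and compactly supported on $H$, and your point (ii) by the explicit formula $V_g g(x,h) = |\det(h)|^{1/2}(\widehat{g}\,\overline{\widehat{g}_h})^{\vee}(x)$. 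One small point you should make explicit: $\mathcal{O}_d$ is conull (by admissibility), so the subrepresentation $\pi_U$ with $U = \mathcal{O}_d$ is all of $\pi$, which is what lets integrability of $\pi_U$ give integrability of $\pi$ itself.
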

\begin{prf}
If $C$ is a topological section, then the canonical bijection $C \to
\mathcal{O}_d/H^T$ is a homeomorphism. In particular, the latter
space is Hausdorff, and it is compact iff $C$ is compact. Thus
one direction is immediate from the previous corollary. The other
direction follows from \cite[Proposition 3.1]{KT2}.
\end{prf}


\section{Topological properties of dual orbit spaces and integrability}

In this subsection we collect some general observations concerning dual
orbit spaces.  We first recall an observation from \cite{KT2}: 
\begin{lemma} \label{lem:comp_subset_os}
Let $U \subset \mathbb{R}^n$ be arbitrary. Then $U/H^T$ is compact iff
there exists a compact set $C \subset U$ with $H^T C= U$.
\end{lemma}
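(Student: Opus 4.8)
The plan is to exploit two standing facts about the orbit map $q : U \to U/H^T$, $q(u) = H^T u$: it is a continuous surjection, and---being the quotient map of a topological group action---it is open. The latter holds because for any open $V \subseteq U$ one has $q^{-1}(q(V)) = \bigcup_{h \in H} h^T V$, a union of open sets, so $q(V)$ is open in the quotient topology. Openness of $q$ is exactly what lets me pass between open covers of $U/H^T$ and families of open subsets of $U$.

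For the easy implication (existence of $C$ $\Rightarrow$ compactness), I would argue directly. Suppose $C \subseteq U$ is compact with $H^T C = U$. Then $q(C)$ is compact, being the continuous image of a compact set. Moreover $q(C) = U/H^T$: the equality $H^T C = U$ says precisely that every orbit in $U$ already meets $C$, so every point of $U/H^T$ lies in $q(C)$. Hence $U/H^T$ is compact. This direction uses neither openness of $q$ nor any regularity of $U$.

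For the converse I would run the standard relatively-compact-cover argument. Assuming $U/H^T$ is compact, cover $U$ by open sets $V_\alpha \subseteq U$ each of which is relatively compact in $U$, so that the closure $\overline{V_\alpha}$ taken in $U$ is compact and contained in $U$. Applying $q$, the family $\{ q(V_\alpha) \}$ is an open cover of the compact space $U/H^T$, from which I extract a finite subcover $q(V_1), \dots, q(V_m)$. Then $K := \overline{V_1} \cup \cdots \cup \overline{V_m}$ is a compact subset of $U$, and since $q(K) \supseteq \bigcup_i q(V_i) = U/H^T$ while $K \subseteq U$ with $U$ invariant, we obtain $H^T K = U$. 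Taking $C = K$ completes the argument.

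The step that needs care---and the only place where more than soft topology enters---is the availability of relatively compact open neighborhoods inside $U$, that is, the local compactness of $U$. An arbitrary subset of $\mathbb{R}^n$ need not be locally compact, so strictly speaking the converse requires $U$ to be locally closed (equivalently, open in its closure); I expect this to be the main obstacle. It is, however, harmless in the present setting, since the invariant sets $U$ to which the lemma is applied---notably the Zariski-open set $\mathcal{O}_d$ and its open invariant subsets---are open in $\mathbb{R}^n$ and hence locally compact. I would therefore carry out the converse under this mild hypothesis, noting that every $U$ of interest here satisfies it.
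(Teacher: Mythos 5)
Your argument is correct and is essentially the standard proof that the paper outsources to \cite[Corollary 4.2]{KT2}: the forward implication is just continuity of the orbit map, and the converse is the usual open-quotient-map plus finite-subcover argument, using that $q^{-1}(q(V))=\bigcup_{h\in H}h^TV$ makes $q$ open. Your caveat is well taken and is the one substantive point worth recording: the converse direction genuinely requires each point of $U$ to have a neighborhood with compact closure contained in $U$, so the hypothesis ``$U$ arbitrary'' in the statement is broader than what this argument (or the locally compact setting of \cite{KT2}) actually supports; the correct hypothesis is that $U$ be locally compact, e.g.\ locally closed and $H^T$-invariant in $\mathbb{R}^n$. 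Since every $U$ to which the lemma is applied in the paper is open in $\mathbb{R}^n$ (for instance in the proof of Proposition~\ref{prop:proper}, or the open invariant subsets of $\mathcal{O}_d$), this costs nothing in practice, and your proof covers all cases that are actually used.
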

\begin{prf}
Compare the proof of \cite[Corollary 4.2]{KT2}.
\end{prf}

We further note that, as a quotient space of a second
countable space, $\mathbb{R}^n/H^T$ is second countable as well, and 
so are all its subspaces. It follows that a subset is compact iff it is
sequentially compact. Our discussion of concrete cases in the following
sections will mostly rely on this observation. 

Our next aim is to generalize a known sufficient condition for the
existence of integrable admissible vectors. It has been demonstrated in  \cite[Proposition 3.1]{KT2} how to 
construct, given an open conull subset $U$ having a topological section with compact closure inside $U$, 
admissible vectors that give rise to an integrable projection. Our aim is to replace the topological section 
by a weaker notion, a {\em topological quasi-section}. For this purpose, we borrow some notation introduced
by Richard Palais \cite{Pal}:
Given $Y, Z \subset \mathbb{R}^n$,
we let
\[
 ((Y,Z)) = \{ h \in H : h^T Y \cap Z \not= \emptyset \}~.
\]
\begin{defn}
Let $U \subset \mathbb{R}^n$ be open and $H^T$-invariant. 
 A subset $C \subset U$ is called {\em
topological quasi-section} (for $U$) if it is open, with $H^T C=U$, and such that $((C,C))$ is relatively compact.
\end{defn}

The following lemma is clear, but useful to note. 
\begin{lemma} \label{lem:ts_vs_tqs}
Let $U \subset \mathbb{R}^n$ be open and $H^T$-invariant.
Assume that $C \subset U$ is a topological section.
Let $V \subset H$ be a relatively compact open neighborhood of the identity. Then $VC \subset U$ is
a topological quasi-section.
\end{lemma}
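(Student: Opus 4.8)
The plan is to verify the three defining properties of a topological quasi-section directly for the set
\[
 VC := \{\, v^T c : v \in V,\ c \in C \,\},
\]
which is exactly the image of $V \times C$ under the section map $\Phi : H \times C \to U$, $(h,c) \mapsto h^T c$. Since $\Phi$ is assumed to be a homeomorphism, I intend to transport each of the three conditions (openness, the covering relation $H^T(VC) = U$, and relative compactness of $((VC,VC))$) back to elementary statements about $V \times C$ inside $H \times C$.

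Openness and the covering property are immediate. Because $V$ is open in $H$, the product $V \times C$ is open in $H \times C$; applying the homeomorphism $\Phi$ shows that $VC = \Phi(V \times C)$ is open in $U$, and hence in $\mathbb{R}^n$. For the covering property, note first that $VC \subseteq U$ forces $H^T(VC) \subseteq U$ by $H^T$-invariance of $U$. Conversely, since $V$ is a neighborhood of the identity we have $e \in V$, so $C = eC \subseteq VC$, and therefore $U = H^T C \subseteq H^T(VC)$, the equality $H^T C = U$ being exactly the surjectivity of $\Phi$. This gives $H^T(VC) = U$.

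The one substantive point is the relative compactness of $((VC,VC))$, and this is where I would use injectivity of $\Phi$. Suppose $h \in ((VC,VC))$, so that $h^T(VC) \cap VC \neq \emptyset$. Unwinding the definitions, there are $v_1,v_2 \in V$ and $c_1,c_2 \in C$ with $h^T v_1^T c_1 = v_2^T c_2$, i.e.\ $(v_1 h)^T c_1 = v_2^T c_2$ using the transpose identity $h^T v_1^T = (v_1 h)^T$. Thus $\Phi(v_1 h, c_1) = \Phi(v_2, c_2)$, and injectivity of $\Phi$ yields $v_1 h = v_2$ (and $c_1 = c_2$). Hence $h = v_1^{-1} v_2 \in V^{-1} V$, proving the inclusion $((VC,VC)) \subseteq V^{-1} V$.

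It then remains to observe that $V^{-1}V$ is relatively compact. Since $V$ is relatively compact, its closure $\overline{V}$ is compact, and $\overline{V}^{-1}\,\overline{V}$ is the image of the compact set $\overline{V} \times \overline{V}$ under the continuous map $(a,b) \mapsto a^{-1} b$, hence compact (and closed, as $H$ is Hausdorff). Therefore $((VC,VC)) \subseteq V^{-1}V \subseteq \overline{V}^{-1}\,\overline{V}$ has compact closure, completing the verification. The only real obstacle is getting the inclusion $((VC,VC)) \subseteq V^{-1}V$ right: everything hinges on injectivity of the section map, and the bookkeeping requires care with the transpose identity so that the coincidence $h^T v_1^T c_1 = v_2^T c_2$ in $U$ is correctly recognized as an equality of $\Phi$-images.
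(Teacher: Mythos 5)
Your proof is correct and complete; it is precisely the routine verification the paper omits (the authors simply declare the lemma ``clear, but useful to note'' and give no argument). All three properties are checked as intended, and the key inclusion $((VC,VC)) \subseteq V^{-1}V$ via injectivity of the section map, together with the transpose identity $h^T v_1^T = (v_1 h)^T$, is exactly the right bookkeeping.
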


We already noted that the existence of a topological section implies that the action is free. 
In the case of a compact quasi-section, a weakened version of this observation applies
\begin{lemma}
 Let $U \subset \mathbb{R}^n$ be open and $H^T$-invariant. If $U$ admits a topological quasi-section, then $H_\xi$ is compact for all $\xi \in U$. 
\end{lemma}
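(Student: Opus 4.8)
The plan is to show that $H_\xi$ is a closed subgroup of $H$ that is contained in a conjugate of the relatively compact set $((C,C))$; it is then compact, being a closed subset of a compact set.

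First I would record two preliminary observations. The stabilizer $H_\xi$ of $\xi$ under the dual action is closed, since it is the preimage of the single point $\xi$ under the continuous orbit map $h \mapsto h^{-T}\xi$ on $H$ (and $\mathbb{R}^n$ is Hausdorff). Moreover, because $H$ is a group, the defining condition $h^{-T}\xi = \xi$ is equivalent to $h^{T}\xi = \xi$, so I may use the description $H_\xi = \{h \in H : h^T \xi = \xi\}$ throughout.

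The main step uses the quasi-section hypothesis together with a conjugation identity. Since $H^T C = U$ and $\xi \in U$, I can choose $h_0 \in H$ and $c \in C$ with $\xi = h_0^T c$. Now let $h \in H_\xi$ and set $g = h_0 h h_0^{-1}$. A transpose computation, resting on the identity $h_0^{-T} h^T h_0^T = (h_0 h h_0^{-1})^T$, converts $h^T \xi = \xi$ into $g^T c = c$. As $c \in C$, the relation $g^T c = c \in C$ gives $g^T C \cap C \neq \emptyset$, that is, $g \in ((C,C))$. Hence conjugation by $h_0$ carries $H_\xi$ into $((C,C))$, and therefore $H_\xi \subseteq h_0^{-1}\,\overline{((C,C))}\,h_0$.

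Finally I would conclude as follows: by assumption $\overline{((C,C))}$ is compact, and conjugation by $h_0$ is a homeomorphism of $H$, so the set $h_0^{-1}\,\overline{((C,C))}\,h_0$ is compact; since $H_\xi$ is a closed subset of it, $H_\xi$ is compact. I expect the only point requiring care to be the transpose and conjugation bookkeeping in the key identity. In the abelian case that is the paper's main focus, this step degenerates entirely: $h^T h_0^T c = h_0^T h^T c$ forces $h^T c = c$, so the conjugation disappears and one obtains $H_\xi \subseteq ((C,C))$ directly.
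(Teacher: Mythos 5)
Your proof is correct and follows essentially the same route as the paper: the paper reduces to $\xi \in C$ (which is precisely your conjugation step, left implicit) and then observes that $((C,C))$ contains the stabilizer, whose closedness gives compactness. You have simply written out the conjugacy of stabilizers along an orbit and the transpose bookkeeping explicitly, which the paper's two-line proof takes for granted.
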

\begin{prf}
 If $C$ is a topological quasi-section, it is sufficient to prove compactness of $H_\xi$, for all $\xi \in C$. 
 But this follows from the fact that $((C,C))$ contains the stabilizer. 
\end{prf}

\begin{lemma} \label{lem:quasi_sect}
Let $U \subset \mathbb{R}^n$ be open and $H^T$-invariant.
If there exists a topological
quasi-section in $U$, then for all compact sets $C_1,C_2 \subset U$,
$((C_1,C_2))$ is relatively compact.
\end{lemma}

\begin{prf}
Denote the quasi-section by $C$. Then, by compactness of the $C_i$,
we have $C_i \subset F_i C$, for suitable finite $F_i \subset H$
($i=1,2$). A straightforward computation shows that $((C_1,C_2))
\subset F_2 ((C,C)) F_1^{-1}$, which is relatively compact.
\end{prf}

We are particularly interested in compact orbit spaces. In this setting, the notion
of a quasi-section becomes particularly useful. In order to properly appreciate the
usefulness of the following quite simple result, recall that the computation of cross-sections
can be quite cumbersome. What is worse, the failure to produce a {\em topological} section
usually does not provide any clue regarding the existence of such a section.

By contrast, as the following proposition shows, there is no ``clever" way of picking quasi-sections,
when the orbit space is compact: Either any relatively compact open set meeting every orbit will do, or none.
Since topological sections give rise to quasi-sections, this provides a useful necessary condition for existence
of topological sections.

\begin{prop} \label{prop:ex_ts}
Let $U \subset \mathbb{R}^n$ be open and $H^T$-invariant.
Assume that $U/H^T$ is compact, and let $C \subset U$ denote a relatively compact open set with $H^TC = U$.
Then $U$ has a topological quasi-section iff $C$ is a topological quasi-section, i.e., iff $((C,C))$ is relatively compact.

In particular, if $((C,C))$ is not relatively compact, then $U$ does not have a topological section.
\end{prop}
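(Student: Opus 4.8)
The plan is to prove the two implications of the equivalence separately and then deduce the ``in particular'' clause by combining the result with Lemma~\ref{lem:ts_vs_tqs}. Since $C$ is assumed from the outset to be open with $H^T C = U$, the assertion ``$C$ is a topological quasi-section'' is by definition nothing but ``$((C,C))$ is relatively compact''; hence the three formulations in the statement reduce to a single genuine equivalence, namely ``$U$ has a topological quasi-section $\iff$ $((C,C))$ is relatively compact''.

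The implication $\Leftarrow$ is immediate: if $((C,C))$ is relatively compact then $C$ is itself a topological quasi-section for $U$, so $U$ admits one. For the converse I would fix a topological quasi-section $C'$ of $U$ and exploit Lemma~\ref{lem:quasi_sect}. The key point is that relative compactness of $C$ inside $U$ means exactly that $\overline{C}$ is a \emph{compact} subset of $U$; this is also where the hypothesis that $U/H^T$ is compact enters, since by Lemma~\ref{lem:comp_subset_os} it is precisely what guarantees that such a relatively compact open $C$ with $H^T C = U$ exists at all. With $\overline{C} \subseteq U$ compact, Lemma~\ref{lem:quasi_sect} applied to $C_1 = C_2 = \overline{C}$ yields that $((\overline{C},\overline{C}))$ is relatively compact, and from $((C,C)) \subseteq ((\overline{C},\overline{C}))$ I conclude that $((C,C))$ is relatively compact, i.e. $C$ is a quasi-section. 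This establishes the equivalence.

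For the final ``in particular'' assertion I would argue by contraposition. Suppose $U$ possesses a topological section $S$. Choosing a relatively compact open neighborhood $V \subseteq H$ of the identity, Lemma~\ref{lem:ts_vs_tqs} shows that $VS$ is a topological quasi-section for $U$; thus $U$ admits a quasi-section, and the equivalence just proved forces $((C,C))$ to be relatively compact. Reading this contrapositively gives exactly the claim: if $((C,C))$ is not relatively compact, then $U$ cannot carry a topological section.

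The substantive work is carried by Lemma~\ref{lem:quasi_sect}, so the argument is short; the one place that genuinely requires care — and which I expect to be the main, if modest, obstacle — is the reading of ``relatively compact''. It must mean relative compactness \emph{in} $U$, i.e. $\overline{C} \subseteq U$, not merely compactness of $\overline{C}$ in $\mathbb{R}^n$. If $\overline{C}$ were allowed to meet $\partial U$ the statement would fail: such a $C$ cannot be covered by finitely many translates $h^T C'$ of a quasi-section, and one can exhibit $U$ and $C$ with $U/H^T$ compact and a quasi-section present, yet $((C,C))$ equal to all of $H$. Securing a finite subcover of $\overline{C}$ by translates $h^TC'$ — which is legitimate precisely because $\overline{C}$ is a compact subset of the open union $U = H^T C'$ — is therefore the crux, and it is guaranteed exactly by relative compactness in $U$.
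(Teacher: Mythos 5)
Your argument is correct and coincides with the paper's own (very terse) proof: the ``if'' direction is immediate from the definition, the ``only if'' direction is Lemma~\ref{lem:quasi_sect} applied to $C_1=C_2=\overline{C}$, and the final claim about topological sections follows by combining the equivalence with Lemma~\ref{lem:ts_vs_tqs}. Your insistence on reading ``relatively compact'' as $\overline{C}$ compact \emph{and contained in} $U$ is the correct and necessary interpretation, since Lemma~\ref{lem:quasi_sect} only applies to compact subsets of $U$ and, as your one-dimensional example indicates, the statement fails if $\overline{C}$ is allowed to meet $\partial U$.
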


\begin{prf}
The ``if"-direction is clear.
The other direction follows from Lemma \ref{lem:quasi_sect}.

The statement concerning topological sections now follows from Lemma \ref{lem:ts_vs_tqs}.
\end{prf}

The following result provides the extension of \cite[Proposition 3.1]{KT2}.
\begin{prop} \label{prop:qs_impl_int}
Let $H< {\rm GL}(n,\mathbb{R})$ be a closed matrix group. Suppose
there exists an open $H^T$-invariant set $U \subset \mathbb{R}^n$
that admits a relatively compact topological quasi-section for the
dual action. Then the restriction $\pi_U$ of the quasi-regular
representation to the invariant closed subspace $\mathcal{H}_U = \{
f \in {\rm L}^2(G): \widehat{f} \cdot 1_U = \widehat{f} \}$ is
integrable.
\end{prop}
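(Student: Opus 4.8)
The plan is to construct an integrable admissible vector for $\pi_U$ directly, by choosing $\widehat\eta$ to be supported on the quasi-section and building the wavelet coefficient from the geometry that the quasi-section controls. First I would recall the Plancherel-theoretic picture for semidirect products: the representation $\pi_U$ decomposes as a direct integral over the dual orbits meeting $U$, and admissibility reduces to a Calder\'on-type condition on $\widehat\eta$ together with integrability of the fixed-group stabilizers. Since the quasi-section $C$ is relatively compact and open with $H^T C = U$, I would take $\widehat\eta$ to be a smooth compactly supported function on $\RR^n$ with support inside $C$ (or a small dilate of it), normalized so that the Calder\'on condition $\int_H |\widehat\eta(h^T\xi)|^2\, dh = 1$ holds for almost every $\xi \in U$. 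The freeness of the action on $U$ (which follows from the existence of a quasi-section by the stabilizer-compactness lemma, together with the connectedness that forces discrete-hence-trivial stabilizers in our setting) guarantees that this normalization is achievable and yields an admissible $\eta$.

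The heart of the argument is the $\lpraum{1}{G}$ estimate for $F = \Delta_G^{-1/2} V_\eta\eta$. I would write out $V_\eta\eta(x,h) = \langle \eta, \pi(x,h)\eta\rangle$ explicitly on the Fourier side, so that the $x$-integration becomes a Fourier transform in the first variable and the $h$-dependence is captured by the overlap of $\widehat\eta$ with its $h^T$-translate. The key point is that $V_\eta\eta(x,h)$ can only be nonzero when $\supp(\widehat\eta) \cap h^T \supp(\widehat\eta) \neq \emptyset$, i.e.\ when $h \in ((\supp\widehat\eta, \supp\widehat\eta))$. By Lemma~\ref{lem:quasi_sect}, since $\supp\widehat\eta$ is a compact subset of $U$ and $U$ admits a topological quasi-section, this set $((\supp\widehat\eta,\supp\widehat\eta))$ is relatively compact in $H$. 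Thus the $h$-support of $F$ is confined to a compact subset of $H$, which is exactly the mechanism that converts admissibility into integrability.

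With the $h$-support compactly controlled, the remaining task is to bound the $x$-integral uniformly. For each fixed $h$ in the relevant compact set, $x \mapsto V_\eta\eta(x,h)$ is (up to a unimodular factor and the determinant normalization) the inverse Fourier transform of the product $\overline{\widehat\eta} \cdot (\pi\text{-translate of }\widehat\eta)$, a function that is smooth and compactly supported in $\xi$ with bounds uniform in $h$ ranging over a compact set. Hence its inverse Fourier transform lies in $\lpraum{1}{\RR^n}$ with an $L^1$-norm bounded uniformly in $h$; integrating this bound against the compactly supported $h$-measure (with the bounded modular factor $\Delta_G^{-1/2}$ absorbed) yields finiteness of $\nor{F}_1$. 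Finally I would record that $F = F \ast F = F^*$ follows from the general identity $V_\eta\eta = \widetilde{V_\eta\eta} = V_\eta\eta \ast V_\eta\eta$ noted in the introduction, so that $\eta$ is genuinely an integrable admissible vector.

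The main obstacle I anticipate is the uniformity of the $x$-integral estimate as $h$ varies: one must ensure that the constant bounding $\nor{V_\eta\eta(\cdot,h)}_{L^1(\RR^n)}$ does not blow up at the boundary of the compact $h$-set, and that the change of variables relating $\int_H$ to the orbit-space measure (the pseudo-image of Lebesgue measure) is handled correctly, including the Jacobian arising from the dual action and the left-Haar versus modular normalizations. Choosing $\widehat\eta$ smooth rather than merely a characteristic function gives the decay in $x$ needed for the $L^1$ bound, and the relative compactness from Lemma~\ref{lem:quasi_sect} is precisely what makes the uniformity over $h$ unproblematic once the integrand is controlled pointwise.
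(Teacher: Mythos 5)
Your proposal follows essentially the same route as the paper's proof: choose $\widehat\eta$ smooth and compactly supported near the quasi-section, normalize by the Calder\'on integral $\sigma(\xi)=\int_H|\widehat\eta(h^T\xi)|^2\,dh$, use Lemma~\ref{lem:quasi_sect} to confine the $h$-support of $V_\eta\eta$ to the relatively compact set $((\supp\widehat\eta,\supp\widehat\eta))$, and bound the $x$-integral uniformly on the Fourier side. One caveat: your claim that the existence of a quasi-section forces the action on $U$ to be \emph{free} is false --- it only forces the stabilizers to be compact (the paper states this explicitly, and indeed applies the proposition to discrete series subrepresentations where stabilizers are compact but nontrivial); fortunately freeness is not needed for your normalization, which only requires $\sigma$ to be finite (relative compactness of $((\{\xi\},\supp\widehat\eta))$) and strictly positive (because $H^TC=U$ and $\widehat\eta$ can be taken bounded below on a set every orbit meets), so the argument survives without that remark.
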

\begin{prf}
We adapt the proof from \cite{KT2}. Let $C \subset U$
denote the relatively compact quasi-section. Pick a relatively compact open set
$W$ with $\overline{C} \subset W \subset U$. Let $\phi \in C_c^\infty(U)$ with
$1_{\overline{C} } \le \phi \le 1_{W}$. For $\xi \in
U$, define
\[
\sigma(\xi) = \int_{H} |\phi(h^T \xi)|^2 dh~.
\] Then $\sigma$ is a well-defined smooth function on $U$: Let $\xi \in U$ be arbitrary, and let $U' \subset U$
denote a compact neighborhood of $\xi$.
Then for all $\xi' \in U'$,
\[
\sigma(\xi') = \int_{\overline{((U',\overline{W}))}} |\phi(h^T \xi')|^2 dh~.
\] and Lemma \ref{lem:quasi_sect} ensures that the integration domain is compact.
In particular, $\sigma$ is well-defined, and compactness of the integration domain allows to
interchange differentiation and integration, which shows that $\sigma$ is
smooth.

Furthermore, for every $\xi \in U$, the map $h \mapsto \phi(h^T
\xi)$ is a positive continuous function, and not identically zero: There exists $h \in H$ with $h^T \xi \in C$, and thus
$|\phi(h^T \xi)|^2 \ge 1$ by choice of $\phi$.
Hence $\sigma(\xi)>0$.

But then $\widehat{g} = \phi/|\sigma|^{1/2}$ is a smooth, compactly
supported function on $U$, which we extend trivially to all of
$\mathbb{R}^n$. We let $g \in {\rm L}^2(\mathbb{R}^n)$ denotes its
inverse Fourier transform; by construction, $g$ is in fact a
Schwartz function. Letting $\widehat{g}_h : \xi \mapsto
\widehat{g}(h^T \xi)$, we find that $\widehat{g} \widehat{g}_h = 0$ whenever
$h \not\in C_{W,W}$, and Lemma \ref{lem:quasi_sect} implies that
this set is relatively compact.

Using the properties of $\widehat{g}$ collected thus far, the proof
of \cite[Proposition 3.1]{KT2} is seen to go through. Since our 
notation somewhat differs from that in \cite{KT2}, we include a short sketch.
We first rewrite the wavelet
transform as
\[
V_g g(x,h) =  |{\rm det}(h)|^{1/2}\left( \widehat{g} \cdot \overline{\widehat{g}_h} \right)^{\vee}(x)~.
\] It follows that we can estimate
\[
\| \Delta_G^{-1/2}V_g g \|_{{\rm L}^1} \le
 \int_{H}  \left\|\left( \widehat{g} \cdot \overline{\widehat{g}_h} \right)^{\vee} \right\|_{{\rm L}^1}
 |\det(h)|^{-1/2} \Delta_G^{-1/2}(h) dh ~.
\]
The vector-valued mapping
\[
h \mapsto \widehat{g} \cdot \overline{\widehat{g}_h} \in C_c^\infty(U)
\] is continuous with respect to the Schwartz topology, implying that the mapping
\[ h \mapsto \| (\widehat{g} \cdot \widehat{g}_h)^\vee \|_{{\rm L}^1}
|\det(h)|^{-1/2} \Delta_G^{-1/2}(h) 
\] is continuous as well, and we noted above that it is also compactly supported. This implies $\Delta_G^{-1/2} V_g g
\in {\rm L}^1(G)$. 
\end{prf}

Note that the proof shows that we may replace the factor $\Delta_G^{-1/2}$ by any weight function on $H$ that is bounded on compact sets. 

Unlike for topological sections, the existence of quasi-sections does not imply that the group action is free.
In particular, we can apply the proposition to the case where $U$ is an open dual orbit with compact
stabilizer. The subrepresentations $\pi_U$ corresponding to such orbits are precisely
the discrete series subrepresentations \cite[Corollary 21]{Fu_Calderon}. Hence we obtain
the following result.
\begin{cor}
Let $H< {\rm GL}(n,\mathbb{R})$ be a closed matrix group. Every
discrete series subrepresentation of the quasi-regular representation $\pi$ is integrable.
\end{cor}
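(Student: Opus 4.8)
The plan is to deduce the corollary directly from Proposition~\ref{prop:qs_impl_int}, by exhibiting, for each discrete series subrepresentation, an open $H^T$-invariant set $U$ that admits a relatively compact topological quasi-section. The cited result \cite[Corollary 21]{Fu_Calderon} identifies the discrete series subrepresentations $\pi_U$ precisely with those arising from open dual orbits $U = H^T \xi_0$ having compact stabilizer $H_{\xi_0}$. So I would fix such an orbit and verify that it satisfies the hypothesis of the proposition.

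First I would take $U = H^T \xi_0$, which is open and $H^T$-invariant by assumption, and observe that it is a single orbit. The natural candidate for a quasi-section is a small relatively compact open neighborhood of a point: choose a point $\xi_0 \in U$ with compact stabilizer $H_{\xi_0}$, let $V \subset H$ be a relatively compact open neighborhood of the identity, and set $C = (V^T)^{-1} \xi_0$ appropriately—more cleanly, take $C$ to be any relatively compact open subset of $U$ containing $\xi_0$. Since $U$ is a single orbit, $H^T C = U$ automatically. The content is then to check that $((C,C)) = \{ h \in H : h^T C \cap C \neq \emptyset \}$ is relatively compact.

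The main obstacle is precisely this relative compactness of $((C,C))$, which is where the compactness of the stabilizer must be used. The point is that for a single orbit, the orbit map $H/H_{\xi_0} \to U$, $h H_{\xi_0} \mapsto h^T \xi_0$, is a homeomorphism onto $U$ (the orbit being locally closed, in fact open, so the action map descends to a homeomorphism). Under this identification, $h \in ((C,C))$ forces $h^T \xi_0$ and $\xi_0$ to lie in $\overline{C} \cdot H_{\xi_0}$-type neighborhoods; more carefully, if $C$ is a relatively compact open set, then $((C,C))$ maps into a relatively compact subset of $H/H_{\xi_0}$, and since $H_{\xi_0}$ is compact, the preimage of a relatively compact set under $H \to H/H_{\xi_0}$ is relatively compact. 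Thus $((C,C))$ is relatively compact, so $C$ is a topological quasi-section for $U$.

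Having produced a relatively compact topological quasi-section in $U$, Proposition~\ref{prop:qs_impl_int} applies verbatim and yields that $\pi_U$ is integrable. Since $U$ was an arbitrary open orbit with compact stabilizer, and these are exactly the discrete series subrepresentations by \cite[Corollary 21]{Fu_Calderon}, this proves the corollary. The only genuinely technical step is the identification of $((C,C))$ with a relatively compact subset of $H$ via the properness afforded by compact stabilizers; once that is in hand, the result is an immediate specialization of the proposition.
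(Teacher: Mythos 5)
Your proposal is correct and follows exactly the paper's route: the paper likewise deduces the corollary by applying Proposition~\ref{prop:qs_impl_int} to an open dual orbit with compact stabilizer, citing \cite[Corollary 21]{Fu_Calderon} for the identification of these orbits with the discrete series subrepresentations. The only difference is that you spell out the verification that a small relatively compact open neighborhood is a quasi-section (via the homeomorphism $H/H_{\xi_0}\to U$ and compactness of $H_{\xi_0}$), a step the paper leaves implicit; just make sure to choose $C$ with $\overline{C}\subset U$, as the proof of the proposition requires.
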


While the notion of quasi-section may seem somewhat ad hoc, a result by Palais shows that the
existence of such a quasi-section naturally connects to well-studied topological properties of the orbit space.
Recall for the following that {\em properness} of the dual action on an $H^T$-invariant subset $U$ means that the inverse image of any compact subset $C \subset U \times U$ under the mapping $H \times U \to U \times U$, $(h,\xi) \mapsto (h^T \xi, \xi)$ is compact. 
\begin{prop} \label{prop:proper}
 Let $U \subset \mathbb{R}^n$ be open and $H^T$-invariant. Then the following statements are equivalent:
 \begin{enumerate}
  \item[(a)] There exists a topological quasi-section $C \subset U$ such that $\overline{C}$ is contained in $U$ and compact.
  \item[(b)] The orbit space $U/H^T$ is compact and the dual action of $H$ on $U$ is proper. 
 \end{enumerate}
 If any of these conditions hold, $U/H^T$ is Hausdorff and metrizable. 
\end{prop}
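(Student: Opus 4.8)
The plan is to establish the equivalence by proving (a) $\Rightarrow$ (b) and (b) $\Rightarrow$ (a) separately, and then to deduce the Hausdorff and metrizability assertions from (b). The whole argument is organized around a single dictionary between the Palais sets and the orbit map $\Phi : H \times U \to U \times U$, $(h,\xi) \mapsto (h^T \xi, \xi)$, namely
\[
((K_1,K_2)) = \pi_H\bigl(\Phi^{-1}(K_2 \times K_1)\bigr),
\]
where $\pi_H$ denotes the projection onto the $H$-factor. This identity converts relative compactness of sets of the form $((\cdot,\cdot))$ into compactness of $\Phi^{-1}$ of compact boxes, which is precisely the language of properness, so the two implications become near-mirror images of each other.

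For (a) $\Rightarrow$ (b): compactness of $U/H^T$ is immediate, since $\overline{C}$ is a compact subset of $U$ with $H^T \overline{C} \supseteq H^T C = U$, so Lemma \ref{lem:comp_subset_os} applies. For properness, I would take a compact $K \subseteq U \times U$, set $K_i = \pi_i(K)$, and observe $\Phi^{-1}(K) \subseteq \Phi^{-1}(K_1 \times K_2)$. The displayed identity gives $\pi_H(\Phi^{-1}(K_1 \times K_2)) = ((K_2,K_1))$, which is relatively compact by Lemma \ref{lem:quasi_sect}, while the $U$-projection of $\Phi^{-1}(K_1 \times K_2)$ lies in $K_2$. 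Hence $\Phi^{-1}(K_1 \times K_2)$ sits inside the compact box $\overline{((K_2,K_1))} \times K_2$; since $\Phi$ is continuous and $K$ is closed, $\Phi^{-1}(K)$ is a closed subset of this compact set and is therefore compact.

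For (b) $\Rightarrow$ (a): compactness of $U/H^T$ furnishes, via Lemma \ref{lem:comp_subset_os}, a compact $K \subseteq U$ with $H^T K = U$. As $U$ is open and $K$ compact, I would thicken $K$ to a relatively compact open $C$ with $K \subseteq C$ and $\overline{C} \subseteq U$ (a finite union of small balls, or a small $\varepsilon$-neighborhood). Then $H^T C \supseteq H^T K = U$ together with $C \subseteq U$ forces $H^T C = U$. Finally $((C,C)) \subseteq ((\overline{C},\overline{C})) = \pi_H(\Phi^{-1}(\overline{C} \times \overline{C}))$, and properness makes $\Phi^{-1}(\overline{C} \times \overline{C})$ compact, so $((C,C))$ is relatively compact; thus $C$ is a topological quasi-section with compact closure in $U$.

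For the closing statement I would argue as follows. The quotient map $q : U \to U/H^T$ is open, since the saturation $H^T V$ of any open $V$ is the union of the open sets $h^T V$. Properness means exactly that $\Phi$ is a proper map into the locally compact Hausdorff space $U \times U$, hence $\Phi$ is closed, so the orbit relation $R = \Phi(H \times U)$ is closed in $U \times U$; the standard criterion that an open quotient map with closed graph relation has Hausdorff quotient then yields that $U/H^T$ is Hausdorff. For metrizability I would invoke the remark following Lemma \ref{lem:comp_subset_os} that $U/H^T$ is second countable, so that being additionally compact Hausdorff (hence regular), Urysohn's metrization theorem applies. The $((\cdot,\cdot))$-versus-$\Phi$ bookkeeping is routine once the identity above is in place; I expect the genuine obstacle to be the point-set step in the final paragraph, namely verifying that properness of $\Phi$ forces it to be a \emph{closed} map (this is where local compactness of $U \times U$ is essential) and that closedness of $R$ combined with openness of $q$ really delivers Hausdorffness.
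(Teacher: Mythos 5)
Your proof is correct, but it takes a different route from the paper's. The paper delegates the substantive implications to Palais's slice theorem machinery: for (a) $\Rightarrow$ (b) it observes via Lemma \ref{lem:quasi_sect} that all relatively compact open sets are ``relatively thin'' in Palais's sense, so every point has a ``small neighborhood,'' and then cites implication $(1)\Rightarrow(3)$ of \cite[Theorem 1.2.9]{Pal} for properness; for (b) $\Rightarrow$ (a) it cites $(3)\Rightarrow(4)$ of the same theorem; and the Hausdorff property comes from $(3)\Rightarrow(2)$ there. You instead make the whole argument self-contained through the identity $((K_1,K_2)) = \pi_H\bigl(\Phi^{-1}(K_2 \times K_1)\bigr)$, which translates directly between the Palais sets and the properness of the map $\Phi$, and you prove the Hausdorff claim from scratch via the standard facts that a proper map into a locally compact Hausdorff space is closed and that an open quotient map with closed orbit relation has Hausdorff quotient. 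Both arguments lean on the same internal lemmas (Lemma \ref{lem:quasi_sect} for one direction, Lemma \ref{lem:comp_subset_os} and a thickening of a compact set meeting every orbit for the other, second countability plus Urysohn for metrizability). What your version buys is independence from the precise statement and terminology of Palais's theorem, at the cost of having to verify the two point-set facts you flag at the end --- both of which are standard and which you identify correctly as the only places where care is needed. The paper's version is shorter but requires the reader to match its definition of properness and of $((\cdot,\cdot))$ against Palais's conventions.
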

\begin{prf} First assume that $(a)$ holds.  Then $U/H^T$ is clearly compact. 
 We first note that by Lemma \ref{lem:quasi_sect}, we have that $((C_1,C_2))$ is relatively compact,  for all open
 relatively compact subsets $C_1,C_2$ of $U$. In the parlance of \cite{Pal}, this means that $C_1,C_2$ are {\em relatively thin}, and since this holds for all open, relatively compact neighborhoods of arbitrary points, it follows that each point in $U$ has a {\em small neighborhood} (again in the terminology of \cite{Pal}). Now implication $(1) \Rightarrow (3)$ of \cite[Theorem 1.2.9]{Pal} yields that the dual action on $U$ is proper, and we have concluded $(b)$. 
 
 Conversely, assume that $(b)$ holds. Pick a compact set $C \subset U$ with $H^T C = U$. Pick $W \supset C$ open, with 
 closure compactly contained in $U$. Then implication (3) $\Rightarrow$ (4) of \cite[Theorem 1.2.9]{Pal} implies that $((W,W))$ is relatively compact, i.e., $W$ is a topological quasi-section. 
 
 The Hausdorff property of $U/H^T$ follows by \cite[Theorem 1.2.9]{Pal}, (3) $\Rightarrow$ (2), and we already noted that $U/H^T$ is second countable. Metrizability now follows from Urysohn's metrization theorem. 
\end{prf}

\begin{rem}
 The results of this section provide a partial converse to Theorem \ref{thm:main}: Let $H$ be a $d$-dimensional admissible abelian matrix group, and assume that there exists an open conull $H$-invariant set $U \subset \mathcal{O}_d$ such that $U/H^T$ is compact. If in addition the dual action of $H$ on $U$ is proper, then Propositions \ref{prop:qs_impl_int} and \ref{prop:proper} imply that  $\pi$ is integrable.
 
 It is currently unknown whether the gap between this statement and Theorem \ref{thm:main} can actually be closed: Proposition \ref{33} (b) below presents a matrix group $H$ for which the dual action has a conull open compact subset of the orbit space, but the properness condition is violated. For this example, the question of integrability of the associated quasi-regular representation is currently open. 
\end{rem}


\section{Proof of Theorem \ref{thm:main}}

\label{sect:prf_thm_main}

Throughout this section, $H$ denotes an abelian matrix group
fulfilling the conditions of Theorem \ref{thm:main}. In particular, the dual
action of $H$ is free almost everywhere, and thus $d = {\rm dim}(H)
\le n$. Furthermore, $H$ is connected and all elements $h \in H$
have positive spectrum.

Let us first comment on the proof strategy: The chief
representation-theoretic tool employed in \cite{GKT,KT2} was the
Mackey machine. However, while there are useful results providing
topological information (such as continuity of induction), in general the
Mackey machine only yields a full description of the dual {\em as a
set}, and comprehensive information on the topological structure is not easily obtained this way.

For this reason we choose an alternative approach, which is
available because the semidirect products under consideration turn
out to be {\em exponential}. In fact, $G$ is a completely solvable
Lie group (by Lemma \ref{lem:G_cs} below). Thus $\widehat{G}$ is
homeomorphic to the coadjoint orbit space of $G$: Denote by
$\mathfrak{g}$ the Lie algebra of $G$, and by $\mathfrak{g}^*$ its
dual. $G$ acts on $\mathfrak{g}^*$ by the coadjoint representation
${\rm Ad}^*$, and there exists a canonical bijection $\kappa:
\mathfrak{g}^*/{\rm Ad}^*(G) \to \widehat{G}$, the Kirillov-Bernat correspondence. Furthermore, the
bijection is in fact a homeomorphism, when we endow the orbit space
with the quotient topology; see \cite{LeLu} for details. This
observation allows one to employ the methods developed by Currey and
Penney to derive sufficient information on the coadjoint orbit space
for the proof of Theorem \ref{thm:main}. A further important ingredient of the proof
is the close relationship between dual and coadjoint orbits.

\subsection{Adjoint and coadjoint representation of $G$}

We identify elements of the Lie algebra $\mathfrak{g}$ with pairs
$(v,X)^T \in \mathbb{R}^n \times \mathfrak{h}$.
Accordingly, $\mathfrak{g}^*$ is canonically isomorphic to
$\mathbb{R}^n \times \mathfrak{h}^*$; i.e. we have
\[
\langle (\xi,Y^*), (x,X) \rangle = \langle \xi, x \rangle + \langle
Y^*, X \rangle~.
 \] In order to use block matrix calculus, we will write elements of the
product spaces as column vectors, i.e. $\left( \begin{array}{c} w
\\ X
\end{array} \right) \in \mathbb{R}^n \times \mathfrak{h}$,$\left(
\begin{array}{c} \xi \\ X^* \end{array} \right) \in \mathbb{R}^n \times
\mathfrak{h}^\ast$, etc.

The following formulae are conveniently verified by realizing that
the mapping $(x,h) \mapsto \left( \begin{array}{cc} h & x \\ 0 & 1
\end{array} \right)$ is an isomorphism onto a closed group of
$(n+1)\times(n+1)$-matrices. Let ${\rm Ad}: G \to {\rm
GL}(\mathfrak{g})$ and ${\rm Ad}^*: G \to {\rm GL}(\mathfrak{g}^*)$
denote the adjoint and coadjoint representations of the group. Let
${\rm ad}$ and ${\rm ad}^*$ denote their Lie algebra counterparts.

We then obtain the following formulae, using that the adjoint
representation of $H$ is trivial:
\begin{equation} \label{eqn:Ad_rep}
{\rm Ad}(x,h) \left( \begin{array}{c} w \\ Y \end{array} \right)  =
( -{\rm Ad}(h)(Y) \cdot x + h w, {\rm Ad}(h) Y)^T = \left(
\begin{array}{cc} h & \varphi_{x} \\ 0 & 1
\end{array} \right) \cdot ~\left( \begin{array}{c} w \\ Y \end{array} \right),
\end{equation} with $\varphi_{x}: \mathfrak{h} \to
\mathbb{R}^n, Y \mapsto -Y \cdot x$. For the coadjoint
representation, we obtain
\begin{equation} \label{eqn:Ad*_rep}
{\rm Ad}^*(x,h)  \left( \begin{array}{c} \xi \\ Y^* \end{array}
\right)  = \left( \begin{array}{cc} h^{-T} & 0 \\
\varphi^*_{-h^{-1} x} & 1
\end{array} \right) \cdot \left( \begin{array}{c} \xi \\ Y^* \end{array}
\right) ~,
\end{equation}
with $\varphi^{*}_y : \mathbb{R}^n \to \mathfrak{h}^*$ the map dual
to $\varphi_y$.

We next turn to the Lie algebra representations. Here we obtain
\begin{equation} \label{eqn:ad_rep}
{\rm ad}(x,Z) \left( \begin{array}{c} w \\ Y \end{array} \right)  =
\left(
\begin{array}{cc} Z & \varphi_{x} \\ 0 & 0
\end{array} \right) \cdot ~\left( \begin{array}{c} w \\ Y \end{array} \right)~,
\end{equation}
and finally
\begin{equation} \label{eqn:ad*_rep} {\rm ad}^*(x,Z)  \left( \begin{array}{c} \xi \\ Y^*
\end{array}
\right)  = \left( \begin{array}{cc} -Z^T & 0 \\
\varphi_{-x}^* & 0
\end{array} \right) \cdot \left( \begin{array}{c} \xi \\ Y^* \end{array}
\right) ~.
\end{equation}
From these formulae, we can derive a number of simple but important
observations.
\begin{lemma} \label{lem:G_cs}
$G$ is completely solvable.
\end{lemma}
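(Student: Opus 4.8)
The plan is to establish complete solvability at the level of the Lie algebra $\mathfrak{g}$, relying on the standard characterization that a solvable Lie algebra is completely solvable exactly when ${\rm ad}(X)$ has real spectrum for every $X \in \mathfrak{g}$. Accordingly, I would split the argument into two parts: first that $\mathfrak{g}$ is solvable, and second that all eigenvalues of every operator ${\rm ad}(x,Z)$ are real.

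Solvability is immediate from the semidirect product structure. Since $H$ is abelian we have $[\mathfrak{h},\mathfrak{h}] = 0$, while $\mathbb{R}^n$ is an abelian ideal with $[\mathbb{R}^n,\mathfrak{h}] \subseteq \mathbb{R}^n$; hence $[\mathfrak{g},\mathfrak{g}] \subseteq \mathbb{R}^n \times \{0\}$, which is abelian, so the derived series terminates after two steps. For the spectral claim I would read off from formula \eqref{eqn:ad_rep} that, with respect to the decomposition $\mathfrak{g} = \mathbb{R}^n \times \mathfrak{h}$, the operator ${\rm ad}(x,Z)$ is block upper triangular with diagonal blocks $Z$ (on $\mathbb{R}^n$) and $0$ (on $\mathfrak{h}$). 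Consequently its spectrum is the spectrum of $Z$ together with the eigenvalue $0$, and the entire question reduces to showing that every $Z \in \mathfrak{h}$ has real spectrum.

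This reduction isolates the one genuinely nontrivial step, namely transferring the hypothesis ${\rm spec}(h) \subset \mathbb{R}^+$ from the group to the Lie algebra; the hard part is exactly this passage. It is handled by looking at one-parameter subgroups: fix $Z \in \mathfrak{h}$ and an eigenvalue $\lambda = a + ib$ of $Z$. Since $H$ is connected with Lie algebra $\mathfrak{h}$, the curve $\exp(tZ)$ lies in $H$ for all $t \in \mathbb{R}$, so by hypothesis ${\rm spec}(\exp(tZ)) \subset \mathbb{R}^+$. As $e^{t\lambda} = e^{ta}(\cos(tb) + i\sin(tb))$ is an eigenvalue of $\exp(tZ)$, requiring it to be positive real for every $t$ forces $\sin(tb) \equiv 0$, hence $b = 0$ and $\lambda \in \mathbb{R}$. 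Combining this with the solvability established above shows that $\mathfrak{g}$ is completely solvable, and therefore $G$ is a completely solvable Lie group.
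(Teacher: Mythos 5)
Your proof is correct and follows essentially the same route as the paper: read off from \eqref{eqn:ad_rep} that ${\rm ad}(x,Z)$ is block upper triangular with ${\rm spec}({\rm ad}(x,Z)) \subset \{0\} \cup {\rm spec}(Z)$, and deduce ${\rm spec}(Z) \subset \mathbb{R}$ from the positivity of the spectrum of the one-parameter group $\exp(tZ) \subset H$. You merely spell out two steps the paper leaves implicit (the solvability of $\mathfrak{g}$ and the passage from the group hypothesis to the Lie algebra via $e^{t\lambda} \in \mathbb{R}^+$), which is fine.
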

\begin{prf}
By Lemma \ref{eqn:ad_rep}, we find ${\rm spec}(ad(x,Z)) \subset \{0
\} \cup {\rm spec}(Z)$. But the assumption that ${\rm
spec}(\mathbb{R} Z) \subset \mathbb{R}^+$ implies that ${\rm
spec}(Z) \subset \mathbb{R}$, and thus ${\rm spec}(ad(x,Z)) \subset
\mathbb{R}$, for all $(x,\mathbb{Z}) \in \mathfrak{g}$. But this is
equivalent to complete solvability of $G$.
\end{prf}

\

Recall that the Kirillov-Bernat correspondence $\kappa$ is characterized as follows. For $(\xi,Y^*) \in \g^*$, suppose that $\p$ is a subalgebra of $\g$ such that (a) $[\p,\p] \subset \ker (\xi,Y^*)$, and (b) $\p$ is a maximal subalgebra with respect to the property (a). Then one has
\begin{equation}\label{dim}
\dim {\rm Ad}^*(G) (\xi,Y^*) = 2 \dim ( \g / \p).
\end{equation}
For each $(\xi,Y^*) \in\g^*$, there is a subalgebra $\p$ satisfying (a), (b), and also (c) $(\xi,Y^*) + \p^\perp \subset {\rm Ad}^*(G) (\xi,Y^*) $. (Here $\p^\perp = \{ (\zeta, X^*) \in \g^* : \p\subset \ker (\zeta, X^*) \}$. ) Then  $(\xi,Y^*) $ defines a unitary character $\chi  = \chi_{(\xi,Y^*) }$ on  $P= \exp \p$ by 
$$
\chi(\exp (x,X) ) = e^{i\langle (\xi,Y^*),(x,X)\rangle },
$$
and the unitary representation $\pi((\xi,Y^*),\p) := {\rm ind}_P^G(\chi)$ is irreducible. The unitary equi\-valence class $[\pi((\xi,Y^*),\p)] $ of $\pi((\xi,Y^*),\p)$ is independent of the choice of $\p$, as well as independent of the choice of $(\zeta, X^*)\in {\rm Ad}^*(G) (\xi,Y^*)$, and one has $\kappa({\rm Ad}^*(G) (\xi,Y^*))= [\pi((\xi,Y^*),\p)]$. 

The following lemma clarifies the relationship between coadjoint orbits, 
dual orbits, and the dual $\widehat G$, which is particularly transparent for the class of groups that
we study here. For the more general setting, the interested reader is referred
to \cite{AlAnGa}. The formulation of the lemma requires two more pieces of notation:
We let $\mathfrak{h}_\xi = \{ X \in \mathfrak{h} : X^T \xi = 0 \}$;
i.e., $\mathfrak{h}_\xi$ is the Lie algebra of the dual stabilizer
$H_\xi$. Its annihilator is given by $\mathfrak{h}_\xi^\bot = \{ Y^*
\in \mathfrak{h}^* : \langle Y^*, \mathfrak{h}_\xi \rangle = \{ 0 \}
\}$.
\begin{lemma} \label{main_lemma}
\begin{enumerate} 
\item[(a)] For all $(\xi,Y^*) \in \mathfrak{g}^*$ we have
\begin{equation} \label{eqn:coad_vs_dual_orbit}
 {\rm Ad}^*(G) (\xi,Y^*) = \left( H^T \xi \right) \times \left(Y^* +
 \mathfrak{h}_\xi^\bot \right) 
\end{equation}
\item[(b)] The projection map $P: \mathfrak{g}^* \to \mathbb{R}^n$
intertwines the coadjoint and dual actions. It therefore induces a
continuous map $\overline{P}: \mathfrak{g}^*/{\rm Ad}^*(G) \to
\mathbb{R}^n/H^T$.
\item[(c)] Let $\mathcal{O}^*_i \subset \mathfrak{g}^*$ denote the union of orbits of dimensions
$i$. Then $\overline{P}: \mathcal{O}^*_{2d}/{\rm Ad}^*(G) \to
\mathcal{O}_d/H^T$ is a homeomorphism.
\item[(d)] Let $m : \mathcal O_d / H^T \rightarrow \widehat G$ be the Mackey injection defined by $m(H^T\xi) =  [ {\rm ind}_{\RR^n}^G(\chi_\xi)]$. Then the restriction of  $m \circ \overline P$ to $\mathcal O^*_{2d} /{\rm Ad}^*(G)$ coincides with the restriction of  $ \kappa$ to $\mathcal O^*_{2d} /{\rm Ad}^*(G)$
\end{enumerate}
\end{lemma}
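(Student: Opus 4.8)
The plan is to prove (a) by direct computation from the explicit coadjoint formula \eqref{eqn:Ad*_rep}; parts (b)--(d) then follow with comparatively little extra work. Reading off \eqref{eqn:Ad*_rep}, the coadjoint action is ${\rm Ad}^*(x,h)(\xi,Y^*) = (h^{-T}\xi,\, Y^* + \varphi^*_{-h^{-1}x}(\xi))$. Since $H$ is a group, the first coordinates sweep out exactly the dual orbit $H^T\xi$. For the second coordinate I would fix $h$ and let $x$ range over $\mathbb{R}^n$; then $-h^{-1}x$ ranges over all of $\mathbb{R}^n$, so the second coordinate ranges over $Y^* + {\rm im}(\psi_\xi)$, where $\psi_\xi(z) = \varphi^*_z(\xi)$. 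The key point is that ${\rm im}(\psi_\xi) = \mathfrak{h}_\xi^\perp$, which I would deduce from the identity $\langle \psi_\xi(z), X\rangle = -\langle X^T\xi, z\rangle$: the inclusion ${\rm im}(\psi_\xi) \subseteq \mathfrak{h}_\xi^\perp$ is immediate, and equality follows from a rank count, since $\dim {\rm im}(\psi_\xi)$ and $\dim \mathfrak{h}_\xi^\perp$ both equal the rank of $X \mapsto X^T\xi$, whose kernel is $\mathfrak{h}_\xi$. To assemble the product form uniformly along the orbit I need $\mathfrak{h}_{h^{-T}\xi} = \mathfrak{h}_\xi$; this is exactly where abelianness enters, since triviality of ${\rm Ad}|_H$ gives $hX = Xh$ for $X \in \mathfrak{h}$, whence $X^T h^{-T}\xi = h^{-T} X^T \xi$ and the stabilizer algebra is constant on $H^T\xi$.

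Part (b) is then immediate: \eqref{eqn:Ad*_rep} shows $P({\rm Ad}^*(x,h)\,\cdot) = h^{-T} P(\cdot)$, so the linear, continuous projection $P$ is equivariant and descends to a continuous $\overline{P}$ by the universal property of the quotient topology. For (c) I would first use (a) to compute $\dim {\rm Ad}^*(G)(\xi,Y^*) = \dim(H^T\xi) + \dim \mathfrak{h}_\xi^\perp = 2\dim(H^T\xi)$, so that $\mathcal{O}^*_{2d} = P^{-1}(\mathcal{O}_d) = \mathcal{O}_d \times \mathfrak{h}^*$. On $\mathcal{O}_d$ the action is free, hence $\mathfrak{h}_\xi = 0$ and the coadjoint orbit is the full fiber $H^T\xi \times \mathfrak{h}^*$; therefore $\overline{P}$ restricts to a bijection onto $\mathcal{O}_d/H^T$. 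To upgrade this to a homeomorphism I would exhibit a continuous inverse: the section $\xi \mapsto (\xi,0)$ is continuous and, composed with the quotient map, is constant on $H^T$-orbits (as $(\xi,0)$ and $(h^{-T}\xi,0)$ share a coadjoint orbit), so it descends to a continuous map inverse to $\overline{P}$.

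For (d) the plan is to evaluate $\kappa$ on $\mathcal{O}^*_{2d}$ using the single convenient polarization $\mathfrak{p} = \mathbb{R}^n \times \{0\}$, the Lie algebra of the translation subgroup. As an abelian ideal, $\mathfrak{p}$ satisfies (a) trivially, and its dimension $n = \dim \mathfrak{g} - d$ is, by \eqref{dim}, precisely that of a maximal subordinate subalgebra, giving (b). The Pukánszky condition (c) is where part (a) does the work: here $\mathfrak{p}^\perp = \{0\} \times \mathfrak{h}^*$, so $(\xi,Y^*) + \mathfrak{p}^\perp = \{\xi\} \times \mathfrak{h}^*$, which lies in the orbit $H^T\xi \times \mathfrak{h}^*$ found in (a). Restricting the character $\chi_{(\xi,Y^*)}$ to $P = \exp \mathfrak{p} = \mathbb{R}^n$ gives $x \mapsto e^{i\langle \xi, x\rangle} = \chi_\xi(x)$, so $\pi((\xi,Y^*),\mathfrak{p}) = {\rm ind}_{\mathbb{R}^n}^G(\chi_\xi)$. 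Hence $\kappa({\rm Ad}^*(G)(\xi,Y^*)) = [{\rm ind}_{\mathbb{R}^n}^G \chi_\xi] = m(H^T\xi) = m(\overline{P}({\rm Ad}^*(G)(\xi,Y^*)))$, as claimed.

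I expect the main obstacle to be the bookkeeping in part (a): correctly identifying ${\rm im}(\psi_\xi)$ with $\mathfrak{h}_\xi^\perp$ and, more subtly, verifying that the stabilizer algebra $\mathfrak{h}_\xi$ is constant along the dual orbit, which is the one place where the abelian hypothesis is indispensable. Once (a) is available the remaining parts are essentially formal: (b) and (c) are quotient-topology arguments, and (d) reduces to checking that the translation subalgebra is a Pukánszky polarization, for which (a) supplies the only nontrivial input.
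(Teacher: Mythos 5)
Your argument is correct, and its overall architecture matches the paper's: part (b) is the same triviality, part (c) rests on the same dimension-plus-freeness observation, and part (d) is verbatim the paper's argument (the translation subalgebra $\mathbb{R}^n\times\{0\}$ is a Puk\'anszky polarization at points of $\mathcal{O}^*_{2d}$, and the restricted character is $\chi_\xi$). The one genuine divergence is in part (a). The paper first introduces the subalgebra $\mathbb{R}^n\times\mathfrak{h}_\xi$, checks it is a maximal subordinate subalgebra satisfying the Puk\'anszky condition, reads off $\dim {\rm Ad}^*(G)(\xi,Y^*)=2\dim H^T\xi$ from the formula (\ref{dim}), and then deduces (\ref{eqn:coad_vs_dual_orbit}) from the inclusion ${\rm Ad}^*(G)(\xi,Y^*)\subset (H^T\xi)\times(Y^*+\mathfrak{h}_\xi^\perp)$ together with equality of dimensions. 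You instead compute the orbit outright from (\ref{eqn:Ad*_rep}) as $(H^T\xi)\times(Y^*+{\rm im}\,\psi_\xi)$ (the product structure falls out because $\varphi^*$ is always applied to the base point $\xi$, and $x\mapsto -h^{-1}x$ is onto for each fixed $h$) and then identify ${\rm im}\,\psi_\xi=\mathfrak{h}_\xi^\perp$ by the rank/annihilator argument. Your route is more elementary and yields the set equality directly rather than via an inclusion of equidimensional connected sets; the paper's route has the advantage that the polarization data it sets up in (a) is exactly what gets reused in (d), whereas you re-derive the maximality of $\mathbb{R}^n$ in (d) by a separate codimension argument. Two small points in your favor: your explicit continuous inverse $H^T\xi\mapsto [(\xi,0)]$ in (c) fills in a step the paper only asserts, and your check that $\mathfrak{h}_{h^{-T}\xi}=\mathfrak{h}_\xi$ along the orbit, while not strictly needed for the orbit of the fixed base point, is a legitimate consistency verification of the right-hand side of (\ref{eqn:coad_vs_dual_orbit}).
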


\begin{prf} To verify  (\ref{eqn:coad_vs_dual_orbit}), put $\mathfrak p_{(\xi ,Y^*)} = \mathbb R^n \times \mathfrak h_\xi$; using (\ref{eqn:Ad*_rep}) 
we see that  $\mathfrak p_{(\xi ,Y^*)}$ is a subalgebra satisfying 
conditions (a), (b), and (c) with respect to $(\xi,Y^*)$. Hence 
$$\dim  {\rm Ad}^*(G) (\xi,Y^*)  =2 \dim  \mathfrak g  / \mathfrak p_\xi 
= 2 \dim \mathfrak h / \mathfrak h_\xi = 2 \dim H^T\xi.
$$
 A direct calculation again using 
(\ref{eqn:Ad*_rep})  shows that 
$$
 {\rm Ad}^*(G) (\xi,Y^*) \subset \left( H^T \xi \right) \times \left(Y^* +
 \mathfrak{h}_\xi^\bot \right),
 $$
 and it is clear that $\dim \left( H^T \xi \right) \times \left(Y^* +
 \mathfrak{h}_\xi^\bot \right) = 2 \dim H^T \xi$, so  (\ref{eqn:coad_vs_dual_orbit}) follows.

For part (c), we have seen that
the coadjoint orbit dimension of $(\xi, Y^*)$ is twice the dual
orbit dimension of $\xi$. In particular, $(\xi,Y^*) \in
\mathcal{O}_{2d}^*$ iff $\xi \in \mathcal{O}_d$. Furthermore,  $H$
acts freely on $\mathcal{O}_d$, which for $\xi \in \mathcal O_d$ implies
$\mathfrak{h}_\xi^\bot = \mathfrak{h}^*$, and thus ${\rm
Ad}^*(G)(\xi,Y^*) = (H^T \xi) \times \mathfrak{h}^*$. Hence
$\overline{P}$ is bijective on $\mathcal{O}_{2d}^*/{\rm Ad}^*(G)$,
and thus a homeomorphism.

Finally, the above shows that for $(\xi,Y^*) \in \mathcal O^*_{2d}$, the subalgebra $\p = \RR^n$ satisfies conditions (a), (b) and (c) for the Kirillov-Bernat correspondence. 
The characters on $\RR^n$ defined by $(\xi,Y^*)$ and $\xi$ are obviously the same, 
giving the equality of the induced representations 
$\pi((\xi,Y^*),\p) = {\rm ind}_{\RR^n}^G (\chi_\xi)$ so
$$
\kappa( {\rm Ad}^*(G) (\xi,Y^*) ) =[\pi((\xi,Y^*),\p)]=  [ {\rm ind}_{\RR^n}^G (\chi_\xi)] = m(H\cdot \xi).
$$
 \end{prf}
 
\subsection{A layering of $\mathfrak{g}^*$}

 The reduction to dual orbits of maximal dimension will rely on a
corresponding result for the space of coadjoint orbits. The
following lemma contains the vital parts of the proof. For the most
part, it is a rephrasing of the main result of \cite{CP}, adapted to
the our specific needs.
\begin{lemma} \label{lem:layering}
There exists a decomposition of $\mathfrak{g}^*$ into ${\rm
Ad}^*(G)$-invariant, disjoint sets $\left( \Omega_j^*
\right)_{j=1,\ldots,p}$ with the following properties:
\begin{enumerate}
\item[(i)] For all $1 \le k \le p$: $\bigcup_{j \le k} \Omega_j^*$
is open.
\item[(ii)] For all $1 \le j \le p$: All coadjoint orbits in
$\Omega_j^*$ have the same dimension $d_j$. The sequence
$d_1,\ldots,d_p$ is nonincreasing.
\item[(iii)] For all $1 \le j \le p$: $\Omega_j^*/{\rm Ad}^*(G)$
is Hausdorff. \item[(iv)] For all $1 \le j \le p$ we have that
$\Omega_j^* = P^{-1}(P(\Omega_j^*))$.
\item[(v)] Whenever $d_j < 2d$, all connected
components of $\Omega_j^*$ are noncompact.
\end{enumerate}
\end{lemma}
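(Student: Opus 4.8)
The plan is to realize the desired decomposition as the Pukánszky jump-index stratification of $\mathfrak{g}^*$ furnished by the main result of \cite{CP}, and then to read off the two extra properties (iv) and (v) from the very rigid structure of the coadjoint action. First I would fix a Jordan--Hölder sequence of \emph{ideals} $0=\mathfrak{g}_0\subset\mathfrak{g}_1\subset\cdots\subset\mathfrak{g}_{n+d}=\mathfrak{g}$ refining $0\subset\mathbb{R}^n\subset\mathfrak{g}$. Such a sequence exists: since $H$ is abelian with real (indeed positive) spectrum, the elements of $\mathfrak{h}$ are simultaneously triangularizable over $\mathbb{R}$, giving a full flag of $\mathfrak{h}$-invariant subspaces of $\mathbb{R}^n$; each such subspace is an ideal of $\mathfrak{g}$ because $[\mathbb{R}^n,\mathbb{R}^n]=0$, and each $\mathbb{R}^n\oplus W$ with $W\subset\mathfrak{h}$ is an ideal because $\mathfrak{h}$ is abelian and $[\mathbb{R}^n,\mathfrak{h}]\subset\mathbb{R}^n$. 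Complete solvability (Lemma \ref{lem:G_cs}) guarantees the quotients are one-dimensional.

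Relative to this flag, I would introduce the coadjoint stabilizer $\mathfrak{g}(\ell)=\{X\in\mathfrak{g}:\langle\ell,[X,\mathfrak{g}]\rangle=0\}$ and the jump index set $e(\ell)=\{\,j:\mathfrak{g}_j\not\subseteq\mathfrak{g}_{j-1}+\mathfrak{g}(\ell)\,\}$, and set $\Omega^*_e=\{\ell:e(\ell)=e\}$. Because every $\mathfrak{g}_j$ is an ideal, hence ${\rm Ad}(G)$-invariant, and $\mathfrak{g}({\rm Ad}^*(g)\ell)={\rm Ad}(g)\mathfrak{g}(\ell)$, the set $e(\ell)$ is constant on coadjoint orbits; thus each $\Omega^*_e$ is ${\rm Ad}^*(G)$-invariant, and the orbits it contains all have dimension $|e|$. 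These are precisely the layers of \cite{CP}, from which I would quote the remaining structure: an ordering of the finitely many index sets for which the initial unions are (Zariski-)open, together with a cross-section exhibiting each $\Omega^*_e/{\rm Ad}^*(G)$ as Hausdorff. Reindexing as $\Omega^*_1,\dots,\Omega^*_p$ along this order, with $d_j=|e_j|$, and noting that each dimension block $\{\ell:|e(\ell)|\ge 2i\}$ is already open, I can arrange the $d_j$ nonincreasing while keeping the initial unions open. This yields (i), (ii), (iii).

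The heart of the argument is (iv). The formula (\ref{eqn:ad*_rep}) gives ${\rm ad}^*(x,Z)(\xi,Y^*)=(-Z^T\xi,\ \varphi^*_{-x}\xi)$, which is manifestly \emph{independent of $Y^*$}. Consequently the orbit dimension, the stabilizer $\mathfrak{g}(\ell)$, and indeed the entire alternating form $B_\ell(\cdot,\cdot)=\langle\ell,[\cdot,\cdot]\rangle$ depend only on $\xi=P(\ell)$ (this is also consistent with the product description in Lemma \ref{main_lemma}(a)). Since both the jump-index layering and the cross-section data of \cite{CP} are built solely from $B_\ell$ relative to the fixed flag, the layer containing $\ell$ is determined by $\xi$ alone. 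Hence $\Omega^*_j=P^{-1}(P(\Omega^*_j))$, i.e.\ $\Omega^*_j=\Omega_j\times\mathfrak{h}^*$ with $\Omega_j=P(\Omega^*_j)\subset\mathbb{R}^n$, which is exactly (iv).

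Property (v) is then immediate from this product form: if $B$ is connected then the connected components of $\Omega_j\times\mathfrak{h}^*$ are the sets $C\times\mathfrak{h}^*$ for $C$ a component of $\Omega_j$, and since admissibility forces $\det|_H\not\equiv 1$ and hence $d=\dim H\ge 1$, the factor $\mathfrak{h}^*\cong\mathbb{R}^d$ is noncompact; thus every component of $\Omega^*_j$ is noncompact (in particular whenever $d_j<2d$). The main obstacle, as I see it, is not any single computation but the interface with \cite{CP}: one must confirm that the cited layering and its ordering are governed only by the form $B_\ell$ and the flag, so that the $Y^*$-independence observed from (\ref{eqn:ad*_rep}) genuinely forces the $P$-saturation (iv), and that the ordering can be taken compatible with nonincreasing orbit dimension. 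Pinning down these compatibility points against the precise statements of \cite{CP} is where the real care is required.
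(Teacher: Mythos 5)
Your treatment of (i)--(iv) follows essentially the same route as the paper: the same Jordan--H\"older flag built from simultaneous triangularization of $\mathfrak{h}$, the same appeal to \cite{CP} for the ordered layering with Hausdorff cross-sections, and the same key observation for (iv), namely that (\ref{eqn:ad*_rep}) is independent of $Y^*$, so the layering data depend only on $\xi = P(\ell)$. (One small interface point: the layers of \cite{CP} are indexed by a jump set \emph{together with} an ordering scheme $\alpha$, not by the jump set alone, so your $\Omega^*_e$ are a priori the coarser Pedersen layers; the $Y^*$-independence argument covers both pieces of data, but the cross-sections you need for (iii) are attached to the finer layers.)

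The genuine gap is in (v). Your product argument $\Omega_j^* = P(\Omega_j^*) \times \mathfrak{h}^*$ does prove that every connected component of $\Omega_j^*$, as a subset of $\mathfrak{g}^*$, is noncompact --- but it proves this for \emph{every} layer, including $d_j = 2d$, which should tell you that this cannot be the content of (v). What the lemma is actually used for (see the proof of Theorem \ref{thm:main_coadj}) is noncompactness of the connected components of the \emph{orbit space} $\Omega_j^*/{\rm Ad}^*(G)$, and this does not follow from noncompactness upstairs: a noncompact invariant set can perfectly well have a compact quotient, and indeed for the top layer it often does --- that is the whole point of the paper. The hypothesis $d_j < 2d$, which your argument never uses, is exactly what saves the day in the quotient: by (\ref{eqn:coad_vs_dual_orbit}) the orbit of $(\xi,Y^*)$ is $(H^T\xi) \times (Y^* + \mathfrak{h}_\xi^\perp)$, and $d_j < 2d$ forces ${\rm codim}\,(\mathfrak{h}_\xi^\perp) \ge 1$, so the fiber $\overline{P}^{-1}(\{H^T\xi\})$ inside $\Omega_j^*/{\rm Ad}^*(G)$ is homeomorphic to a complement $V$ of $\mathfrak{h}_\xi^\perp$ in $\mathfrak{h}^*$, i.e.\ to $\mathbb{R}^{\ge 1}$. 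This is a closed, connected, noncompact subset of the quotient containing the given orbit, which forces the component of $\Omega_j^*/{\rm Ad}^*(G)$ through that orbit to be noncompact. Without this step the iterative ``peeling off'' of non-maximal layers in Theorem \ref{thm:main_coadj} does not go through.
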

\begin{prf}
The proof relies on the explicit construction of cross-sections for
the coadjoint orbits of $G$, as described in \cite{CP}. The starting
point for this construction is a Jordan-H\"older basis of
$\mathfrak{g}$. In our setting, this can be obtained with
particular ease: By \cite[Theorem 4]{BCFM}, there exists a basis of
$\mathbb{R}^n$ with respect to which all elements of $H$ are upper
triangular. We denote this basis by $X_1,\ldots,X_n$, and complement
it by a basis $X_{n+1},\ldots,X_{n+d}$ of $\mathfrak{h}$. It is then
straightforward to check that
\[
\mathfrak{g}_j = {\rm span}(X_i: i \le j)~,
\] defines an increasing sequence of ideals $\mathfrak{g}_j$ in $\mathfrak{g}$. Hence
$X_1,\ldots,X_{d+n}$ is the desired Jordan-H\"older basis.

Consequently, the dual basis $X_1^*, \ldots,X_{d+n}^* \subset
\mathfrak{g}^*$ is a Jordan-H\"older basis for the dual, with the
usual, implicit convention that this time, the subspaces
\[
V_j = {\rm span}(X^*_i : i > j)
\] are submodules. It follows that $G$ acts on $\mathfrak{g}^*/V_j$,
and the construction of the layering will be obtained from data
associated to the actions of $G$ on the various quotient spaces.

Given $(\xi,Y^*)^T \in \mathfrak{g}^*$ and $1 \le j \le n+d$, we let
\[
L_j(\xi,Y^*) = \{ (w,X) \in \mathfrak{g} : {\rm ad}^*(w,X)
(\xi,Y^*)^T \in V_j \}~.
\]
By definition, $L_j(\xi,Y^*)$ is therefore the Lie algebra of the
fixed group of $(\xi, Y^*)^T + V_j$ with respect to the action of
$G$.
%
%
%
%
%

Now, given $L_j(\xi,Y^*)$, for $j=1,\ldots,n+d$, R. Penney and the first author describe
 in \cite{CP} how to associate an index set 
$\mathfrak{j}(\xi,Y^*)$ and an ordering scheme $\alpha(\xi,Y^*)$
for $\mathfrak{j}(\xi,Y^*)$,
in such a way that the
layering is obtained by picking suitable tuples
$\mathfrak{j},\alpha$ and letting
\begin{equation} \label{eqn:layering}
\Omega_{\mathfrak{j},\alpha} = \{ (\xi,Y^*):
\mathfrak{j}(\xi,Y^*) = \mathfrak{j}, \alpha(\xi,Y^*) =
\alpha \}~. \end{equation} Here
$\mathfrak{j}(\xi,Y^*)$ is the set of {\em jump indices} for $(\xi,Y^*)$, indicating
when the dimension of ${\rm Ad}^*(G)(\xi,Y^*)+V_{j+1}$ exceeds that
of ${\rm Ad}^*(G)(\xi,Y^*)+V_{j}$. Hence, the cardinality of the
jump set is equal to orbit dimension.

In addition, one can number the nonempty layers as
$\Omega_1,\ldots,\Omega_p$ in such a way that property (i) is
guaranteed (see \cite[Main Theorem (c)]{CP}). In each layer, the
dimensions in the coadjoint orbits is constant (see \cite[Main
Theorem (b)]{CP}). The property that the orbit dimension is
nondecreasing follows since the layering
is a refinement of the layering described in \cite{Pe}, as pointed
out on \cite[p. 311]{CP}. One can check that the coarser
layering has this property by consulting the definition of the total
ordering given directly after \cite[Corollary 1.1.2]{Pe}.

Furthermore, there exists for each layer $\Omega_i$ a cross-section
$\Sigma_i \subset \Omega_i$  and a diffeomorphism $\Theta_i :
\Sigma_i \times W_i \to \Omega$ (with a suitably chosen vector space
$W_i$) such that for each $(\xi,Y^*) \in \Sigma$ we have
$\Theta_i((\xi,Y^*),W_i) = {\rm Ad}^*(G) (\xi,Y^*)$ \cite[Main
Theorem (f)]{CP}. But this easily implies that $\Omega_i/{\rm
Ad}^*(G)$ is homeomorphic to $\Sigma_i$, in particular, it is
Hausdorff. This takes care of $(iii)$.

For the proof of part (iv) we observe that, since ${\rm ad}^*(x,Z)
(\xi^*, Y^*) = {\rm ad}^*(x,Z)(\xi^*,0)$ (see (\ref{eqn:ad*_rep})),
it follows that $L_j(\xi,Y^*) = L_j(\xi,0)$, for all $1 \le j \le
n+d$. But this implies $\mathfrak{j}(\xi,Y^*) =
\mathfrak{j}(\xi,0)$, and the same for $\alpha$, and (iv) follows.

Now (v) is an easy consequence of (iv) and of
(\ref{eqn:coad_vs_dual_orbit}): Assume that $(\xi,Y^*)^T \in
\Omega_j$, and $d_j < 2d$. Then $d_j = 2k$ with $k< d$, and thus
${\rm codim}(\mathfrak{h}_\xi^\bot)\ge 1$. Let $V \subset
\mathfrak{h}^*$ denote a complement of $\mathfrak{h}_\xi^\bot$. Then
the map
\[
V \ni Z^* \mapsto (H^T \xi) \times \left( Z^* +
\mathfrak{h}_\xi^\bot \right)
\] defines a homeomorphism onto the closed subset $\overline{P}^{-1}(\{ H^T \xi \})$ of
$\Omega_i^* /{\rm Ad}^*(G)$. This set contains the coadjoint orbit
of $(\xi,Y^*)^T$, and it is connected and noncompact. Hence the
connected component in $\Omega_i^* /{\rm Ad}^*(G)$ containing
that orbit is not compact.
\end{prf}

The properties of the lemma will allow us 
to``peel off" all layers in
the coadjoint orbit space that consist of orbits of less than
maximal dimension.
\begin{thm} \label{thm:main_coadj}
Let $C \subset \mathfrak{g}^*/{\rm Ad}^*(G)$ denote an open compact
set. Then $C \subset \mathcal{O}^*_{2d}$.
\end{thm}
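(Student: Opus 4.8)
The plan is to argue by contradiction, peeling off the layers of Lemma~\ref{lem:layering} and isolating the ``most singular'' one that $C$ can meet. Write $Q = \mathfrak{g}^*/{\rm Ad}^*(G)$ and keep in mind that the quotient map sends open ${\rm Ad}^*(G)$-invariant (saturated) sets to open sets. By Lemma~\ref{main_lemma} the coadjoint orbit dimension of $(\xi,Y^*)$ equals $2\dim H^T\xi \le 2d$, with equality exactly on $\mathcal{O}_d$; since $H$ is admissible, $\mathcal{O}_d \neq \emptyset$, so the maximal orbit dimension is $2d$. By property (ii) the dimensions $d_1 \ge \cdots \ge d_p$ are nonincreasing, so there is an index $j_0$ with $d_j = 2d$ for $j \le j_0$ and $d_j < 2d$ for $j > j_0$, and consequently $\mathcal{O}^*_{2d} = \bigcup_{j \le j_0}\Omega_j^*$. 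First I would record that, by property (i), this set is open and saturated, hence $\mathcal{O}^*_{2d}/{\rm Ad}^*(G)$ is open in $Q$.

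Now suppose, for contradiction, that $C \not\subset \mathcal{O}^*_{2d}$, so that $C$ meets some layer $\Omega_j^*/{\rm Ad}^*(G)$ with $j > j_0$. Since there are only finitely many layers, I pick the \emph{largest} index $M$ for which $C \cap (\Omega_M^*/{\rm Ad}^*(G)) \neq \emptyset$; as the dimensions are nonincreasing and $M > j_0$, we have $d_M < 2d$. The decisive step is that this choice makes $A := C \cap (\Omega_M^*/{\rm Ad}^*(G))$ compact: by maximality of $M$ the set $C$ misses every layer of index $> M$, so $A = C \setminus \bigl( \bigcup_{j<M}\Omega_j^*/{\rm Ad}^*(G) \bigr)$. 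The set being removed is the image of the open saturated set $\bigcup_{j<M}\Omega_j^*$, hence open in $Q$ by property (i), so $A$ is closed in the compact set $C$ and therefore compact. At the same time $A$ is visibly open in $\Omega_M^*/{\rm Ad}^*(G)$ because $C$ is open.

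Since $\Omega_M^*/{\rm Ad}^*(G)$ is Hausdorff by property (iii), the compact set $A$ is closed there as well, so $A$ is a nonempty \emph{clopen} subset. Hence $A$ contains a full connected component $\mathcal{C}$ of $\Omega_M^*/{\rm Ad}^*(G)$ (a connected set meeting a clopen set is contained in it); and being closed in the compact set $A$, this component $\mathcal{C}$ is compact. But $d_M < 2d$, so the noncompactness statement for components of $\Omega_M^*/{\rm Ad}^*(G)$ established in the proof of property (v) of Lemma~\ref{lem:layering} is contradicted. Therefore $C \subset \mathcal{O}^*_{2d}$.

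The main obstacle I anticipate is purely topological bookkeeping in the non-Hausdorff ambient space $Q$, where ``compact'' does not entail ``closed''. This is precisely why I select the \emph{bottom-most} (largest-index, smallest-dimension) layer that $C$ meets: only then is $A$ realized as $C$ minus an open set, which is what turns it into a genuinely compact set rather than merely a compact-in-a-larger-open-piece set. It is equally essential that the individual layer quotients are Hausdorff (property (iii)), since Hausdorffness is exactly what is needed to upgrade compactness of $A$ to closedness inside $\Omega_M^*/{\rm Ad}^*(G)$, and hence to extract a compact connected component contradicting property (v).
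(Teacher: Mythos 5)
Your proof is correct and follows essentially the same strategy as the paper's: both use the layering of Lemma~\ref{lem:layering}, the openness of the unions $\bigcup_{j\le k}\Omega_j^*$ to realize the intersection of $C$ with the lowest-dimensional layer it meets as a compact set, the Hausdorffness of each layer quotient to upgrade this to a clopen set, and the noncompactness of connected components to force it to be empty. The only difference is organizational --- you select the maximal-index layer met by $C$ in one step, whereas the paper peels off the layers iteratively from $\Omega_p^*$ upward --- and this changes nothing of substance.
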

\begin{proof}
Let $\Omega_1^*,\ldots,\Omega_p^*$ denote the layering from Lemma
\ref{lem:layering}. Note that $\Omega_p^*$ consists of the
zero-dimensional orbits. Suppose that $C \subset \mathfrak{g}^*/{\rm
Ad}^*(G)$ is compact and open. Then $C_p = C \cap \Omega_p^*/{\rm
Ad}^*(G)$ is a compact subset of $\Omega_p^*/{\rm Ad}^*(G)$, which
is Hausdorff by part (iii) of Lemma \ref{lem:layering}. Furthermore,
it is open, hence it contains every connected components of
$\Omega_p^*/{\rm Ad}^*(G)$ it intersects. But these are noncompact,
by Lemma \ref{lem:layering}.(v). Thus $C_p \cap \Omega_p^*/{\rm
Ad}^*(G)$ is empty.

If $d_{p-1}<d$, we can repeat the argument, using that
$\Omega_{p-1}^* \subset \bigcup_{j \le n-1} \Omega_j^*$ is closed,
to obtain that $C \cap \Omega_{p-1}^*/{\rm Ad}^*(G) = \emptyset$. Thus
one concludes iteratively $C \cap \Omega_{j}^*/{\rm Ad}^*(G) =
\emptyset$, until $d_j = d$.
\end{proof}

%
%
%
%
%
%
%
%

\subsection{Proof of Theorem \ref{thm:main}}

Suppose that $\eta \in {\rm L}^2(\RR^n)$ is an admissible vector with $F
= \Delta_G^{-1/2} V_\eta \eta \in {\rm L}^1(G)$. Denote by $\widehat{F}$ the group
Fourier transform. Then ${\rm supp}(\widehat F) \subset \widehat{G}$ is a
compact open set. Let $\kappa: \mathfrak{g}^*/{\rm Ad}^*(G) \to
\widehat{G}$ denote the Kirillov-Bernat correspondence. It is a
homeomorphism \cite{LeLu}, and thus Theorem \ref{thm:main_coadj} implies
that ${\rm supp}(\widehat F) = \kappa(C)$ for some compact open subset $C\subset
\mathcal{O}^*_{2d}/{\rm Ad}^*(G))$. By Lemma \ref{main_lemma}, 
 $\overline{P}(C)$ is a compact open subset of $\mathcal O_d / H^T$. 

Now let $\mathcal H = V_\eta(L^2(\RR^n)) \subset L^2(G)$. 
The measure class on the Borel space $\widehat G$ for the 
quasiregular representation $\pi$ is supported on 
$\eO_{2d}^* / \text{Ad}^*(G) = \eO_d/H^T$, and is the 
push forward of the Plancherel measure on 
$\widehat \R^n$ (see for example \cite[Theorem 7.1]{Lip}). 
Hence the restriction of the regular representation of $G$ to $\mathcal H$
is quasiequivalent with the regular representation.
Since convolution by $F$ is projection of $L^2(G)$  onto $\mathcal H$, then 
the support of $\widehat F$ must be conull. This entails that $C$ is conull, and hence
$\overline{P}(C) \subset \mathbb{R}^n$ must be conull as well. 

%
%
%
%


\section{The $3\times 3$ case.} 

In this section we specialize to the case where $n = 3$, with the aim of proving a characterization. The proposition lists each connected abelian $H< GL(3,\R)$ for which $\R^3 / H^T$ contains a compact  open subset. Of particular interest is case (b) for $\alpha \beta \not= 0$, providing an example of a conull compact open subset of the orbit space where Proposition \ref{prop:qs_impl_int} is not applicable. Even though this problem seems amenable to direct calculations, the question whether an integrable admissible vector exists for this case is currently open. 

\begin{prop} \label{33} Let $\h$ be a commutative Lie subalgebra of $\mathfrak{gl}(3,\RR)$ having dimension $d \ge 2$, and suppose that there is a basis of $\R^3$ for which one of the following cases holds.

\medskip
\noindent
%
%
{\rm (a)} $\h^T = (A,B)_\R$ where 
 $$
A = \left[ \begin{matrix} 1 &\! -\alpha& \\ \alpha&\ 1&\\&&0\end{matrix} \right], \ \ \ \text{and} 
 \ \ \ B = \left[\begin{matrix} 0&& \\ &0& \\&&1\end{matrix}\right].
 $$
 with $\alpha \ne 0$. 

 \medskip
 \noindent
{\rm (b)} $\h^T = (A,B)_\R$ where 
$$
A = \left[ \begin{matrix} 1 && \\ &0&\\&&\alpha\end{matrix} \right] \ \ \ \text{and} \ \ \ 
B = \left[ \begin{matrix} 0 && \\ &1&\\&&\b\end{matrix} \right].
$$
with at least one of $\alpha $ or $\b$ positive. 

\medskip
 \noindent
{\rm (c)} $\h^T = (A,X, Z)_\R$ where 
$$
A = \left[ \begin{matrix} 1 && \\ &1&\\&&1 \end{matrix} \right] , \ \ \ 
X = \left[ \begin{matrix} 0 && \\ 1&0&\\&&0\end{matrix} \right], \ \ \ \text{and} \ \ \ 
Y = \left[ \begin{matrix} 0 && \\ &0&\\1&&0\end{matrix} \right].
$$

 \medskip
 \noindent
{\rm (d)} $\h^T = (A,B, X)_\R$ where 
$$
A = \left[ \begin{matrix} 1 && \\ &1&\\&&0 \end{matrix} \right] , \ \ \ 
B = \left[ \begin{matrix} 0 && \\ &0&\\&&1\end{matrix} \right], \ \ \ \text{and} \ \ \ 
X = \left[ \begin{matrix} 0 && \\ 1&0&\\&&0\end{matrix} \right].
$$

 \medskip
 \noindent
{\rm (e)} $\h^T = (A,B, C)_\R$ where 
$$
A = \left[ \begin{matrix} 1 && \\ &0&\\&&0\end{matrix} \right], \ \ \ 
B = \left[ \begin{matrix} 0 && \\ &1&\\&&0\end{matrix} \right],  \ \ \ \text{and} \ \ \ 
C = \left[ \begin{matrix} 0 && \\ &0&\\&&1\end{matrix} \right].
$$

\medskip
\noindent
Then $\eO_d / H^T$ is compact.  Otherwise, $\eO_d / H^T$ contains no conull, compact open subset.

In case $(b)$, there exists a topological section for $\eO_d/H^T$ iff $\alpha \beta = 0$, whereas for $\alpha \beta \not= 0$ there does not even exist a topological quasi-section. In all remaining cases, $\eO_2/H^T$ admits a topological section. 

If none of the above cases applies, then $\pi$ is not integrable. Conversely, $\pi$ is integrable in all listed cases possessing a topological section. 

\end{prop}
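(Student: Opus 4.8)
The plan is to combine an explicit conjugacy classification of the relevant commutative subalgebras $\h\subset\mathfrak{gl}(3,\R)$ with the orbit-space criteria of Sections~2 and~3. First I would decompose $\R^3$ into joint $H^T$-invariant subspaces: a commutative family splits $\R^3$ over $\R$ into indecomposable blocks, each a scaling line, a scaling--rotation plane, or a shear-coupled Jordan block, and a generic element of $\h$ carries a weight on each block. Imposing $\dim\h=d\ge2$ together with a generic orbit of full dimension $d$ (so that $\eO_d$ is nonempty and conull) cuts the possible normal forms down to a finite catalogue, and a block-by-block analysis of cocompactness of the top-dimensional stratum should isolate exactly the configurations (a)--(e): among the forms with real spectrum, (c), (d), (e) are distinguished by the number of independent scalings against the number of shear-coupled directions, while (b) is the unique two-dimensional diagonal family whose weights compactify the quotient, and (a) is the only form carrying a rotation tied to a radial scaling. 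Verifying that this catalogue is exhaustive, and that every configuration outside it possesses a continuous real orbit invariant of noncompact range on a conull open set --- so that $\eO_d/H^T$ has no conull compact open subset --- is the most laborious part of the argument.

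For the listed cases I would describe $\eO_d/H^T$ explicitly through a complete set of continuous invariants and then read off compactness; note that in every listed case $\eO_d$ is open and conull, being $\R^3$ minus a finite union of coordinate subspaces. In cases (c), (d), (e) the invariants are finitely many signs, so $\eO_d/H^T$ is a finite set; in case (a), passing to polar coordinates in the plane, the invariant $\theta-\alpha\log r$ descends to $S^1$ while the complementary line contributes a sign, giving $\eO_2/H^T\cong S^1\times\{\pm\}$. Compactness then follows either directly or, uniformly, by exhibiting a compact $K\subset\eO_d$ with $H^T K=\eO_d$ and applying Lemma~\ref{lem:comp_subset_os}; second countability (recorded after that lemma) reduces the verification to sequential compactness of the invariants.

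The delicate case is (b), where $H^T=\{\mathrm{diag}(e^a,e^b,e^{a\alpha+b\beta})\}$ and $\eO_2=\R^3$ minus the three coordinate axes, with orbit invariant $\lambda=z/(x^\alpha y^\beta)\in\R$ on the open region $\{x,y\ne0\}$. The faces $\{x=0\}$ and $\{y=0\}$ carry full-dimensional orbits that compactify each copy of $\R$ at $\lambda=\pm\infty$, and sequential compactness of $\lambda$ gives compactness of $\eO_2/H^T$. The section dichotomy is the heart of the matter. When $\alpha\beta=0$ only one face drives $\lambda\to\infty$, the added endpoints are single, the quotient is Hausdorff, and an explicit transversal is a topological section. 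When $\alpha\beta\ne0$ both $x\to0$ and $y\to0$ force $\lambda\to\infty$, the ends are doubled, and $\eO_2/H^T$ is non-Hausdorff; since any topological quasi-section would render the action proper (Lemma~\ref{lem:quasi_sect} together with the Palais argument used for Proposition~\ref{prop:proper}) and hence the quotient Hausdorff, no quasi-section --- and a fortiori no topological section --- can exist, which one may also see directly from Proposition~\ref{prop:ex_ts} by checking that $((C,C))$ is not relatively compact for a relatively compact open $C$ with $H^T C=\eO_2$. In the remaining listed cases the invariants supply honest topological sections outright (the unit circle times $\{z=\pm1\}$ in (a), finite sign-representatives in (c), (d), (e)). I expect this section bookkeeping, alongside the exhaustiveness of the classification, to be the principal obstacle.

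Finally I would assemble the integrability statements. For the forward direction, in each listed case possessing a topological section $C\subset\eO_d$ the section is compact, being homeomorphic to the compact space $\eO_d/H^T$; since $\eO_d$ is open and conull, Lemma~\ref{lem:ts_vs_tqs} promotes $C$ to a relatively compact topological quasi-section and Proposition~\ref{prop:qs_impl_int} yields integrability of $\pi_{\eO_d}=\pi$. This argument uses no positivity of the spectrum, so it covers the spiral case (a) as well as (b) with $\alpha\beta=0$, (c), (d), (e). For the converse, were $\pi$ integrable, the necessary condition recorded in Section~1 together with Theorem~\ref{thm:main} (and, outside the completely solvable setting, the Mackey identification of Lemma~\ref{main_lemma}(d)) would produce a compact conull subset of $\eO_d/H^T$, which by the classification forces one of (a)--(e); contrapositively, if none of the cases applies then $\pi$ is not integrable. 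The borderline case (b) with $\alpha\beta\ne0$ is consistent with all of this, having a compact orbit space yet no quasi-section, which is precisely why integrability is asserted only for the listed cases that admit a topological section.
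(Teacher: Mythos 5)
Your overall architecture --- classify the commutative subalgebras by their root-space decomposition, exhibit compact sections or sequentially compact invariants in the listed cases, kill the remaining cases by producing noncompact behaviour in every conull invariant open set, and then convert compactness into integrability via Proposition \ref{prop:qs_impl_int} and Theorem \ref{thm:main} --- is essentially the paper's, and your treatment of case (b) (sequential compactness of the invariant, plus Proposition \ref{prop:ex_ts} and non-relative-compactness of $((C,C))$ for $\alpha\beta\neq 0$) matches the paper's argument. There is, however, one concrete gap in the non-integrability direction. Theorem \ref{thm:main} is only available when ${\rm spec}(h)\subset\R^+$ for all $h\in H$; for the subalgebra with a purely imaginary root (the paper's Case 3(b): $A$ acting as a pure rotation on a plane, with no scaling coupled to it), the group $G$ is not exponential, the Kirillov--Bernat correspondence of Section 3 does not exist, and Lemma \ref{main_lemma}(d) is likewise proved only in the completely solvable setting --- so your parenthetical appeal to ``the Mackey identification of Lemma \ref{main_lemma}(d) outside the completely solvable setting'' does not supply the missing necessary condition. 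The paper closes this case by a different device: it observes that $G\cong(\R^2\rtimes{\rm SO}(2))\times(\R\rtimes\R')$, so that $\widehat G$ is a product, and a nonempty compact open set in $\widehat G$ would project to one in the dual of the Euclidean motion group, contradicting Theorem 1.1 of \cite{GKT}. Some such separate argument is indispensable for your proof to go through.

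A second, smaller point: your proposed mechanism for the ``otherwise'' cases --- a continuous real orbit invariant of noncompact range on a conull open set --- does not exist in all of them. In the paper's Cases 1(a) and 1(c) (one real root with nontrivial nilpotent part and two nonempty layers $\zO_2$, $\zO_3$) the quotient $\eO_2/H^T$ is non-Hausdorff and admits no such global invariant; the paper instead fixes an arbitrary conull invariant open $U$, uses the Plancherel measure class on the section $\Sigma_2$ to extract a sequence $v(k)\in U$ whose third coordinate tends to infinity, and shows by explicit asymptotics of $e^{s_jA+t_jX}v(j)$ that no subsequence of $H^Tv(k)$ converges in $\eO_2/H^T$ --- in particular one must rule out convergence to points of the \emph{other} layer $\zO_3$, which is the delicate step. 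Your sketch covers only the cases (such as Lemma \ref{unipotent} and the pure-rotation case) where a continuous open map onto $\R$ or $(0,+\infty)$ genuinely exists; for the layered cases it needs to be replaced by a sequence argument of the above kind.
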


Note that when $d = 3$, $\eO_d$ consists of open orbits, and it is easy to check directly that the groups generated in examples (c), (d), and (e) above generate finitely many open orbits. We elaborate on this in the cases below, but the main focus of the proof will be on the two-dimensional case.

We first decompose $\R^3$ into generalized eigenspaces for $\h$. Let $\l$ be a complex linear form on $\h$, and put 
$$
E_\l = \cap _{A \in \h} \ker_{\C^3}(A - \l(A) I)^2
$$
where each $A$ is extended to an endomorphism of $\C^3$ as usual. If $E_\l \ne \{0\}$, then we say that $\l$ is root for $\h$. Since $\h$ is commutative, there is a finite set $\mathcal R$ of roots such that
$$
\C^3 = \oplus_{\l \in \mathcal R} E_\l.
$$
Accordingly we write $v = \sum_{\l \in \mathcal R} v^{(\l)}$.
If $\l \in \mathcal R$ then $\overline{\l} \in \mathcal R$ also, and for $v \in \R^3$, $v^{(\overline{\l})} = \overline{v^{(\l)}}$. 
We then form a set $\{ \l_1, \dots , \l_p\}$ by choosing exactly one member of each conjugate pair in $\mathcal R$. If $\l$ is real, put $V_j = E_{\l_j} \cap \R^3$, and otherwise put $V_j = E_{\l_j}$. Then the map $\iota : \R^3 \rightarrow V_1 \oplus \cdots \oplus V_p$ defined by $\iota(v) = \sum_{j=1}^p v^{(\l_j)}$ is an $\R$-linear isomorphism. 

The preceding generalized eigenspace decomposition can be carried out for any commutative Lie algebra $\h $ in $\mathfrak{gl}(n,\R)$. We now make some observations particular to the situation at hand, where $n = 3$. First, note that if $\l_j$ is not real, then $V_j = \C \simeq \R^2$ and $p  = 2$. Second, observe that if there is a non-zero nilpotent element in $\h$, then $p \le 2$ and all roots are real. 
Let $\n = \cap_{j=1}^p \ker \l_j$ denote the subspace of nilpotent elements of $\h$. In light of these observations we itemize several cases.

 \medskip
 \noindent
 {\bf Case 0:} $p = 1$ and $\l_1 = 0$. Here $\h = \n$ is a Lie subalgebra of the Heisenberg Lie algebra, and our assumption that $\h$ is commutative then requires that $d = 2$.

 \medskip
 \noindent
 {\bf Case 1:} $p = 1$ and $\l_1 \ne 0$. Here the dimension of $\n$ is one or two, and $\l_1$ is real. Suppose that $\dim \n = 2$; then for any basis of $\R^3$ there must be $X \in \n$  for which $x_{21} \ne 0$, and then the assumption that $\h$ is commutative implies that for all $Y \in \n$, $y_{32} = 0$. This is example (d). It is easily seen that in this case $\eO_3$ is the union of two open orbits.

 \medskip
 \noindent
 {\bf Case 2:} $p = 2$ and $\l_1$ and $\l_2$ are linearly dependent over $\R$. Here one of the generalized eigenspaces has dimension one, and it follows that the dimension of $\n$ is one, so $d = 2$. 
 
 \medskip
 \noindent
 {\bf Case 3:} $p = 2$ and $\l_1$ and $\l_2$ are linearly independent over $\R$. In this case, if $\n \ne \{0\}$, then both roots are real, and we are necessarily in example (d) above, whence $\eO_3$ is the union of four open orbits. Otherwise there can be a complex root.

 \medskip
 \noindent
 {\bf Case 4:} $p = 3$. Here we must have $\n = \{0\}$, and the dimension of  the real span of the roots is $d$. If $d = 3$ then we clearly obtain example (e), where $\eO_3$ is a union of eight open orbits. Otherwise, with $\l_1$ and $\l_2$ linearly independent, the question of compactness of $\eO_2/H^T$ will depend upon how $\l_3$ is written as a linear combination of $\l_1$ and $\l_2$.

\

The following lemma allows us to assume that all roots are non-vanishing, and in particular, to eliminate Case 0.

\

\begin{lemma} \label{unipotent} Suppose that $\l = 0$ for some $\l \in \mathcal R$. Then $\R^3 / H^T$ contains no non-empty compact open subset. 

\end{lemma}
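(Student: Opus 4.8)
The plan is to extract from the hypothesis a nonzero linear functional on $\R^3$ that is invariant under the dual action, and then to argue that such a functional is incompatible with the existence of a nonempty compact open subset of $\R^3/H^T$. Throughout, write $q : \R^3 \to \R^3/H^T$ for the quotient map (reserving $\pi$ for the representation).

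First I would convert the root condition into a genuine fixed vector. Because $\h$ is commutative, the space $E_0 = \bigcap_{A \in \h}\ker_{\C^3}(A)^2$ is $\h$-invariant: if $A^2 v = 0$ then $A^2(Bv) = B(A^2 v) = 0$ for every $B \in \h$. By hypothesis $E_0 \ne \{0\}$, and since the root $0$ is real, its real form $W = E_0 \cap \R^3$ is a nonzero $\h$-invariant real subspace on which every $A \in \h$ satisfies $A^2 = 0$, hence acts nilpotently. Thus $\{A|_W : A \in \h\}$ is a commuting family of nilpotent endomorphisms of a nonzero real space, and Engel's theorem (simultaneous strict triangularization of commuting nilpotents) produces a nonzero $v \in W \subset \R^3$ with $Av = 0$ for all $A \in \h$. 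As $H = \exp \h$ is connected, this means $hv = v$ for every $h \in H$.

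Next I would pass to the functional $\ell(x) = \langle v, x\rangle$ (standard inner product) and check that it is invariant under the dual action: $\ell(h^{-T}x) = \langle v, h^{-T}x\rangle = \langle h^{-1}v, x\rangle = \langle v, x\rangle$, using $h^{-1}v = v$. Consequently $\ell$ factors as $\ell = \bar\ell \circ q$ for a continuous map $\bar\ell : \R^3/H^T \to \R$. Now suppose toward a contradiction that $S \subset \R^3/H^T$ is a nonempty compact open subset. Then $U = q^{-1}(S)$ is nonempty, open and $H^T$-invariant, and since $q$ is surjective we have $\ell(U) = \bar\ell(S)$. The set $\bar\ell(S)$ is compact, being the continuous image of a compact set (no Hausdorffness of the quotient is needed here). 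But $\ell$ is a nonzero linear functional, hence an open map, so $\ell(U)$ is a nonempty open subset of $\R$. No nonempty subset of $\R$ is at once open and compact, since a compact set is closed and bounded, while the only clopen subsets of the connected space $\R$ are $\emptyset$ and $\R$, the latter not compact. This contradiction establishes the lemma.

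The main obstacle is the first step: passing from ``$0$ is a root'', which a priori only provides a nonzero common \emph{generalized} null vector, to an honest common null vector. This is exactly where the commutativity of $\h$ is essential, and Engel's theorem is the clean tool for it. Once the $H$-fixed vector is in hand, the transpose bookkeeping (matching $H$-fixedness of $v$ with $H^T$-invariance of $\ell$) and the concluding topological argument are both routine. I note in passing that nothing in the argument uses $n=3$, so the same reasoning shows that the presence of the root $0$ obstructs compact open sets in $\R^n/H^T$ in any dimension.
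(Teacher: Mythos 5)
Your proof is correct and follows essentially the same route as the paper: both arguments produce a continuous, open, orbit-invariant surjection $\R^3/H^T \to \R$ and then invoke the fact that $\R$ contains no nonempty compact open subset. The only difference is cosmetic --- the paper obtains the invariant coordinate by lower-triangularizing the unipotent action of $H^T$ on the root-zero generalized eigenspace and quotienting by the resulting invariant complement, whereas you construct the dual object (an $H$-fixed vector, hence an orbit-invariant linear functional) via Engel's theorem.
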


\begin{proof} We have a generalized eigenspace $V_0$ in which the action of $H^T$ is unipotent. Choose a basis $e_1, \dots , e_{n_0}$ for $V_0$ for which every element of $H^T$ has a lower triangular matrix, and let $U$ be the span of $e_2, \dots e_{n_1}$ together with any other generalized eigenspaces  (if $p = 1$ and $n_1 = 1$ then $U = \{0\}$). Then $U$ is $H^T$-invariant.  Let $\pi :\R^3 \rightarrow \R^3 / U = \R e_1$ be the canonical map. Then $\pi$ is continuous,  open, and $H^T$-invariant, and so gives a well-defined, continuous open map $\overline{\pi} : \R^3 / H^T \rightarrow \R e_1$.  If $C$ is a compact open subset of $\R^3 / H^T$, then $\overline{\pi}(C)$ is compact and open in $\Re_1$, hence empty. Thus $C$ is empty.

\end{proof}

Next we prove two lemmas  for Case 1. When an ordered basis $\{e_1, e_2, e_3\}$  for $\R^3$ is chosen, let $p_i : \R^3 \rightarrow \R$ be the linear (coordinate) function dual to $e_i$. 

\

\begin{lemma} Suppose that $d = 2$ and we are in Case 1, and write $\h^T = (A,X)_\R$ where $\l_1(A) = 1$ and $\l_1(X) = 0$. Choose a basis for $\R^3$ for which $\h^T$ is lower triangular, and write 
$$X = \left[ \begin{matrix} 0&&\\x_{21}&0&\\x_{31}&x_{32}&0\end{matrix}\right]
$$
Put
$$
\Omega_2 = \{ v  : p_2(Xv) \ne 0\}
$$
and 
$$
\Omega_3 = \{ v : p_2(Xv) = 0, p_3(Xv) \ne 0\}.
$$
Then the sets $\Omega_2$ and $\Omega_3$ are $H^T$-invariant and $\eO_2 = \Omega_2 \cup \Omega_3$. When $\Omega_b$ is nonempty, then 
$$
\Sigma_b = \{ v \in \O_b : p_b(v) = 0 \text{ and }\  p_b(Xv) = \pm 1\}
$$
is a topological section for the $H^T$-orbits in $\Omega_b$.

\end{lemma}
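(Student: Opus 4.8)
The plan is to reduce the whole statement to an explicit description of the flow generated by $A$ and $X$, exploiting that $\h^T$ is abelian and that in the chosen basis $A = I + N$ with $N$ strictly lower triangular and $X$ strictly lower triangular. Identifying $H \cong \R^2$ via $\exp$, I write a generic element of $H^T$ as $h^T = \exp(sA+tX) = e^s U_{s,t}$, with $U_{s,t}$ lower unipotent. The single identity on which everything rests is that, since $A$ and $X$ commute, $X$ commutes with every $h^T \in H^T$, so $X(h^T v) = h^T(Xv)$. Because $Xv$ always lies in $\span(e_2,e_3)$ (one checks $p_1(Xv)=0$) and $U_{s,t}$ is lower unipotent, this yields the scaling formulas $p_2(X h^T v) = e^s p_2(Xv)$ and, when $p_2(Xv)=0$, $p_3(X h^T v) = e^s p_3(Xv)$. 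These two formulas give the $H^T$-invariance of $\Omega_2$ and $\Omega_3$ at once, since they show the defining conditions $p_2(Xv)\ne0$, respectively $p_2(Xv)=0 \ne p_3(Xv)$, are preserved by the action.

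Next I would identify $\eO_2$. An $H^T$-orbit through $v$ has dimension $2$ precisely when $Av$ and $Xv$ are linearly independent; and $Xv=0$ places $X$ in the stabilizer, so $\eO_2 \subseteq \{Xv\ne0\} = \Omega_2 \cup \Omega_3$. For the reverse inclusion I would assume $sAv+tXv=0$ with $Xv\ne0$ and take first coordinates: since $p_1(Av)=v_1$ and $p_1(Xv)=0$, this forces $sv_1=0$. A short case distinction, using the explicit strictly-lower-triangular forms of $N$ and $X$, then drives $(s,t)$ to $0$ (if $v_1\ne0$ then $s=0$ and $Xv\ne0$ forces $t=0$; if $v_1=0$ one repeats the argument one row down with $v_2$). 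Hence $\{Xv\ne0\}\subseteq \eO_2$ and $\eO_2 = \Omega_2 \cup \Omega_3$.

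For the section statement, fix $b$ and $v\in\Omega_b$, and solve $w := \exp(-sA-tX)v \in \Sigma_b$ for $(s,t)$. The scaling formula makes $p_b(Xw) = e^{-s}p_b(Xv)$ depend only on $s$, so the normalization $p_b(Xw)=\pm1$ determines $s$ uniquely, the sign being forced to agree with that of $p_b(Xv)$ (which is exactly why $\Sigma_b$ must carry both signs). With $s$ fixed, the key point is that $\partial_t w = -Xw$, hence $\partial_t w_b = -p_b(Xw) = -e^{-s}p_b(Xv)$ is a nonzero constant; thus $t \mapsto w_b$ is affine with nonzero slope and $p_b(w)=0$ has a unique solution $t$. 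This simultaneously shows each orbit in $\Omega_b$ meets $\Sigma_b$ in exactly one point and that the stabilizer is trivial, so the action is free there. Continuity of $v \mapsto (s(v),t(v))$ is immediate from these explicit formulas, whence the inverse of $(h,c)\mapsto h^Tc$ is continuous and the map is a homeomorphism onto $\Omega_b$.

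The main obstacle I anticipate is the solvability of the equation for $t$ once $s$ is fixed: a priori $w_b$ could be genuinely quadratic in $t$, since $X^2 \ne 0$ exactly when $X$ is a full Jordan block ($x_{21}x_{32}\ne0$). The clean resolution is precisely the identity $\partial_t w = -Xw$, valid because $\h^T$ is abelian; combined with the invariance of $p_b(Xv)$ along the $t$-flow, it forces $w_b$ to be affine with slope $-e^{-s}p_b(Xv)\ne0$, so any quadratic contribution must cancel. Verifying this concretely---equivalently, noting that on $\Omega_3$ the full-Jordan-block case has $p_2(Xv)=x_{21}v_1=0$ and hence $v_1=0$, which annihilates the offending $t^2$-terms---is the one place where the commutativity of $A$ and $X$ is used in an essential, rather than merely formal, way.
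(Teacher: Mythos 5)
Your proof is correct and takes essentially the same route as the paper's: identify $\mathcal{O}_2=\Omega_2\cup\Omega_3$ from the linear (in)dependence of $Av$ and $Xv$, then fix $s$ by the scaling $p_b(Xh^Tv)=e^{s}p_b(Xv)$ and solve an affine equation in $t$ for the normalization $p_b(w)=0$, with continuity of $(s,t)$ in $v$ giving the homeomorphism. Your derivation of the affineness via $\partial_t w=-Xw$ together with the $t$-invariance of $p_b(Xw)$ is just a more structural packaging of the paper's observation that $p_b(X^2v)=0$ on $\Omega_b$; the substance is identical.
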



\begin{proof}  We have $\Omega_2 \ne \emptyset$ if and only if $x_{21} \ne 0$.  In this case $p_2(Xv) = x_{21}v_1 $, and so $\Omega_2 = \{ v : v_1 \ne 0\}$. Thus $\Omega_2$ is $H$-invariant with 
$$
p_2(Xe^{sA} v) = e^{s\l_1(A)} p_2(Xv).
$$
Now suppose that $v_1 = 0$. If $p_3(Xv) \ne 0$ for some $v$ then $p = 1$ (i.e., there is only one generalized Jordan block ) and $p_3(e^{sA} Xv) = e^{s\l_1(A)} p_3(Xv)$ for all $v$. 

Observe that if $\zO_b$ is nonempty, then the restriction of the map $\phi : \Omega_b \rightarrow \R^* \times \R$ defined by 
$$
\phi(v) = (p_b(Xv), v_b)
$$
to each $H$-orbit has rank 2. Thus $\zO_b \subset \eO_2$. If $v \in \eO_2$ then the action of $e^{\R X}$ is non-trivial and so $v \in \zO_b, b = 2$ or $3$.

To see that $\Sigma_b$ is a section, let $v\in \zO_b$. Note that by definition of $b$, $p_b(X^2v) = 0$, so for any $t \in \R$, 
$$
p_b(e^{tX}  v) = p_b(v) + t p_b(Xv).
$$
Put $s = - \ln |p_b(Xv)|$,  and 
$$
t = -  \frac{p_b(v)}{p_b(X  v)}.
$$
Put $v^* = e^{tX + sA}v$. Then $p_b(X v^*) = p_b(Xe^{sA} v) = e^{s} p_b(Xv) = \pm 1$, and  
$$
p_b(v^*) = p_b( \ v) + t p_b(X \ v) = 0.
$$
Thus $v^*$ belongs to $\Sigma_b$. On the other hand if $v^* \in \Sigma_b$ and $v= e^{tX + sA}v \in \Sigma_b$ also, then $p_b(Xv) = p_b(Xe^{sA} v^*) = e^s p_b(Xv^*) $ implies $s = 0$ and 
$$
0 = p_b(v) = p_b(e^{tX} v) =  t p_b(Xv)
$$
shows that $t = 0$. Thus $\Sigma_b$ meets each orbit at exactly one point. Furthermore, the explicit computations determining $s,t$ and finally $v^*$ from $v$ show that the map $v \mapsto v^*$ is continuous. This implies that the section is topological. 
\end{proof}

\begin{lemma} \label{1layer} Suppose that $d = 2$ and we are in Case 1 and that $\eO_2$ consists of only one layer:  $\eO_2 = \zO_a = \zO$. Then $\eO_2/H^T$ does not contain a non-empty compact open subset. 

\end{lemma}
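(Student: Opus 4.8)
The plan is to exploit the explicit topological section produced in the previous lemma: since $\eO_2 = \zO_a$ is a single layer (with $a\in\{2,3\}$), that lemma gives a topological section $\Sigma_a = \{v\in\zO_a : p_a(v)=0,\ p_a(Xv)=\pm1\}$ for the $H^T$-action on $\eO_2$. By the definition of a topological section, the canonical bijection $\Sigma_a \to \eO_2/H^T$ is a homeomorphism, so it suffices to show that $\Sigma_a$, carrying the subspace topology from $\R^3$, contains no nonempty compact open subset.

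First I would determine $\Sigma_a$ explicitly. If $a=2$ then $x_{21}\ne0$ and $p_2(Xv)=x_{21}v_1$, so the defining conditions $v_2=0$ and $x_{21}v_1=\pm1$ pin $v_1$ to the two values $\pm1/x_{21}$ and leave $v_3$ free; hence $\Sigma_2$ is the disjoint union of the two lines $\{v_1=1/x_{21},\,v_2=0\}$ and $\{v_1=-1/x_{21},\,v_2=0\}$. If $a=3$ then $\zO_2=\emptyset$ forces $x_{21}=0$, so that $p_3(Xv)=x_{31}v_1+x_{32}v_2$, and the conditions $v_3=0$, $x_{31}v_1+x_{32}v_2=\pm1$ describe a pair of parallel lines in the plane $\{v_3=0\}$. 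In either case $\Sigma_a$ is homeomorphic to $\R\sqcup\R$; in particular it is Hausdorff, and its two connected components are each a copy of $\R$.

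Finally I would conclude by a connectedness argument. Because $\Sigma_a$ is Hausdorff, any compact subset is closed, so a compact open subset is clopen. The two lines are precisely the connected components of $\Sigma_a$ and are themselves clopen, whence any clopen set meets each component in a clopen subset of $\R$, i.e. in $\emptyset$ or the whole line. Thus every nonempty clopen subset of $\Sigma_a$ contains an entire line $\cong\R$ and is therefore noncompact, which gives the claim. The only point requiring a little care is the explicit identification of $\Sigma_a$ in the two cases above (and the observation that $p_a(Xv)$ assumes both signs on $\zO_a$, so that $\Sigma_a$ really is a union of two lines); this presents no genuine obstacle, since even a single line $\cong\R$ already has no nonempty compact open subset.
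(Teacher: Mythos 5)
Your proposal is correct and follows essentially the same route as the paper: transfer a compact open subset of $\eO_2/H^T$ to the topological section $\Sigma_a$, note that such a set is clopen there and hence contains a connected component, and conclude from the unboundedness of the components (which you make explicit as lines) that the set must be empty.
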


\begin{proof} Let $\Sigma$ be the topological section  for $\eO_2$ constructed above, let $\sigma : \eO_2/H^T \rightarrow \Sigma$ be the natural homeomorphism, and let $C$ be a compact open subset of $\eO_2 / H^T$. Then $\sigma(C)$ is a compact and open subset of $\Sigma$. Since $\Sigma$ is carries the relative topology from $\R^3$, then $\sigma(C)$ is closed and bounded. Now if $\sigma(K)$ were nonempty, then it would contain a component of $\Sigma$. But the components of $\Sigma$ are unbounded, so this is impossible, and so $\sigma(C)$, and hence $C$, are empty. 
\end{proof}

We now prove the proposition.

\begin{proof}[Proof of Proposition \ref{33}] We proceed on a case by case basis, with the assumption that $d = 2$. 

{\bf Case 0.} We have already established in Lemma \ref{unipotent} that $\mathcal{O}_2/H^T$ does not contain an open compact subset, and since $\mathfrak{h}$ is conjugate to a  Lie subalgebra of the Heisenberg Lie algebra, Theorem \ref{thm:main} applies to show that $\pi$ is not integrable. 

\medskip
\noindent
{\bf Case 1.} First recall that here $\mathfrak{n}$ is nontrivial, and all roots are real. Hence Theorem \ref{thm:main} applies once again, and yields non-integrability whenever $\mathcal{O}_2/H^T$ does not contain nontrivial compact open subsets.  Here $\l$ is real and $ \h = (A,X)_\R$ where $\l(A) = 1$ and $\l(X) = 0$. Choose a basis for $\R^3$ for which $A $ has Jordan form, and for which $X$ is lower triangular. Put $Y = A - \l(A)I$. We distinguish several subcases.

\medskip
\noindent
(a) Suppose that $Y = 0$. If $\eO_2$ consists of only one layer, then confer the preceding lemma. Suppose then that $\eO_2 $ is the union of the non-empty sets $ \zO_2$ and $ \zO_3$. Then $\zO_2 \ne \emptyset $ implies $x_{21} \ne 0$, while $\zO_3 \ne \emptyset$ implies that for some $v$, $v_ 1 = 0$ and $p_3(Xv) \ne 0$, hence $x_{32} \ne 0$. 

Let $U$ be an open $H^T$-invariant subset of $\eO_2$ that is conull for Lebesgue measure. Then $U\cap \zO_2$ is conull and $H^T$-invariant, hence $U \cap \Sigma_2$ is conull for the Plancherel measure class, where $ \Sigma_2 = \{ [\pm 1, 0,v_3 ] : v_3 \in \R\}$. But in this case the Plancherel measure class is that of Lebesgue measure on each component of $\Sigma_2$, so in particular, the set 
$$
E = \{ z \in \R : [1,0,z]^t  \in U\}
$$
is conull for Lebesgue measure. Now choose a sequence $z(k) \in E$ such that $z(k) \rightarrow \infty$ and put $v(k) = [1,0,\text{sign}(x_{21}x_{32})z(k)]^t, k = 1, 2, 3 \dots $. We claim that the sequence $H^Tv(k)$ has no convergent subsequence in $\eO_2 / H^T$. 

Suppose that the claim is false. Let $v(j) = [1,0,\text{sign}(x_{21}x_{32})z(j)]^t$ be a subsequence of $v(k)$ for which $H^Tv(j)$  converges to $H^T v$, for some $v \in \eO_2$. Since $v(k)$ is not convergent in $\Sigma_2$, then $v \notin \Sigma_2$, and $H^T v\in \zO_3/H^T$. Hence we may assume that $v = [0,\pm 1,0]^t \in \Sigma_3$. It follows from the definition of the quotient topology that we have $s_j, t_j \in \R$ such that $e^{s_j A + t_j X}v(j) \rightarrow v$; that is, 
$$
e^{s_j} \ [ \pm 1, x_{21}t_j, \text{sign}(x_{21}x_{32})z(j) + x_{31} t_j + x_{21}x_{32} t_j^2 / 2 ] \longrightarrow [0,\pm1,0]^t.
$$
Thus $e^{s_j} \rightarrow 0$ while $e^{s_j}x_{21} t_j \rightarrow \pm 1$, and 
\begin{equation}\label{coord3}
\ e^{s_j} \bigl( \text{sign}(x_{21}x_{32})z(j) + x_{31} t_j + x_{21}x_{32} t_j^2 / 2\bigr) \longrightarrow 0.
 \end{equation}
Now $e^{s_j} \rightarrow 0$ and  $e^{s_j}x_{21} t_j \rightarrow \pm 1$ implies that $ |t_j| \rightarrow +\infty$, and $e^{s_j } t_j^2 \rightarrow +\infty$. But then 
$$
\text{sign}(x_{21}x_{32}) \ e^{s_j} \bigl( \text{sign}(x_{21}x_{32})z(j) + x_{31} t_j + x_{21}x_{32} t_j^2 / 2 \bigr) \longrightarrow +\infty 
$$
contradicting (\ref{coord3}).  Thus the claim is proved, and $U$ is not compact.

\medskip
\noindent
(b)  Suppose that $Y \ne 0, Y^2 = 0$, that is , that either
$$
A = \left[ \begin{matrix} \l && \\1&\l&\\&&\l\end{matrix} \right] \ \ \text{ or } \ \ 
A = \left[ \begin{matrix} \l && \\&\l&\\&1&\l\end{matrix} \right].
$$
Then $X$ has the form
$$
X =  \left[ \begin{matrix} 0 && \\x_{21}&0&\\x_{31}&&0\end{matrix} \right] \ \ \text{ or } \ \
X =  \left[ \begin{matrix} 0 && \\&0&\\x_{31}&x_{32}&0\end{matrix} \right]  
$$
respectively. If $x_{21} \ne 0$ (in the first case) then $\eO_2 = \zO_2$; otherwise, $\eO_2 = \zO_3$. In any case Lemma \ref{1layer} applies.

\medskip
\noindent
(c) Suppose that $Y^2 \ne 0$, that is, that
$$
A = \left[ \begin{matrix} \l && \\1&\l&\\&1&\l\end{matrix} \right].
$$
Then $X$ has the form
$$
X =  \left[ \begin{matrix} 0 && \\x_{21}&0&\\x_{31}&x_{32}&0\end{matrix} \right]
$$
where $b = x_{21} = x_{32}$. If $b = 0$ then $\eO_2 = \zO_3$ and Lemma \ref{1layer} applies. Suppose then that $b \ne 0$. We have $\eO_2 = \zO_2 \cup \zO_3$ and both are non-empty.

We assume $b = 1$, and write $c = x_{31}$. We compute for any $v \in \R^3$, 
$$
e^{sA + tX} v = e^{\l s} \ [ v_1, v_2 + (s+t) v_1, v_3 + (s+t) v_2 + (ct + (1/2)(s+t)^2)v_1].
$$
The argument now proceeds in a manner similar to that of Case 1(a). Let $U$ be an open $H$-invariant subset of $\eO_2$ that is conull for Lebesgue measure. As before we find that the set 
$$
E = \{ z \in \R : [1,0,z]^t  \in U\}
$$
is conull for Lebesgue measure. Now choose a sequence $z(k) \in E$ such that $z(k) \rightarrow \infty$ and put $v(k) = [1,0,z(k)]^t, k = 1, 2, 3 \dots $. We claim that the sequence $H^Tv(k)$ has no convergent subsequence in $\eO_2 / H^T$. Supposing the claim is false, then as before this leads us to the setup: 
$$
e^{ s_j A + t_j X}v(j) \longrightarrow v
$$
where $v(j)$ is a subsequence of $v(k)$ and where $v  = [0,\pm 1,0]^t \in \Sigma_3$. More explicitly this means that 
$$
e^{\l s_j} \ [ 1, s_j + t_j, z(j) + ct_j + (1/2)(s_j + t_j)^2 ] \longrightarrow [0,\pm 1,0].
$$
Thus $e^{\l s_j} \rightarrow 0$, and $e^{\l s_j} (s_j+ t_j) \rightarrow \pm 1$, and
\begin{equation}\label{coord3again}
e^{\l s_j} \bigl( z(j) + ct_j + (1/2)(s_j + t_j)^2\bigr) \longrightarrow 0.
\end{equation}
The first two relations imply that $e^{\l s_j} (s_j+ t_j)^2 \rightarrow +\infty,$ and moreover (since $e^{\l s_j}s_j \rightarrow 0$) that $e^{\l s_j}t_j \rightarrow \pm 1$. But $ e^{\l s_j} z(j) > 0$ and so $e^{\l s_j} (s_j+ t_j)^2 \rightarrow +\infty$ and $e^{\l s_j}t_j \rightarrow \pm 1$ implies that 
$$
e^{\l s_j} \big( z(j) + ct_j + (1/2)(s_j + t_j)^2\bigr) \longrightarrow +\infty
$$
contradicting (\ref{coord3again}). Therefore the set $U$ is not compact.

\medskip
\noindent
{\bf Case 2: } Here we have two dependent roots $\l_1$ and $\l_2 $, both real. One of $V_1$ or $V_2$ has dimension two; assume it is $V_1$. Then we have a basis for $\R^3$ and $X\in \h$ with matrix form
$$
X = \left[\begin{matrix} 0&& \\ 1&0& \\&&0\end{matrix}\right].
$$

By Lemma \ref{unipotent} we may assume that both $\l_1$ or $\l_2$ are non-vanishing. Hence we can choose a basis for $\R^3$ such that $\h = (A,X)_\R$ where $X$ is as above, and where
$$
A = \left[ \begin{matrix} 1 && \\ &1&\\&&\alpha\end{matrix} \right]
$$
for some $ \alpha \ne 0$. Here it is easily seen that $\eO_2 = \zO_2 = \{ v : v_1 \ne 0\}$, for which we have a global, connected and noncompact topological section $\Sigma_2 = \{ v : v_1 = \pm 1, v_2 = 0, v_3 \in \mathbb{R} \}$. The Plancherel measure, when transferred to the section $\Sigma_2$, is again seen to be equivalent to Lebesgue measure with respect to the third coordinate. Hence, the same reasoning as in the previous cases shows that $\eO_2 / H^T$ cannot contain a conull compact open subset. 

Again, non-integrability follows from Theorem \ref{thm:main}. 

\medskip
\noindent
{\bf Case 3:}  $p = 2$, $\l_1 $ and $  \l_2$ are independent over $\R$. Write  $\h = (A, B )_\R$ where $\l_2(B) = 1$, $\l_2(A) = \l_1 (B) = 0$, and where $\l_1(A) = c \in \C$.

\medskip
\noindent
(a) Suppose that $\Re c \ne 0$. We have $\iota : \R^3 \rightarrow \C \times \R$ defined by 
$\iota([v_1, v_2, v_3]^t =[v_1 + i v_2, v_3]^t$, and via this realization we can write
$$
A = \left[ \begin{matrix} c&\\&0\end{matrix}\right], \ \ \ B  = \left[ \begin{matrix} 0&\\&1\end{matrix}\right]
$$
It is clear that $\eO_2 = \{ v : v_1 + iv_2 \ne 0, v_3 \ne 0\}$ and that a topological section for $\eO_2$ is $\Sigma = \{ v : v_1^2 + v_2^2 = 1, |v_3| = 1\}$. This section is compact so $\eO_2 / H $ is compact. 

\medskip
\noindent
(b)  Suppose that $c = i$. Let $\pi : \eO_2 \rightarrow \C^*$ be the map $\pi(v) = v_1 + iv_2$. Then  $\pi$ is continuous and open and $\pi( e^{sA + tB}v) = e^{is} (v_1 + iv_2)$; hence we have the continuous open mapping $\overline{\pi} : \eO_2 / H \rightarrow \C^* / H = (0,+\infty)$. If $K$ is a compact open subset of $\eO_2 / H$ then $\overline{\pi}(K)$ is a compact open subset of $(0,+\infty)$, hence empty, so $K$ is empty. 

For non-integrability of $\pi$, we observe that the block-diagonal structure of $H$ allows to write 
\[ \mathbb{R}^3 \rtimes H \cong \left( \mathbb{R}^2 \semdir {\rm SO}(2) \right) \times \left( \mathbb{R} \rtimes \mathbb{R}' \right) ~.\]
It follows that the dual space of $G= \mathbb{R}^3 \rtimes H$ is the cartesian product of the unitary duals of the factors.
In particular, projecting a nonempty open compact subset of $\widehat{G}$ onto the first component would yield a nonempty open compact subset in the dual of $\mathbb{R}^2 \semdir {\rm SO}(2)$, contradicting Theorem 1.1 of \cite{GKT}. Hence there are no integrable projections in ${\rm L}^1(G)$.  


\medskip
\noindent
{\bf Case 4:} Here each $\l_j$ is real and the action of $H^T$ is semi-simple. Two of the roots are independent, and we can write $\h = (A, B)_\R$ where 
$$
A = \left[ \begin{matrix} 1 && \\ &0&\\&&\alpha\end{matrix} \right]
$$
and 
$$
B = \left[ \begin{matrix} 0 && \\ &1&\\&&\b\end{matrix} \right].
$$
By Lemma \ref{unipotent} we may assume that one of $\alpha $ or $\b$ are nonzero. We can write $\eO_2 = \zO_{12} \cup \zO_{13} \cup \zO_{23}$ as a disjoint union, where 
$$\begin{aligned}
\zO_{12} &= \{ v\in \eO_2 : v_1v_2 \ne 0\},\\ \zO_{13} &= \{ v\in \eO_2  : v_2 = 0, v_1v_3 \ne 0\},\\ \zO_{23} &= \{ v\in \eO_2  : v_1 = 0, v_2v_3 \ne 0\}.
\end{aligned}
$$
Note that if $\alpha = 0$, then $\zO_{23} = \emptyset $ and if $\b = 0$ then $\zO_{13} = \emptyset.$  Topological sections in each layer  respectively are 
 $$
 \Sigma_{12} = \{ [\pm 1, \pm 1, v_3 ] : v_3 \in \R\}, \ \Sigma_{13} = \{ [\pm 1, 0 \pm 1]\}, \ \Sigma_{23} = \{ [0,\pm1,\pm1]\}.
 $$
 We insert a lemma for this particular case.

\begin{lemma} Suppose that $\h$ has the form above. Then the following are equivalent.

\begin{itemize}

\item[(i)] $\eO_2/H^T$ is compact.

\item[(ii)] $\eO_2/H^T$ contains a compact open conull subset.

\item[(iii)] At least one of $\alpha$ or $\b$ is positive. 

\end{itemize}

\end{lemma}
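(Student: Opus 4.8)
The plan is to establish the cycle of implications $(iii)\Rightarrow(i)\Rightarrow(ii)\Rightarrow(iii)$. Throughout I use that every element of $H^T$ is a diagonal matrix $\mathrm{diag}(e^{s},e^{t},e^{\alpha s+\b t})$ with $(s,t)\in\RR^2$, so the $H^T$-action sends $\xi=(\xi_1,\xi_2,\xi_3)$ to $(e^{s}\xi_1,e^{t}\xi_2,e^{\alpha s+\b t}\xi_3)$; the three orbit types are exactly $\zO_{12},\zO_{13},\zO_{23}$ with sections $\Sigma_{12},\Sigma_{13},\Sigma_{23}$ as above. Since $\eO_2/H^T$ is second countable, compactness coincides with sequential compactness (as observed after Lemma~\ref{lem:comp_subset_os}), and I will test sequential (non-)compactness directly. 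The implication $(i)\Rightarrow(ii)$ is immediate: if $\eO_2/H^T$ is compact then it is itself a compact, open, conull subset of itself.

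For $(iii)\Rightarrow(i)$, by the symmetry swapping the first two coordinates (which interchanges $\alpha\leftrightarrow\b$ and $\zO_{13}\leftrightarrow\zO_{23}$) I may assume $\b>0$. Given a sequence in $\eO_2$, the pigeonhole principle lets me pass to a subsequence lying in a single stratum. On $\zO_{13}$ and $\zO_{23}$ the sections are finite, so a constant (hence convergent) subsequence exists. On $\zO_{12}$ I reduce via $\Sigma_{12}$ to $v(k)=(\epsilon_1,\epsilon_2,z_k)$ with fixed signs; after a further subsequence either $z_k$ is bounded, giving convergence inside $\zO_{12}/H^T$, or $z_k\to\pm\infty$. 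In the latter case I take $s=0$ and $t_k=-\b^{-1}\ln|z_k|$, so that $h_k v(k)=(\epsilon_1,\epsilon_2 e^{t_k},\mathrm{sign}(z_k))\to(\epsilon_1,0,\pm1)\in\zO_{13}$; since the quotient map is continuous this forces $[v(k)]\to[(\epsilon_1,0,\pm1)]$. This proves sequential compactness, i.e.\ $(i)$.

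For $(ii)\Rightarrow(iii)$ I argue the contrapositive: assume $\alpha\le0$ and $\b\le0$ (not both zero), and suppose toward a contradiction that some open, $H^T$-invariant, conull $U\subset\eO_2$ has $U/H^T$ compact. As $\zO_{13}\cup\zO_{23}$ is Lebesgue-null, $U\cap\zO_{12}$ is conull; parametrizing an orthant of $\zO_{12}$ by $(s,t,z)\mapsto(e^{s},e^{t},z\,e^{\alpha s+\b t})$, a diffeomorphism whose Jacobian $e^{s}e^{t}e^{\alpha s+\b t}$ is a positive density, and applying Fubini (equivalently, invoking the measure-class identification used in the earlier cases), I find $E=\{z:(\epsilon_1,\epsilon_2,z)\in U\}$ conull in $\RR$, hence unbounded. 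Choosing $z_k\in E$ with $z_k\to+\infty$ gives $v(k)=(\epsilon_1,\epsilon_2,z_k)\in U$; it then suffices to show $[v(k)]$ has no subsequence converging in $\eO_2/H^T$, which contradicts sequential compactness of the subspace $U/H^T$.

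The main obstacle, and the step where the sign hypothesis is decisive, is exactly this last claim. Suppose $h_j v(k_j)\to v\in\eO_2$ with $h_j=\mathrm{diag}(e^{s_j},e^{t_j},e^{\alpha s_j+\b t_j})$, so that $e^{s_j}\to|v_1|$, $e^{t_j}\to|v_2|$, and $z_{k_j}e^{\alpha s_j+\b t_j}\to v_3$. I split on which coordinates of $v$ vanish. If $v_1,v_2\ne0$ then $s_j,t_j$ converge, so $e^{\alpha s_j+\b t_j}$ tends to a positive limit and the third coordinate diverges to $+\infty$, contradicting finiteness of $v_3$. If exactly one vanishes, say $v\in\zO_{13}$ (forcing $\b\ne0$, hence $\b<0$, and $t_j\to-\infty$), then $\b t_j\to+\infty$ while $\alpha s_j$ stays bounded, so again $z_{k_j}e^{\alpha s_j+\b t_j}\to+\infty\ne v_3$; the case $v\in\zO_{23}$ is symmetric, using $\alpha<0$ and $s_j\to-\infty$. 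Finally $v_1=v_2=0$ is incompatible with $v\in\eO_2$. Thus no limit exists, completing the contradiction. I expect the bookkeeping of this case analysis, together with the justification that $E$ is conull, to be the only delicate points; the remaining steps are routine.
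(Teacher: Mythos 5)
Your proof is correct and follows essentially the same route as the paper: the trivial implication (i)$\Rightarrow$(ii), a sequential-compactness argument via the explicit sections $\Sigma_{12},\Sigma_{13},\Sigma_{23}$ and the rescaling $t_k=-\beta^{-1}\ln|z_k|$ for (iii)$\Rightarrow$(i), and for (ii)$\Rightarrow$(iii) the same test sequence $(\epsilon_1,\epsilon_2,z_k)$ with $z_k\to+\infty$ shown to have no convergent subsequence by tracking the sign of $\alpha s_j+\beta t_j$. The only differences are cosmetic (your symmetry reduction to $\beta>0$, organizing the divergence argument by vanishing coordinates of the limit rather than by section, and spelling out the Fubini justification that $E$ is conull, which the paper obtains from the Plancherel measure-class identification).
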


\begin{proof} (i) implies (ii) is trivial. To prove (ii) implies (iii), suppose that $\mathcal U$ is an $H^T$-invariant open conull subset of $\eO_2$. Then $\mathcal U \cap \zO_{12} \ne \emptyset$, and it follows that there is an open conull subset $Z$ of $\R$ such that $ \{ \pm 1\} \times \{\pm 1\} \times Z \subset \Sigma_{12} \cap \mathcal U$. Suppose that neither $\alpha$ nor $\b$ are positive, and hence one is negative. Assume that $\b < 0$. Put 
$$
v(k) = [1,1,z(k)]
$$
where $z(k) \in Z$ and $z(k) \rightarrow \infty$. We claim that no subsequence of $(H^Tv(k))$ is convergent. Suppose the claim is false; let $s_j, t_j \in \R$ and $v(k_j) = v(j)$ be a subsequence that converges to a point $v$ in $\eO_2$. Since $z(j) \rightarrow \infty$ and $\Sigma_{12}$ is a topological section for $\zO_{12}$, then $v \notin\zO_{12}$. Hence we may assume that $v \in \Sigma_{13} \cup \Sigma_{23}$. Now we have 
$$
[ e^{s_j}, e^{t_j} , e^{\alpha s_j+\b t_j} z(j)] \rightarrow v.
$$
 with $v \in \Sigma_{13} \cup \Sigma_{23}$. If $v \in \Sigma_{13}$, then $v = [1, 0 ,\pm 1]^t$. Hence  $e^{s_j } \rightarrow 1$ and $e^{t_j } \rightarrow 0$, and so  $e^{\alpha s_j + \b t_j} \rightarrow+\infty $ hence $e^{\alpha s_j + \b t_j} z(j) \rightarrow +\infty$, contradicting the convergence above. If $v \in \Sigma_{23}$, then $v = [0, 1,\pm 1]^t$. Hence  $e^{s_j } \rightarrow 0$ and $e^{t_j } \rightarrow 1$. Now if $\alpha < 0$ then $e^{\alpha s_j + \b t_j}  \rightarrow +\infty $ while if $\alpha = 0$, then $e^{\alpha s_j + \b t_j}  \rightarrow 1$. In either case $e^{\alpha s_j+\b t_j} z(j) \rightarrow +\infty$, contradicting the convergence above. If $\b = 0$ then $\alpha < 0$ and the proof is similar. Thus (ii) implies (iii). 
 
 Now suppose (iii). Let $v(k)$ be any sequence in $\eO_2$. In order to show convergence of a subsequence of orbits, we may first assume that each $v(k)$ belongs to one of the three sections above, and then by passing to a subsequence, that every $v(k)$ belongs to one of these sections. Now if $v(k) \in \Sigma_{13}$ for all $k$, or $v(k) \in \Sigma_{23}$ for all $k$, then since these are finite sets, it is obvious there is a convergent subsequence. Suppose then that $v(k)$ belongs to $\Sigma_{12}$ for all $k$. 
 
 Thus $v(k) = [\pm 1, \pm 1, v_3(k)]$ for each $k$. If the sequence $v_3(k)$ of real numbers is bounded, then again it is clear that we have a convergent subsequence, so, passing to a subsequence, we may assume that $v(k) \rightarrow +\infty$ or  $v(k) \rightarrow -\infty$.  Now suppose that $\b > 0$. Then we can choose $t_k\in \R, k = 1, 2, 3, \dots $ so that 
 $$
 e^{\b t_k} v_3(k) \rightarrow \pm 1.
 $$
 Then $e^{t_k} \rightarrow 0$ and so 
 $$
 e^{ t_k B } v(k) \rightarrow [\pm 1, 0, \pm 1].
 $$
 Similarly, if $\alpha > 0$, then we can choose $s_k \in \R,  k = 1, 2, 3, \dots $ so that 
 $$
e^{ s_k A} v(k) \rightarrow [0,\pm 1, \pm 1].
 $$
 This completes the proof. 
\end{proof}

Having obtained the preceding lemma, the characterization of the cases admitting open compact conull subsets of the dual orbit space is complete. Furthermore, we already obtained a topological section for the case (a), and the finite open orbit cases (c) through (e) are also taken care of. Hence it remains to take a closer look at the existence of quasi-sections for case (b). 

If $\alpha = 0$, one easily verifies that the set $\Sigma = \{ v \in \mathbb{R}^3: v_1 = 1, v_2^2+v_3^2 = 1 \}$ is a topological section; and the case $\beta = 0$ is similar. 

Hence it remains to consider the case $\alpha \beta \not= 0$; w.l.o.g. $\alpha>0$. Our assumptions imply that
\[
 \eO_2 = \{ v \in \mathbb{R}^3~:~ |v_i| \not= 0 \mbox{ for at least two indices } i \}~.
\] Hence, letting for $i =1,2,3$ and $\rho > 1$ 
\[
 C_i(\varrho) = \{ v \in \mathbb{R}^3: |v_i| \le \varrho~, \forall j \not=i: ~\frac{1}{\varrho} \le |v_j| \le \varrho \}~,
\] defines a compact set $C_i(\varrho) \subset \eO_2$. It is therefore sufficient, in view of Lemma \ref{lem:quasi_sect}, to prove that $((C_i(\varrho), C_k(\varrho)))$ is not relatively compact, for some pair $(i,k)$ of indices. 

First assume that $\beta>0$, and consider $((C_1(\varrho), C_2(\varrho)))$. We then need to determine the set of pairs $(s,t) \in \mathbb{R}^2$ such that, with suitable $v \in \mathbb{R}^3$, the following inequalities hold:
\[
\begin{array}{ccc}|v_1| \le \varrho~, & ~\frac{1}{\varrho} \le |v_2| \le \varrho~,~ & ~\frac{1}{\varrho} \le |v_3| \le \varrho \\
\frac{1}{\varrho} \le e^s |v_1| \le \varrho ~, & ~ e^t|v_2| \le \varrho~,~ & ~\frac{1}{\varrho} \le e^{\alpha s + \beta t}|v_3| \le \varrho~.
\end{array}
\]
These inequalities are solvable with suitable $v$ iff 
\[
 -2 \log(\varrho) \le s~,~ t \le 2 \log(\varrho) ~,~ -2 \log(\varrho) \le \alpha s + \beta t \le 2 \log(\varrho)~. 
\]
Given any $s \ge 0$, we may pick $t = - \alpha s/\beta <0 \le 2 \log(\varrho)$ so that $\alpha s + \beta t = 0$. Hence all three inequalities are satisfied, which means that there exists $v \in C_1(\varrho)$ such that $\exp(sA + tB) v \in C_2(\varrho)$, i.e. $\exp(sA + tB) \in ((C_1(\varrho), C2(\varrho)))$. In particular, $((C_1(\varrho), C2(\varrho)))$ is not relatively compact. 

For the case $\beta<0$, we first note that a similar reasoning as above leads to the following necessary and sufficient conditions for $\exp(sA +tB)$ to be in $((C_2(\varrho), C_3(\varrho)))$:
\[
-2 \log(\varrho) \le s \le 2 \log(\varrho)~,~-2 \log(\varrho) \le  t  ~,~ \alpha s + \beta t \le 2 \log(\varrho)~. 
\] This can be fulfilled by picking $s=0$ and $t \ge 0$ arbitrary, showing that $((C_2(\varrho), C_3(\varrho)))$ is not relatively compact. 

Again, Theorem \ref{thm:main} applies to yield non-integrability whenever $\mathcal{O}_{2}/H^T$ contains no nonempty open compact set. 
\end{proof}

\section{Combining diagonal and nilpotent generators}

In this section we consider a particular class of abelian dilation groups. Recall that a one-parameter group infinitesimally generated by a diagonal matrix with strictly positive entries allows integrable admissible vectors. In this section we investigate what happens when we include a second infinitesimal generator, assumed to be nilpotent.  The following result shows that this destroys integrability when
$n>2$.

\begin{prop}\label{not compact} Let $H = \exp \h$, where $\h$ is the real span of two non-zero commuting endomorphisms of $\R^n$, one of which is nilpotent, and the other diagonalizable with real eigenvalues. Suppose that $n \ge 3$, and let $U$ be an $H$-invariant, open, conull subset of $\eO_2$. Then $U / H^T$ is not compact. It follows that there are no integrable admissible vectors for $H$.

\end{prop}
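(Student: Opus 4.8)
The plan is to exploit the structure imposed by having one diagonalizable generator $A$ with real eigenvalues and one nilpotent generator $X$. First I would set up coordinates: choose a basis in which $A$ is diagonal and $X$ is lower triangular, so that $X$ maps each eigenspace into eigenspaces with eigenvalues fixed by $A$. Since $X$ is nilpotent and nonzero, there is a nontrivial action; the key structural fact is that the eigenvalues of $A$ along the $X$-chain are forced to coincide (because $A$ and $X$ commute, $X$ intertwines eigenspaces of $A$ with the \emph{same} eigenvalue). This means that along any nontrivial $X$-chain, the diagonal scaling by $e^{sA}$ acts by a single common factor $e^{s\lambda}$, while the other coordinates scale by possibly different factors. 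My aim is to produce a sequence of points in $U$ whose $H^T$-orbits escape to infinity in $\mathcal{O}_2/H^T$, showing non-compactness via sequential compactness (which suffices by second countability, as noted after Lemma \ref{lem:comp_subset_os}).

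The core of the argument would mirror the non-compactness proofs in Case 1 of Proposition \ref{33}. Concretely, I would identify a two-dimensional ``layer'' $\Omega \subset \mathcal{O}_2$ on which a topological section $\Sigma$ can be written down explicitly, with the Plancherel measure class on $\Sigma$ equivalent to Lebesgue measure in some unbounded coordinate direction. Because $U$ is open, $H^T$-invariant, and conull, its intersection with $\Sigma$ is conull for Lebesgue measure on the section, so I can select a sequence $v(k) \in U \cap \Sigma$ whose free coordinate tends to $+\infty$. I would then argue that $(H^T v(k))$ has no convergent subsequence: assuming convergence $e^{s_j A + t_j X} v(j) \to v$, I would extract the asymptotics of $s_j, t_j$ forced by the convergence of the ``bounded'' coordinates, and then show these asymptotics drive the escaping coordinate to $\pm\infty$, contradicting convergence. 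The essential mechanism is that the nilpotent generator contributes polynomial (quadratic) growth in $t_j$ to the escaping coordinate, while the diagonal generator can only supply exponential decay $e^{s_j\lambda}$; when $n \ge 3$ these two effects cannot cancel uniformly, exactly as in the computations leading to (\ref{coord3}) and (\ref{coord3again}).

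The main obstacle I anticipate is handling the \emph{case distinctions} cleanly: the relative configuration of the nilpotent Jordan structure of $X$ and the eigenvalue pattern of $A$ can vary, and I must ensure the escaping-coordinate argument goes through in every configuration arising when $n \ge 3$. The hypothesis $n \ge 3$ is what guarantees there is ``room'' for a coordinate that both survives in $\mathcal{O}_2$ and is not pinned down by the section constraints, so that the polynomial-versus-exponential competition genuinely produces divergence; for $n=2$ the one-parameter diagonal result already allows integrability, so the argument must visibly use $n \ge 3$. I would organize the proof by reducing, via the generalized eigenspace decomposition and Lemma \ref{unipotent} (which lets me discard any vanishing root), to a normal form for the pair $(A,X)$ analogous to Case 1(a)--(c) above, and then treat each normal form by the sequence-escape technique. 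Once non-compactness of $U/H^T$ is established for every conull open $U$, the final claim follows: by Theorem \ref{thm:main}, integrability of $\pi$ would force the existence of a compact conull subset of $\mathcal{O}_d/H^T = \mathcal{O}_2/H^T$, and by Lemma \ref{lem:comp_subset_os} such a subset would make $U/H^T$ compact for a suitable conull open $U$, a contradiction. Hence no integrable admissible vectors exist.
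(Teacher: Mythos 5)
Your overall strategy coincides with the paper's: decompose $\R^n$ into eigenspaces of the diagonalizable generator $A$, discard a zero eigenvalue by projecting onto a quotient where the action is trivial, stratify $\eO_2$ into layers $\O_b$ with explicit affine topological sections, use conullity of $U$ to select a sequence on the section of the (conull, minimal) layer whose free coordinate escapes to infinity, and show that the corresponding sequence of orbits has no convergent subsequence. The final deduction of non-integrability from Theorem \ref{thm:main} is also the intended one.

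The gap is in the contradiction mechanism you commit to for the step where a subsequence of orbits is assumed to converge to a point of another layer. You propose to show that the asymptotics of $s_j,t_j$ ``drive the escaping coordinate to $\pm\infty$, exactly as in the computations leading to (\ref{coord3}) and (\ref{coord3again})''. Those $3\times 3$ computations depend on sign control: $z(j)$ is chosen with a definite sign matched to the sign of the leading quadratic term, so that the polynomial in $t_j$ cannot cancel $z(j)$. For general $n$ the escaping coordinate of $e^{s_kA+t_kX}v(k)$ has the form $e^{s_k}\bigl(z(k)+Q(t_k)\bigr)$ with $Q$ a fixed polynomial of possibly high degree and uncontrolled coefficient signs, and nothing prevents $Q(t_k)$ from cancelling $z(k)$; this coordinate alone yields no contradiction, and its only role in the paper is to force the limit orbit out of the minimal layer. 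The paper instead localizes the contradiction at the target layer: if the limit $v$ lies in $\S_b$ (Lemma \ref{section}) then $p_b(v)=0$ while $p_b(Xv)=\pm1$, and since $p_b(Xe^{tX}v(k))$ is the $t$-derivative of the polynomial $t\mapsto p_b(e^{tX}v(k))$, whose coefficients are independent of $k$, the two limit conditions (\ref{b2 cond}) and (\ref{b3 cond}) force $e^{s_k}|t_k|^{N}$ to tend simultaneously to $+\infty$ and to $0$. (A separate, easier argument handles a target eigenspace with negative eigenvalue, where $e^{\lambda s_k}\to\infty$ forces $p_b(Xv)=0$; and the case where $\eO_2$ is a single layer is settled by Corollary \ref{t2layer}, one of the places where $n\ge 3$ is genuinely used to make the section components unbounded.) You would need to replace your escaping-coordinate step by an argument of this kind; as stated, it would fail for general Jordan configurations.
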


Observe that if $n = 2$ and $H$ is as in the proposition, then $\h $ is the real span of $X$ and $A$ where
$$
X = \left[\begin{matrix} 0&\\1&0\end{matrix}\right], \ \ \ A =  \left[\begin{matrix} 1&\\ &1\end{matrix}\right].
 $$
In this case $\eO_2 = \R^2 \setminus \{ v : v_1 = 0\} $ is the union of two open orbits, and $\eO_2 / H$ is compact.

Recall that $H$ acts on $\widehat{\R^n}$ by $h \cdot \xi = (h^T)^{-1}\xi$, when $\widehat{\R^n}$ is identified with $\R^n$. We make this identification and describe $\h$ in terms of the transposes of its elements. 
Fix $A \in \h$ diagonalizable with real eigenvalues, and write $\R^n = \oplus_{\l \in\mathcal E} W_\l$ where $\mathcal E$  is the set of distinct eigenvalues of $A$, and $W_\l$ is the eigenspace for the eigenvalue $\l$.

Given an eigenspace $W$, put $\tilde W = \oplus \{W_\l : W_\l \ne W\}$, and let $v^W$ be the projection of $v \in \R^n$.
Fix a non-zero nilpotent element $X \in \h^T$. 
Each $W_\l$ is $X$-invariant, and we choose a basis $\{e_1^{(\l)}, \dots , e_{n_\l}^{(\l)}\}$ for $W_\l$ consisting of $\l$-eigenvectors of $A$, and for which the restriction of $X$ to $W_\l$ is of the form
$$
X|_{W_\l} = \left[\begin{matrix} 0&&&\cdots &0 \\
\epsilon_2&0&&\cdots &0 \\
&\epsilon_3&0&\cdots &0 \\
&&\ddots\ddots &&\vdots\\
&&&\epsilon_{n_\l}& 0 \end{matrix}\right]
$$
and $\epsilon_i = 0$ or $1$. For each $\l \in \mathcal E$ and $1 \le i \le n_\l$ let $p^{(\l)}_i : \R^n \rightarrow \R$  denote the coordinate function dual to the basis element $e^{(\l)}_i$. When $W = W_\l$ is fixed we denote $ p^{(\l)}_i$ by $p_i$. 
The following shows that we can assume that all eigenvalues are non-zero.

\begin{lemma} Suppose that $A$ has 0 as an eigenvalue. Then $\R^n / H^T$ contains no nonempty compact open subset. 

\end{lemma}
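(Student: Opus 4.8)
The plan is to adapt the projection argument from the proof of Lemma \ref{unipotent}: I will exhibit an $H^T$-invariant hyperplane $U \subset \R^n$ on which the quotient action becomes trivial, so that a single coordinate function descends to a continuous open map from $\R^n/H^T$ onto $\R$, and then invoke the fact that $\R$ has no nonempty compact open subset. Since $0 \in \mathcal E$, the eigenspace $W_0 = \ker A$ is nonzero, and this will furnish the needed invariant direction.

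Concretely, I would use the adapted basis $e_1^{(0)}, \dots, e_{n_0}^{(0)}$ of $W_0$ for which $X|_{W_0}$ has the strictly lower-triangular shift form described above, and set
\[
 U = \operatorname{span}\bigl(e_2^{(0)}, \dots, e_{n_0}^{(0)}\bigr) \oplus \bigoplus_{\l \ne 0} W_\l.
\]
This $U$ has codimension one, and it is $H^T$-invariant: the summands $W_\l$ with $\l \ne 0$ are invariant, while on $W_0$ the space $\operatorname{span}(e_2^{(0)}, \dots, e_{n_0}^{(0)})$ is invariant under $X|_{W_0}$ (which lowers indices) and under $A|_{W_0} = 0$.

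The key step is to verify that the coordinate $p_1^{(0)}$ is constant on $H^T$-orbits. For $h = \exp(sA + tX)$ and $v = v_0 + v'$ with $v_0 \in W_0$ and $v' \in \bigoplus_{\l \ne 0} W_\l$, invariance of the summands gives $h v' \in U$, so $hv'$ does not contribute to the $e_1^{(0)}$-coordinate; and since $A|_{W_0} = 0$ we have $h v_0 = \exp(t\,X|_{W_0}) v_0$, where $\exp(t\,X|_{W_0})$ is lower unitriangular and hence preserves the top coordinate. Thus $p_1^{(0)}(h v) = p_1^{(0)}(v)$. Identifying $\R^n/U$ with $\R$ via $p_1^{(0)}$, the resulting projection $\pi : \R^n \to \R$ is continuous, open, surjective, and $H^T$-invariant, so it factors through the open continuous quotient map to yield a continuous open map $\overline{\pi} : \R^n/H^T \to \R$.

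To conclude, if $C \subset \R^n/H^T$ were a nonempty compact open subset, then $\overline{\pi}(C)$ would be a nonempty compact open subset of $\R$; but $\R$ is connected and noncompact, so its only compact open subset is empty, a contradiction. I expect the only point requiring genuine care — the true content beyond Lemma \ref{unipotent} — to be the verification that the top coordinate survives as an invariant once the nilpotent $X$ acts nontrivially on $W_0$; this rests entirely on $X|_{W_0}$ being a strict shift and on $A$ annihilating $W_0$, both guaranteed by the hypotheses.
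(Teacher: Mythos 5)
Your argument is correct and is essentially the paper's own proof: the paper projects onto $W_0/T_1$ with $T_1 = \mathrm{span}(e_2^{(0)},\dots,e_{n_0}^{(0)})$, which is exactly your quotient by $U$, shows the induced $H$-action on that one-dimensional quotient is trivial, and concludes from the fact that $\R$ has no nonempty compact open subset. (One cosmetic slip: with the paper's convention $X|_{W_0}$ is strictly lower triangular and \emph{raises} indices, not lowers them, but the conclusions you actually use --- invariance of the span and preservation of the top coordinate by the unitriangular exponential --- are the correct ones.)
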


\begin{proof} Let $T = W_0$ and $T_1 $ the span of $e_2^{(0)}, \dots , e_{n_0}^{(0)}$ in $T$. Let $\pi : \R^n \rightarrow T / T_1$ be the canonical map defined by $\pi(v) = v^{(\l)} + T_1$. Then $\pi$ is continuous and open. Now $T_1$ is $H$-invariant,  and the natural action of $H$ on $T / T_1$ defined by $\exp(sA + tX) \pi(v) = \pi(\exp (sA + tX)v) $ is trivial. Thus $\overline{\pi}(Hv) = H\pi(v) = \pi(v)$ is a well-defined map $\overline{\pi} : \R^n / H \rightarrow T/T_1$ that is continuous and open. Let $K \subset \R^n / H^T$ be compact and open. Then  $\overline{\pi}(K)$ is a compact and open subset of $T/T_1 \simeq \R$ and hence empty. Therefore $K$ is empty. 
\end{proof}

For the remainder of this section we assume that $0 \notin \mathcal E$. For each eigenspace $W$ let $\eO_2(W) = \{ v \in \R^n : \dim H v^W= 2\}.$ We have the following.

\begin{cor}  \label{2d} One has 
$$
 \eO_2 = \cup_{\l \in \mathcal E} \eO_2(W_\l).
 $$

\end{cor}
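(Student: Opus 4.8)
The plan is to translate both orbit-dimension conditions into linear algebra and then localize to the eigenspaces $W_\l$. Since the dual action is linear and $\h = \R A + \R X$, the tangent space to the orbit $Hw$ at a point $w$ equals $\{Yw : Y \in \h\} = \mathrm{span}\{Aw, Xw\}$, so $\dim Hw = \dim\mathrm{span}\{Aw, Xw\}$. Hence $\eO_2 = \{v : Av, Xv \text{ are linearly independent}\}$ and likewise $\eO_2(W_\l) = \{v : A v^{W_\l}, X v^{W_\l} \text{ are linearly independent}\}$. First I would record that the decomposition $\R^n = \oplus_{\l\in\mathcal E} W_\l$ is invariant under both $A$ and $X$: invariance under $A$ holds by definition of the $W_\l$, and invariance under $X$ follows because $X$ commutes with $A$ and therefore preserves each $\ker(A-\l I)$. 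Consequently $Av = \sum_\l \l\, v^{W_\l}$ and $Xv = \sum_\l X v^{W_\l}$, with the $\l$-th summands lying in $W_\l$.

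The crux is a one-block statement: for each $\l \in \mathcal E$ and each $v$,
\[ \{A v^{W_\l}, X v^{W_\l}\} \text{ is linearly independent} \iff X v^{W_\l}\neq 0. \]
Here both standing hypotheses on $\h$ enter decisively. The implication ``$\Leftarrow$'' is where the work lies: on $W_\l$ one has $A v^{W_\l} = \l\, v^{W_\l}$ with $\l \neq 0$ (as $0 \notin \mathcal E$), so independence of $\{\l\, v^{W_\l}, X v^{W_\l}\}$ is equivalent to $X v^{W_\l} \notin \R\, v^{W_\l}$; and if $X v^{W_\l} = c\, v^{W_\l}$ with $c \neq 0$, then $v^{W_\l}$ would be an eigenvector of the nilpotent operator $X|_{W_\l}$ for the nonzero eigenvalue $c$, which is impossible, forcing $X v^{W_\l} = 0$. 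This is the step I expect to be the main obstacle — not computationally, but because it is the only place where nilpotency of $X$ and non-vanishing of the eigenvalues are actually used, and it is precisely what prevents the components of $Av$ and $Xv$ from conspiring across distinct blocks.

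With the block statement in hand the two inclusions are short. For $\eO_2(W_\l) \subseteq \eO_2$: if $\{A v^{W_\l}, X v^{W_\l}\}$ is independent and $aAv + bXv = 0$, then projecting onto $W_\l$ gives $a A v^{W_\l} + b X v^{W_\l} = 0$, whence $a = b = 0$; thus $\{Av, Xv\}$ is independent and $v \in \eO_2$. For $\eO_2 \subseteq \cup_\l \eO_2(W_\l)$: if $v \in \eO_2$ then $Xv \neq 0$, and since $Xv = \sum_\l X v^{W_\l}$ has its summands in distinct $W_\l$, some $X v^{W_\l} \neq 0$; by the block statement $\{A v^{W_\l}, X v^{W_\l}\}$ is then independent, i.e. $v \in \eO_2(W_\l)$. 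Combining the two inclusions yields $\eO_2 = \cup_{\l \in \mathcal E} \eO_2(W_\l)$. In fact the argument establishes the cleaner identity $\eO_2 = \{v : Xv \neq 0\}$, whose only nontrivial inclusion $\{Xv \neq 0\} \subseteq \eO_2$ is exactly the block statement above.
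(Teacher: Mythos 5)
Your proposal is correct and follows essentially the same route as the paper: both arguments hinge on the observation that $v\in\eO_2$ forces $Xv\neq 0$, hence $Xv^{W_\l}\neq 0$ for some $\l$, and then on the fact that $\l\neq 0$ makes the orbit of $v^{W_\l}$ two-dimensional. The only difference is cosmetic — the paper verifies the one-block step by exhibiting two coordinates $p_{b-1},p_b$ on which the orbit map has rank $2$, whereas you argue coordinate-free via the fact that a nilpotent operator has no nonzero eigenvalue; you also spell out the easy inclusion $\eO_2(W_\l)\subseteq\eO_2$, which the paper leaves implicit.
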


\begin{proof} Let $v \in \eO_2$. Then necessarily $Xv \ne 0$. Hence for some eigenspace $W = W_\l$, $W \not\subset \ker X$.  Let $b = \min \{ 2 \le i \le n_\l : p_i(Xw)\ne 0\}$. Then 
\begin{equation} \label{action1}
p_{b-1}(e^{s A + tX} v )  = e^{\l s} p_{b-1}(v)
\end{equation}
and 
\begin{equation} \label{action2}
 p_b(e^{s A + tX} v) = e^{\l s} (p_b(v)  +t p_b(Xv).
\end{equation}
Since $\l \ne 0$ then $\dim H w = 2 $ and $v \in \eO_2(W)$.

\end{proof}

Fix an eigenspace $W = W_\l$  for which $W\not\subset \ker X$. 
Recall that we have chosen a basis for $W$ for which $X|_{W}$ has the form
$$
X|_{W} = \left[\begin{matrix} 0&&&\cdots &0 \\
\epsilon_2&0&&\cdots &0 \\
&\epsilon_3&0&\cdots &0 \\
&&\ddots\ddots &&\vdots\\
&&&\epsilon_{n_\l}& 0 \end{matrix}\right]
$$
and not all $\epsilon_i$ are zero. Note that $p_1(Xv) = 0$ and 
\begin{equation}\label{pi}
p_i(Xv) = \epsilon_i p_{i-1}(v), 2 \le i \le n_{\l}.
\end{equation}

A stratification of orbits in $\eO_2(W)$ can be defined as follows. For each $v \in \eO_2(W)$ put
$$
b({v^W}) = \min \{ 2 \le i \le n_\l : p_i(X v) \ne 0\} .
$$
For $2 \le b \le n_\l$ put $\O_b = \{ v \in \R^n : b({v^W})  = b\}$, and let $B = \{ 2 \le i \le n_\l : \epsilon_i \ne 0\}$. Then for each $1 \le b \le n_\l$, the relation (\ref{pi}) shows that $\O_b$ is non-empty if and only if $b \in B$, and the relations (\ref{action1}) and (\ref{action2}) show that $\O_b$ is included in $\eO_2(W)$. Thus we have 
 $$
 \eO_2(W) = \cup_{b \in B} \  \O_b.
 $$
If $v \in \O_b$, then $p_b(X^pv) = $ for all $p \ge 2$, and hence for any $t \in \R$, 
 $$
p_{i}(X e^{ tX} v) = 0 \text{ for all } i < b, \text{ and }  p_b(X e^{ tX} v) = p_b(Xv) \ne 0.
 $$
 Thus $\O_b$ is $e^{ \R X}$-invariant. A section for the $e^{ \R X}$-orbits in $\O_b$ is given as follows.

 \begin{lemma} The set 
$$
\L_b = \O_b \cap \{ v \in \R^n : p_b(v)= 0\}
$$
 is a topological section for the $e^{ \R X}$ orbits in $\O_b$.

 \end{lemma}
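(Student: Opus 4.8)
The plan is to exploit the affine dependence of the $b$-th coordinate along an $X$-orbit. Recall that for $v \in \O_b$ we have $p_b(X^p v) = 0$ for all $p \ge 2$, so that expanding the exponential series in the single coordinate $p_b$ collapses to the affine formula
$$ p_b(e^{tX} v) = p_b(v) + t\, p_b(Xv), \qquad t \in \R, $$
in which the slope $p_b(Xv)$ is nonzero by the very definition of $\O_b$. This identity drives the whole argument.

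First I would check that $\L_b$ meets each $e^{\R X}$-orbit in exactly one point. Given $v \in \O_b$, the condition $e^{tX}v \in \L_b$ is $p_b(e^{tX}v) = 0$, i.e. $p_b(v) + t\, p_b(Xv) = 0$, which, since $p_b(Xv) \neq 0$, has the unique solution $t = -p_b(v)/p_b(Xv)$. As $\O_b$ is $e^{\R X}$-invariant, this $e^{tX}v$ indeed lies in $\L_b$, so every orbit meets $\L_b$, and uniqueness of $t$ shows it does so exactly once. The same formula shows freeness on $\O_b$: if $e^{tX}v = v$ then $t\, p_b(Xv) = 0$, forcing $t = 0$.

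Next I would upgrade this set-theoretic section to a topological one by producing a continuous inverse explicitly. Consider the orbit map $\psi \colon \R \times \L_b \to \O_b$, $(t,v) \mapsto e^{tX}v$; it is continuous and, by the previous paragraph, bijective. Define $\tau(w) = p_b(w)/p_b(Xw)$ for $w \in \O_b$, which is continuous precisely because $p_b(Xw) \neq 0$ throughout $\O_b$. Then $e^{-\tau(w)X}w \in \L_b$ by construction, and the map $w \mapsto (\tau(w),\, e^{-\tau(w)X}w)$ is continuous and inverts $\psi$. Hence $\psi$ is a homeomorphism, which is exactly the assertion that $\L_b$ is a topological section for the $e^{\R X}$-orbits in $\O_b$.

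I expect no serious obstacle here; the only point requiring care is the continuity of the inverse, and this rests entirely on the nonvanishing of $p_b(Xw)$ on $\O_b$ together with the affine coordinate formula. The latter, in turn, is where the minimality in the definition of $b(v^W)$ does its work, killing all higher-order terms $p_b(X^p v)$ for $p \ge 2$ and thereby making both the section value and the inverse depend rationally — hence continuously — on $w$.
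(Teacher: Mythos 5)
Your proof is correct and follows essentially the same route as the paper's: both rest on the affine identity $p_b(e^{tX}v) = p_b(v) + t\,p_b(Xv)$ with nonvanishing slope on $\O_b$, and solve uniquely for $t = -p_b(v)/p_b(Xv)$. You are somewhat more explicit than the paper in verifying the \emph{topological} part by exhibiting the continuous inverse $w \mapsto (\tau(w), e^{-\tau(w)X}w)$ of the orbit map, which the paper leaves implicit in the continuity of the formula for $t$; this is a welcome bit of extra care rather than a different argument.
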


 \begin{proof} We have
 $$
p_b(e^{ tX} v) = p_b(v) + t p_b(Xv)= p_b(v) + t p_{b-1}(v)
$$
holds for all $v \in \O_b$. Let $v \in \O_b$, and define
 $$
 t = -\frac{ p_b(v)  }{ p_b(Xv)}. 
 $$
Then $e^{ tX} v \in \L_b$. If $v \in \L_b$ and also $e^{ tX} v \in \L_b$, then 
$$
0 = p_b(e^{ tX} v) = p_b(v)  + t p_b(Xv)= t \ p_b (Xv)
$$
implies $t = 0$ and $e^{ tX} v = v$. 

\end{proof}



Of particular interest is the``minimal layer" $\O_a$, where $a = \min B$. 
Note that $\O_a = \{ v \in \R^n : p_a(Xv) \ne 0\}$
is open and conull in $\R^n$.




\begin{lemma} \label{section} Let $b \in B$. Then $\O_b$ is $H$-invariant, and the set 
$$
\S_b = \{ v \in \L_b : p_b(Xv)  = \pm 1\}
$$
is a topological section for the $H$-orbits in $\O_b$.

\end{lemma}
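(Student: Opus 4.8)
The plan is to reuse the template of the preceding lemmas: I reduce the claim to solving, for each $v \in \O_b$, a triangular pair of scalar equations for the parameters $(s,t)$ of $H = \{e^{sA+tX} : (s,t)\in\R^2\}$, and I produce the solution by explicit, manifestly continuous formulas. Note first that $\dim H = 2$, since the nilpotent $X \ne 0$ and the diagonalizable $A \ne 0$ are linearly independent; moreover $e^{sA+tX} = e^{sA}e^{tX}$ because $A$ and $X$ commute. The two scalars controlling membership in $\S_b$ are $p_b(v)$ and $p_b(Xv)$, and the entire argument rests on how they transform under $H$.

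I would first record two transformation identities valid on $\O_b$. Since $e^{sA}$ acts on $W = W_\l$ as the scalar $e^{\l s}$ and commutes with $X$, and since $p_b(X^p v) = 0$ for all $p \ge 2$ when $v \in \O_b$, expanding $e^{tX}$ leaves only two terms and yields
$$
p_b\bigl(e^{sA+tX}v\bigr) = e^{\l s}\bigl(p_b(v) + t\,p_b(Xv)\bigr), \qquad p_b\bigl(X e^{sA+tX}v\bigr) = e^{\l s}\,p_b(Xv).
$$
$H$-invariance of $\O_b$ follows at once: it is already $e^{\R X}$-invariant by the discussion preceding the lemma, and $p_i(Xe^{sA}v) = e^{\l s}p_i(Xv)$ for every $i$ shows that $e^{sA}$ scales the coordinates $p_i(X\,\cdot\,)$ by the nonzero factor $e^{\l s}$, hence preserves their vanishing pattern and the value $b(v^W)$; combining the two and using $H = e^{\R A}e^{\R X}$ gives invariance.

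For the section property I treat existence and uniqueness. Given $v \in \O_b$ we have $p_b(Xv)\ne 0$ and $\l \ne 0$, so I set $s = -\l^{-1}\log|p_b(Xv)|$ and $t = -p_b(v)/p_b(Xv)$; the identities then give $p_b(Xe^{sA+tX}v) = \pm 1$ and $p_b(e^{sA+tX}v) = 0$, so $e^{sA+tX}v \in \S_b$ (it lies in $\O_b$ by invariance and in $\L_b$ by the second equation). For uniqueness, if both $v$ and $e^{sA+tX}v$ lie in $\S_b$, then $e^{\l s}p_b(Xv) = p_b(Xe^{sA+tX}v)$ has the same modulus as $p_b(Xv)=\pm1$; since $e^{\l s}>0$ this forces $e^{\l s}=1$, i.e. $s=0$, and then $0 = p_b(e^{tX}v) = t\,p_b(Xv)$ forces $t=0$. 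Hence each $H$-orbit meets $\S_b$ exactly once.

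It remains to upgrade this bijection to a homeomorphism. The map $\R^2 \times \S_b \to \O_b$, $(s,t,v)\mapsto e^{sA+tX}v$, is continuous and, by existence, uniqueness, and commutativity of $H$, bijective; its inverse is $w \mapsto \bigl(-s(w),-t(w),e^{s(w)A+t(w)X}w\bigr)$ with $s(w) = -\l^{-1}\log|p_b(Xw)|$ and $t(w) = -p_b(w)/p_b(Xw)$, which are continuous on $\O_b$ because $p_b(Xw)\ne 0$ there. The one point genuinely needing care — the analogue of the obstacle in the earlier lemmas — is that both normalizations are achieved by a single group element at an arbitrary layer $b$, not only the minimal layer: this is precisely what $p_b(X^p v)=0$ for $p\ge 2$ secures, by decoupling the system into one equation determining $s$ and one determining $t$. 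Once that is in place the computation is routine and continuity of the section map is immediate from the explicit formulas.
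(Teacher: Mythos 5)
Your proof is correct and follows essentially the same route as the paper's: establish the transformation identities $p_b(e^{sA+tX}v)=e^{\l s}(p_b(v)+t\,p_b(Xv))$ and $p_b(Xe^{sA+tX}v)=e^{\l s}p_b(Xv)$, deduce $H$-invariance from the preservation of the vanishing pattern, and solve the decoupled triangular system for $(s,t)$ explicitly to get existence, uniqueness, and continuity of the section map. If anything, yours is slightly more careful: you use $s=-\l^{-1}\log|p_b(Xv)|$ for general $\l\ne 0$ (the paper's formula implicitly assumes $\l=1$, a normalization it only imposes later in the proof of Proposition \ref{not compact}) and you spell out the homeomorphism property that the paper leaves implicit.
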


\begin{proof} Let $v \in \O_b$. For $b' \in B, $ if $b'< b$ then we have $p_{b'}(Xe^{sA}v) = e^{\l s} p_{b'}(Xv) = 0$, while $p_b(Xe^{sA} v) = e^{\l s} p_b(Xv) \ne 0$. Hence $e^{sA} v \in \O_b$ and $\O_b$ is $H$-invariant. To see that $\S_b$ is a section, choose $t \in \R $ as above so that $e^{tX} v \in \L_b$, and put  $s = - \ln |p_b(X e^{tX}  v)|$. Then $e^{sA + tX} v \in \L_b$ and 
$$
|p_b(Xe^{sA + tX}  v |= | e^{\l s} p_b(Xe^{tX} v)| = e^s | p_b(X e^{tX} v| = 1.
$$ 
so $e^{sA + tX} v \in \S_b$ and thus $H\S_b = \O_b$. If $v \in \S_b$ and $e^{sA + tX} v \in \S_b$ also, then the preceding calculations show that $t = s = 0$. 

\end{proof}

Note that by Corollary \ref{2d},  each $v \in \eO_2$ belongs to some layer $\O_b \subset \eO_2(W)$. 
On the other hand it is possible that $\eO_2$ coincides with a minimal layer $\O_a$:  in this case there is only one eigenspace $W$ that is not included in the kernel of $X$, and for the matrix of $X$ in $W$, only one subdiagonal entry $\epsilon_a $ is non-zero.  Applying Lemma \ref{section} we obtain the following.

\begin{cor} \label{t2layer} Suppose that $n \ge 3$ and $\eO_2 = \O_a$. Then $\eO_2/H^T$ does not contain a nonempty compact open subset. 

\end{cor}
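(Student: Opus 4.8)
The plan is to follow the template of Lemma~\ref{1layer}, exploiting the explicit topological section produced in Lemma~\ref{section}. Since $\eO_2 = \O_a$, Lemma~\ref{section} provides a topological section $\S_a \subset \O_a$, so the natural bijection $\sigma : \eO_2/H^T \to \S_a$ is a homeomorphism when $\S_a$ carries the relative topology inherited from $\R^n$. Consequently, it suffices to show that $\S_a$ contains no nonempty subset that is simultaneously compact and open; the corollary then follows by transporting such a set back through $\sigma$.

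The key computation is to identify $\S_a$ concretely. The hypothesis $\eO_2 = \O_a$ forces $B = \{a\}$, i.e. on the relevant eigenspace $W$ the matrix of $X$ has a single nonzero subdiagonal entry $\epsilon_a = 1$. By (\ref{pi}) we then have $p_a(Xv) = \epsilon_a\, p_{a-1}(v) = p_{a-1}(v)$, so the defining conditions $p_a(v) = 0$ and $p_a(Xv) = \pm 1$ of $\S_a$ reduce to
\[
 \S_a = \{ v \in \R^n : p_a(v) = 0,\ p_{a-1}(v) = \pm 1 \}.
\]
This is the disjoint union of the two affine subspaces $\{p_{a-1} = +1,\ p_a = 0\}$ and $\{p_{a-1} = -1,\ p_a = 0\}$, each of dimension $n - 2$. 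These are precisely the connected components of $\S_a$, and here is where the hypothesis $n \ge 3$ enters: since $n - 2 \ge 1$, each component is a positive-dimensional affine subspace and hence unbounded in $\R^n$.

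Finally I would run the clopen argument. Suppose $C \subset \eO_2/H^T$ is compact and open; then $\sigma(C)$ is a compact open subset of $\S_a$. In the relative topology from $\R^n$, compactness makes $\sigma(C)$ both bounded and closed, so $\sigma(C)$ is clopen. If $\sigma(C)$ were nonempty, then, being clopen, it would contain the entire connected component of any of its points; but the components computed above are unbounded, contradicting boundedness of $\sigma(C)$. Hence $\sigma(C) = \emptyset$ and therefore $C = \emptyset$. The only nonroutine step is the explicit determination of $\S_a$ together with the observation that $n \ge 3$ makes its components positive-dimensional (and thus unbounded); the remaining topological argument is identical to that of Lemma~\ref{1layer}.
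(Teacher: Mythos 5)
Your proof is correct and follows essentially the same route as the paper's: identify $\S_a$ via Lemma~\ref{section} and relation~(\ref{pi}) as two affine subspaces of dimension $n-2$, note these components are unbounded for $n\ge 3$, and run the clopen-component argument from Lemma~\ref{1layer} through the homeomorphism $\eO_2/H^T \cong \S_a$. No gaps.
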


\begin{proof} Since $\S_a$ is a topological section for $\eO_2/H^T$, then its components are homeomorphic with the components of $\eO_2 / H$, and any closed and open subset of $\eO_2 / H $ is homeomorphic with a closed and open subset of $\S_a$. Any non-empty closed and open subset of $\S_a$ must contain a component. Now since $\O_a$ is the minimal layer then the condition $p_a(Xv) = \pm 1$ implies that $v \in \O_a$. Thus by Lemma \ref{section} we have
$$
\S_a = \{ v \in V_a : p_a(Xv) = \pm 1\}
$$
and $\S_a$ is the union of two components. Since $n \ge 3$, then each component is an unbounded affine hyperplane in $V_a$. Hence any non-empty closed and open subset of $\eO_2/H^T$ is not compact. But $\eO_2/H^T$ is Hausdorff, so any compact open subset must be a closed and open subset that is bounded, hence must be empty. 

\end{proof}

We are now ready to prove the proposition.

\begin{proof} [Proof of Proposition \ref{not compact}] Let $U$ be an open $H$-invariant subset of $\eO_2$ that is conull for Lebesgue measure. Let $W$ be an eigenspace that is not included in $\ker X$ and $\O = \O_a$ be a minimal layer in $\eO_2(W)$ constructed above. Then $U \cap \O$ is open, conull, and $H$-invariant. If  $\eO_2 = \O$, then by Corollary \ref{t2layer} we are done, so we suppose that $\eO_2$ contains other layers $\O_b$. We scale the generator $A$ so that the eigenvalue for the eigenspace $W$ is 1.

Let $\S = \S_a$ be the topological section for $\O$, so that  $\O = \{ v \in \R^n : p_a(X v) \ne 0\}$ and $\S = \{ v \in V_a : p_a(Xv) = \pm 1, p_a(v) = 0\} $. Put $\S_+ = \{ v \in \S : p_a(Xv) = 1\}$; by the relation (\ref{pi}) we have 
$$
\S_+ = \{ v \in V_a : p_{a-1}(v) = 1, p_a(v) = 0\}.
$$
Put $q = \dim \tilde W$. We identify $\S_+$ with $\R^{a-2} \times \R^{n_1 - a } \times \R \times \R^q$, via the coordinate functions $p_1, \dots p_{a-2}, p_{a+1} \dots , p_{n_1-1}$ and $p_{n_1}$ for $W$, and the coordinate functions for all other $W' \ne W$. Denote the corresponding $n-2$ dimensional Lebesgue measure  $m_{a-2} \times m_{n_1-a-1} \times m_1 \times m_q$ on $\S_+$ by $m $. (If $a = 2$, then $m_{a-2}$ is point mass measure.)


Now $U \cap \O_+$ is conull in $\O_+$, where $\O_+ = H \S_+$, and the measure $m $ is equivalent with the push forward of Lebesgue measure on $\O_+ = \{ v \in \O : p_a(Xv) > 0\}$. Hence  $m( \S_+ \setminus U) = 0$. 
It follows that there is $x \in \R^{a-2}, y \in \R^{n_1-a-1}$, and $w \in \R^q$, such that 
$$
(\S_+ \cap U)^{(x,y, w)} = \{ z \in \R : (x,y,z, w) \in \S_+ \cap U\}
$$
is $m_1$-conull in $\R$. 
Hence there is a sequence $z(j) \in (\S \cap U)^{(x,y,  w)}$ such that $z(j) \rightarrow +\infty$. Define the sequence $v(j)$ in $\S_+$ using the coordinates indicated above by 
$$
v(j) = (x,y,z(j), w), \ j = 1, 2, 3, \dots.
$$
Note that all coordinates of $v(j)$ are fixed except $p_{n_1}(v(j)) = z(j)$, and the elements $x,y$ and $w$ determine all other coordinates of $v(j)$ except $p_{a-1}(v(j)) = 1, p_a(v(j)) = 0$. 
We claim that $Hv(j)$ has no convergent subsequence in $\eO_2 / H$.

Suppose the contrary, that for some $v \in \eO_2$, and for some subsequence $Hv(k)$ of $Hv(j)$,  $H v(k) \rightarrow H v$. Since $z(k) \rightarrow +\infty$ then $v(k)$ is not convergent in $\S$ and $\S$ is a topological section in $\O$, therefore  $Hv \notin \O/H^T$. By definition of the quotient topology on $\eO_2 / H^T$, we have $s(k) , t(k) \in \R, k = 1, 2, 3, \dots $ such that 
$$
v'(k) :=  e^{ s_kA + t_k X }v(k) \rightarrow v.
$$
Now $v \notin \O$ means that $p_a(Xv) = 0$ and so 
\begin{equation} \label{a cond}
p_a(X v'(k)) \rightarrow 0.
\end{equation}
Recall that $p_a(X e^{t_kX} v(k) = p_a(Xv(k)$ and $v(k) \in \S_a$, hence $|p_a(X  e^{t_kX}  v(k))| = 1$. Hence
$$
p_a(X v'(k))  = e^{s_k} \ p_a\bigl( Xe^{t_kX}  v(k)\bigr) = e^{s_k } p_a(Xv(k)) = \pm e^{s_k}
$$
and so (\ref{a cond}) implies
$$
e^{s_k }  \rightarrow 0.
$$
For some eigenspace $W = W_\l$ and layer $\O_b \subset \eO_2(W_\l)$, we have $v \in \O_b$; in fact we may assume that $v \in \S_b$. Hence we have
\begin{equation}\label{vcoord}
p_b(v) = 0, \ \ p_b(Xv) = \pm 1.
\end{equation}
Suppose that $\l< 0$. Then  $e^{\l s_k} \rightarrow \infty$, but 
$$
e^{\l s_k} p_2(Xv(k)) = p_2(Xv'(k))  \rightarrow p_2(Xv). 
$$
Hence $p_2(Xv(k)) = p_2(Xv) = 0$. Now if $b = 2$, then this would already the desired contradiction in light of (\ref{vcoord}). Otherwise, $p_3((X^p v(k)) = 0$ for all $k$ and $p \ge 1$, so now 
$$
e^{\l s_k}p_3(Xv(k)) = p_3(Xv'(k))  \rightarrow p_3(Xv)
$$
and if $b = 3$ we arrive at a contradiction. Continuing in this way we obtain $p_b(Xv) = 0$ while also $p_b(Xv) = \pm 1$, a contradiction.

Now suppose that $\l > 0$, so that $e^{\l s_k} \rightarrow 0$.  Since $p_b(v) = 0$ then
\begin{equation}\label{b0 cond}
p_b(v'(k)) \rightarrow 0
\end{equation}
and since $|p_b(Xv)| =  1$, then 
\begin{equation}\label{b1 cond}
|p_b(X v'(k))| \rightarrow 1.
\end{equation}
We show that the preceding relations are not compatible. First, rewrite  (\ref{b0 cond}),  (\ref{b1 cond})  as
\begin{equation} \label{b2 cond}
e^{\l s_k } \ p_b(e^{t_kX} \ v(k)) \rightarrow 0
\end{equation}
and 
\begin{equation}\label{b3 cond}
e^{\l s_k }  |p_b\bigl( Xe^{t_kX}  v(k)\bigr)| \rightarrow 1.
\end{equation}
To compute $p_b(e^{t_kX}  v(k))$ and $p_b\bigl(X  e^{t_kX}  v(k)\bigr)$, note that
for each $p\ge 1$,   $p_b(X^pv(k)) $ is independent of $k$, and put $c_p=p_b(X^pv(k))$. 
Then
$$
\begin{aligned}
p_b\bigl(X e^{t_kX}  v(k)\bigr) &= p_b \bigl( Xv(k)  +  t_k X^2 v(k) + \frac{1}{2!} t_k^2X^3v(k) + \cdots  \bigr)\\
& = c_1 +c_2 t_k+ \frac{c_3}{2! } t_k^2 + \cdots  
\end{aligned}
$$
and since $e^{s_k } \rightarrow 0$, then the relation (\ref{b3 cond}) shows that 
$$
\left|c_1 +c_2 t_k + \frac{c_3}{2! } t_k^2 + \cdots \right| 
 \longrightarrow \infty.
$$
This shows that not all $c_p$ are zero and $|t_k| \rightarrow +\infty$. 
Let $N = \max \{ p  : c_p \ne 0\}$; then $N\ge 1$ and we have
$$
e^{s_k} \left| c_1+ c_2 t_k + \cdots  +  \dfrac{c_{N}}{(N-1)!}  t_k^{N-1}\right| \longrightarrow 1
$$
so
$$
\begin{aligned}
e^{s_k }|t_k|^N &\left| \dfrac{c_1}{t_k^{N-1}}+ \dfrac{c_2}{ t_k^{N-2}} + \cdots  +  \dfrac{c_{N}}{(N-1)!} \right| \\
&= |t_k| \ e^{s_k } \left| c_1+ c_2 t_k + \cdots  +  \dfrac{c_{N}}{(N-1)!}  t_k^{N-1}\right| \longrightarrow +\infty
\end{aligned}
$$
hence
$$
e^{s_k }|t_k|^N \longrightarrow +\infty.
$$
Similarly we compute that
$$
p_b\bigl(e^{t_kX}  v(k)\bigr) = 
c_0 + c_1 t_k+ \frac{c_2}{2! } t_k^2 + \cdots +  \dfrac{c_N}{N!}  t_k^{N}.
$$
But now the relation (\ref{b2 cond}) gives
$$
\begin{aligned}
e^{s_k}|t_k|^N \left| \dfrac{c_0}{t_k^N}+ \dfrac{c_1}{ t_k^{N-1}} + \cdots  +  \dfrac{c_{N}}{N!}\right| 
&= e^{s_k } \left| c_0 + c_1 t_k + \cdots  +  \dfrac{c_N}{N!}  t_k^{N}\right|  \\
&= e^{s_k} |p_b\bigl(e^{t_kX}  v(k)\bigr) |\longrightarrow 0
\end{aligned}
$$
and hence 
$$
e^{s_k }|t_k|^N \longrightarrow 0
$$
a contradiction.

\end{proof}

\section*{Acknowledgements}
Brad Currey would like to thank RWTH Aachen University for their hospitality during his visit. Hartmut F\"uhr would like to thank Dalhousie University for their hospitality, and Karlheinz Gr\"ochenig for interesting discussions
on the topic.

\end{document}